\theoremstyle{plain}
\newtheorem{thm}{Theorem}[section]
\newtheorem{cor}[thm]{Corollary} 
\newtheorem{lemma}[thm]{Lemma} 
\newtheorem{prop}[thm]{Proposition}
\newtheorem{conj}[thm]{Conjecture}
\theoremstyle{remark}
\newtheorem{remark}[thm]{Remark}
\theoremstyle{definition}
\newtheorem{defi}[thm]{Definition}
\newtheorem{example}[thm]{Example}
\newtheorem{notation}[thm]{Notation}
\newtheorem{gp}{Group}[subsubsection]
\newtheorem{matcases}{Cases}[subsection]
\def\today{{\number\day\space
 \ifcase\month\or
  January\or February\or March\or April\or May\or June\or
  July\or August\or September\or October\or November\or December\fi
 \space\number\year}}
\newcommand\Afr{{\mathfrak A}}
\newcommand\at{{\tilde a}}
\newcommand\bt{{\tilde b}}
\newcommand\Dih{\operatorname{Dih}}
\newcommand\Fb{{\mathbb F}}
\newcommand\id{{\operatorname{id}}}
\newcommand\Ints{{\mathbb Z}}
\newcommand\lspan{\mathrm{span}\,}
\newcommand\Nats{{\mathbb N}}
\newcommand\rank{\operatorname{rank}}
\newcommand\pit{{\tilde\pi}}
\newcommand\rt{{\tilde r}}
\newcommand\simc{\overset{c}{\sim}}
\newcommand\simpi{\overset{\pi}{\sim}}
\newcommand\simr{\overset{r}{\sim}}
\newcommand\st{{\tilde s}}
\newcommand\tor{{\operatorname{tor}}}
\newcommand\ttilde{{\tilde t}}
\newcommand\Tt{{\widetilde T}}
\newcommand\ULIE{\operatorname{ULIE}}
\newcommand\bZ{\mathbb Z}
\begin{document}

\title[Direct finiteness conjecture]{Finitely presented groups related to Kaplansky's Direct Finiteness Conjecture}

\author[Dykema]{Ken Dykema$^{*}$}
\address{Dykema, Department of Mathematics, Texas A\&M University,
College Station, TX 77843-3368, USA}
\email{kdykema@math.tamu.edu}
\thanks{\footnotesize ${}^{*}$Research supported in part by NSF grant DMS--0901220.}

\author[Heister]{Timo Heister$^{\dagger}$}
\address{Heister, Department of Mathematics, Texas A\&M University,
College Station, TX 77843-3368, USA}
\email{heister@math.tamu.edu}
\thanks{\footnotesize ${}^{\dagger}$This publication is based, in part, on the work supported by Award No.
KUS-C1-016-04, made by King Abdullah University of Science and Technology (KAUST)}

\author[Juschenko]{Kate Juschenko}
\address{Juschenko, Vanderbilt University, Department of Mathematics, 1326 Stevenson Center,
Nashville, Tennessee 37240}
\email{kate.juschenko@gmail.com}

\subjclass[2000]{20C07, (20E99)}

\keywords{Kaplansky's Direct Finiteness Conjecture, stable finiteness, Invertibles Conjecture,
sofic groups}

\date{June 25, 2012}

\begin{abstract}
We consider a family of finitely presented groups, called Universal Left Invertible Element (or ULIE) groups,
that are universal for existence of one--sided invertible elements in a group ring $K[G]$, where
$K$ is a field or a division ring.
We show that for testing Kaplansky's Direct Finiteness Conjecture, it suffices to test it on ULIE groups,
and we show that there is an infinite family of non-amenable ULIE groups.
We consider the Invertibles Conjecture 
and we show that it is equivalent to a question about ULIE groups.
We also show that for any group $G$, direct finiteness of $K[G\times H]$ for all finite groups $H$ implies stable finiteness
of $K[G]$.
Thus, truth of the Direct Finiteness Conjecture implies stable finiteness.
By calculating all the ULIE groups over the field $K=\Fb_2$ of two elements,
for ranks $(3,n)$, $n\le 11$ and $(5,5)$, we show that the Direct Finiteness Conjecture
and the Invertibles Conjecture (which implies the Zero Divisors Conjecture) hold for these ranks over $\Fb_2$.
\end{abstract}

\maketitle

\section{Introduction}

In the middle of the last century, Kaplansky showed (see~\cite{K}, p.\ 122) that for every field $K$ of characteristic
$0$ and every discrete group $\Gamma$, the group ring $K[\Gamma]$ (which is, actually, the $K$--algebra with
basis $G$ and multiplication determined by the group product on basis elements and the distributive law)
is
{\em directly finite},
namely, that for every $a,b\in K[\Gamma]$ the equation $ab=1$ implies
$ba=1$.
This is clearly equivalent to saying that all one--sided invertible elements in $K[G]$ are invertible.
However, the situation for fields of positive characteristic is
unresolved; the following conjecture of Kaplansky is still open:
\begin{conj}\label{conj:Kap}
For every discrete group $\Gamma$ and every field $K$,
the group ring $K[\Gamma]$ is directly finite.
\end{conj}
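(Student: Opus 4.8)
The plan is to combine two ingredients: the reduction announced above, which says that to prove Conjecture~\ref{conj:Kap} it suffices to prove direct finiteness of $K[G]$ for ULIE groups $G$; and the theorem of Elek and Szab\'o that the group algebra of a sofic group over an arbitrary field is stably finite, hence directly finite. If every ULIE group can be shown to be sofic, the conjecture follows.

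The first step is to make the reduction explicit and quantitative. Suppose $ab=1$ but $ba\neq 1$ in $K[\Gamma]$, with $a$ a $K$--combination of $m$ group elements and $b$ of $n$ group elements. Then one expects a ULIE group $G=G(m,n,K)$ with a finite presentation encoding exactly the relations that $ab=1$ forces on the relevant products of supports, a distinguished pair $a_0,b_0\in K[G]$ with $a_0b_0=1$ and $b_0a_0\neq 1$, and a homomorphism $G\to\Gamma$ carrying $a_0\mapsto a$ and $b_0\mapsto b$. Consequently Conjecture~\ref{conj:Kap} is equivalent to the assertion that $K[G(m,n,K)]$ is directly finite for all fields $K$ and all $m,n$. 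I regard this step as routine bookkeeping with the universal property of ULIE groups.

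The substantive step is to prove that every $G(m,n,K)$ is sofic. Soficity passes to subgroups, to inverse and direct limits, to extensions of a sofic group by an amenable group, to amalgamated free products and HNN extensions over amenable subgroups, and to graph products; and all amenable groups are sofic. So the strategy is to analyze the explicit finite presentations of the non-amenable ULIE groups from the infinite family mentioned above, and to exhibit each one as assembled from amenable or residually finite (or otherwise known-sofic) building blocks by such operations---for instance by locating a normal locally-residually-finite subgroup with residually finite quotient, or an isometric action on a tree giving a decomposition over amenable edge stabilizers. The amenable ULIE groups require nothing.

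The main obstacle is the obvious one: there is at present no general mechanism for certifying that an arbitrary finitely presented group is sofic, and whether all groups---or even all finitely presented groups---are sofic is a well-known open problem. Thus the proposal is really a structural program whose feasibility depends on understanding the groups $G(m,n,K)$ far more precisely than their defining presentations make apparent. A weaker fallback, sufficient for direct finiteness by itself, would be to construct for each $G(m,n,K)$ a faithful $\Ints$--valued rank function on $K[G(m,n,K)]$, or an embedding of $K[G(m,n,K)]$ into a ring carrying a faithful rank function (such as an ultraproduct of matrix rings over $K$), which already forces stable finiteness; but producing either for the non-amenable members of the family appears to demand the same structural input. Short of the full conjecture, the program has an unconditional shadow: by computing the groups $G(m,n,\Fb_2)$ explicitly one verifies Conjecture~\ref{conj:Kap} over $\Fb_2$ for small $(m,n)$, which is what the remainder of the paper carries out.
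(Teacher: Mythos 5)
The statement you were asked to prove is Conjecture~\ref{conj:Kap}, which the paper explicitly presents as an \emph{open} conjecture; there is no proof of it anywhere in the paper, so there is nothing to compare your argument against except the paper's partial results. Your proposal is honest about this: it is a program, not a proof. The reduction you describe in your first step is indeed carried out in the paper (Theorem~\ref{thm:KapULIE} and Corollary~\ref{cor:ULIEK}: Kaplansky's conjecture holds over $K$ if and only if $K[\Gamma]$ is directly finite for all ULIE$_K$ groups $\Gamma$), and the endgame you describe --- amenability or soficity of the ULIE groups plus the Elek--Szab\'o theorem --- is exactly how the paper obtains its unconditional results (Proposition~\ref{prop:DFCcases}, covering rank pairs $(3,n)$ with $n\le 11$ and $(5,5)$ over $\Fb_2$). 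So as a description of the paper's strategy your proposal is accurate.

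The gap is the one you yourself name, and it is not a technicality: there is no argument that the groups $\Gamma_\pi$ are all sofic, and the paper's Section~\ref{sec:InfFam} shows that non-amenable ULIE groups genuinely occur (an infinite family of them, sofic only because they happen to decompose as amalgamated free products over amenable subgroups). Since it is open whether all finitely presented groups are sofic, and the ULIE groups range over essentially arbitrary finite presentations built from cancellation patterns, the ``substantive step'' of your program is at least as hard as the open problems it is meant to circumvent. One smaller inaccuracy: you describe the universal group as coming with a distinguished pair satisfying $a_0b_0=1$ and $b_0a_0\ne 1$; the construction only guarantees $a_0b_0=1$, and whether $b_0a_0=1$ holds in the universal group is precisely the question --- if it did hold there, it would hold in every quotient and the conjecture would follow for that rank pair. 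In short: your proposal correctly reconstructs the paper's framework, but the statement remains a conjecture, and neither you nor the paper proves it.
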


We will call this Kaplansky's Direct Finiteness Conjecture, or simply the Direct Finiteness Conjecture (DFC).

Ara, O'Meara and Perera proved~\cite{AOP02} that the DFC holds (and also when $K$ is a division ring)
for residually amenable groups.
Elek and Szab\'o~\cite{ES04} generalized this result to a large class of groups,
namely the sofic groups, (also with $K$ a division ring, and they proved also stable finiteness --- see below).
Since currently there are no known examples of non-sofic groups, the Kaplansky DFC is even more intriguing.
Moreover it is well known that, in the case of finite fields,
Gottschalk's conjecture~\cite{Go73} implies Kaplansky's DFC (see~\cite{ES04} for a proof).

The notion of a sofic group was introduced by Gromov in~\cite{G99}
as a group with Cayley graph that satisfies a certain approximation property.
He showed that Gottschalk's conjecture is satisfied for sofic groups.
Many interesting properties are known about sofic groups.
The class of sofic groups is known to be closed under taking 
direct products, subgroups, inverse limits, direct limits, free products, and extensions by amenable groups (by~\cite{ES06})
and under taking free products with amalgamation over amenable groups (see~\cite{CD}, \cite{ES11} and \cite{P11}).

In this paper, we describe finitely presented groups that are universal for existence of one--sided
invertible elements in a group algebra.
To test Kaplansky's DFC, it will be enough to test it on these universal groups.
In fact, this idea, at least in the case of the field of two elements,
has been around in discussions among several mathematicians for some time.
See for example the MathOverflow posting~\cite{T2} of Andreas Thom, or Roman Mikhailov's preprint~\cite{M}.
Who was the first to describe these groups is unclear to the authors, and we believe
that these groups may have been rediscovered by several persons at different times.
After we posted an earlier version of this paper
(which lacked sections~\ref{sec:alg} and~\ref{sec:calcs} and is still available on the arXiv), a paper of
Pascal Schweitzer~\cite{Schw} about similar calculations for the Zero Divisors Conjecture appeared.
These efforts were independent of each other.

To illustrate, let us work over the field $K=\Fb_2$ of two elements.
If $a,b\in\Fb_2[G]$ and $ab=1$, then we may write 
\begin{equation}\label{eq:F2ab}
a=a_0+a_1+\cdots+a_{m-1}\text{ and }b=b_0+b_1+\cdots+b_{n-1}
\end{equation}
for group elements $a_0,\ldots,a_{m-1}$ that are distinct and group elements $b_0,\ldots,b_{n-1}$ that
are distinct.
The identity $ab=1$ implies that $a_ib_j=1$ for some $i$ and $j$;
after renumbering, we may assume $i=j=0$, and then, replacing $a$ by $a_0^{-1}a$ and $b$ by $bb_0^{-1}$,
we may assume $a_0=b_0=1$.
Now distributing the product $ab$ we get
\[
\sum_{i=0}^{m-1}\sum_{j=0}^{n-1}a_ib_j=1
\]
and, thus, there is a partition $\pi$ of $\{0,1,\ldots,m-1\}\times\{0,1,\ldots,n-1\}$ with one singleton set
$\{(0,0)\}$ and all other sets containing two elements, such that if $(i,j)\simpi(k,\ell)$ (i.e., if $(i,j)$ and $(k,\ell)$
belong to the same set of $\pi$), then $a_ib_j=a_kb_\ell$.
Consider the finitely presented group
\begin{equation}
\Gamma_\pi=\langle a_0,a_1,\ldots,a_{m-1},b_0,b_1,\ldots,b_{n-1}
\mid a_0=b_0=1,\,(a_ib_j=a_kb_\ell)_{\{(i,j),(k,\ell)\}\in\pi}\rangle,  
\end{equation}
where the relations are indexed over all pairs $\{(i,j),(k,\ell)\}$ of the partition $\pi$.
Then there is a group homomorphism $\Gamma_\pi\to G$ sending the given generators of $\Gamma_\pi$ to their
namesakes.
Furthermore, the corresponding elements $a$ and $b$ in $\Fb_2[\Gamma_\pi]$, defined by equation~\eqref{eq:F2ab},
satisfy also $ab=1$.
If $ba=1$ holds in $\Fb_2[\Gamma_\pi]$, then it holds in $\Fb_2[G]$ as well.
Therefore, to test Kaplansky's Direct Finiteness Conjecture
over $\Fb_2$, it will suffice to test it on the groups $\Gamma_\pi$.

We call these groups (and their analogues for more general $K$) ULIE groups, short for Universal Left Invertible Element groups.
In this paper, we will show that studying the ULIE will be enough to answer Kaplansky's Direct Finiteness
Conjecture, and we will prove a few facts about them, including that there is an infinite family of
non-amenable ULIE groups.
With the aid of computers,
we have found
all ULIE groups (for the field $\Fb_2$) up to sizes $3\times 11$ and $5\times 5$ and used soficity results
to obtain partial confirmation of Kaplansky's DFC over $\Fb_2$.

Throughout the paper,
if $K$ is said to be a division ring, then it may also be a field, and will be assumed to be nonzero.
We let $1$ denote the identity element of a group $G$, or
the multiplicative identity of a division ring $K$ or of a group ring $K[G]$,
depending on the context.

We would like to mention two other well known conjectures about group rings.
Let us call the following the {\em Invertibles Conjecture} (IC).
See Conjecture~2 of~\cite{V02} for a statement when $K$ is the complex numbers.
\begin{conj}\label{conj:IT}
If $K$ is a division ring and $G$ is a group and if $K[G]$ contains a one--sided
invertible element that is not of the form $kg$ for $g\in G$ and $k\in K$, then $G$ has torsion.
\end{conj}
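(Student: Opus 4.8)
The plan is not to prove Conjecture~\ref{conj:IT} outright, but to reduce it to an equivalent, purely group--theoretic question about ULIE groups, in the same spirit as the reduction of the Direct Finiteness Conjecture sketched above. First I would pass to the contrapositive: assuming $G$ is torsion free, I want to show that every one--sided invertible element of $K[G]$ has the form $kg$ with $k\in K$, $k\ne0$, and $g\in G$. A routine first step reduces to finitely generated $G$: if $ab=1$ in $K[G]$, only finitely many group elements occur in the supports of $a$ and $b$ and in the finitely many products needed to verify $ab=1$, so all of them lie in the subgroup $H\le G$ they generate, which is again torsion free, and it suffices to treat $K[H]$.

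The main tool is the universal construction. Carried out over a division ring $K$ exactly as in the introduction for $\Fb_2$ (allowing coefficients from $K$ in the relevant linear combinations), a one--sided invertible $a\in K[G]$ with one--sided inverse $b$ that is \emph{not} of the form $kg$ produces a ULIE group $\Gamma$, a universal element $\at\in K[\Gamma]$ built from the generators of $\Gamma$ the way $a$ is built from its terms, and a homomorphism $\phi\colon\Gamma\to G$ with $\phi(\at)=a$ and $\at$ one--sided invertible in $K[\Gamma]$; conversely, every such $\Gamma$ maps to an actual group carrying such an element. So what one must isolate is the right ULIE--group statement, and the delicate point is that $\phi$ need not be injective, so information about torsion in $\Gamma$ need not descend to $G$ directly. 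The equivalent statement therefore has to be formulated at the level of the groups $\Gamma$ themselves: for every admissible partition $\pi$ (or its analogue over $K$), either the defining relations already force enough coincidences among the generators that $\at$ collapses to a single term of the form $kg$ --- so the construction never yielded a genuine counterexample --- or $\Gamma_\pi$ has torsion in a manner compatible with every homomorphism to a target on which $\at$ remains non--trivial. Proving this equivalence is the first concrete theorem to aim for; proving the dichotomy itself is the conjecture.

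The main obstacle is exactly this dichotomy, and I do not expect a short argument: a one--sided version of the statement already contains, as a special case, a strengthening in the direction of Kaplansky's unit conjecture for torsion--free groups (a nontrivial two--sided unit in $K[G]$ is in particular a one--sided invertible not of the form $kg$), so no elementary proof is likely. The realistic line of attack is computational, in the spirit of the later sections of this paper: enumerate the partitions $\pi$ with small parameters $m$ and $n$, present and analyze the resulting groups $\Gamma_\pi$, and for each one decide whether it is torsion free while carrying a genuinely non--trivial universal element --- a single such $\pi$ would disprove the conjecture, whereas verifying the dichotomy for all $\pi$ up to every size would prove it. Without a uniform structural description of which finitely presented groups can occur as ULIE groups, this enumeration, together with the problem of recognizing when the universal element genuinely fails to be trivial in $K[\Gamma_\pi]$, is where I expect the real difficulty to lie, and it is the reason the statement remains open.
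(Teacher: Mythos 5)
The statement you were asked about is an open conjecture; the paper contains no proof of it, only a reformulation and partial verification, and your proposal correctly recognizes this and follows essentially the same route: reduce the Invertibles Conjecture to a statement about ULIE groups and then attack small ranks computationally, which is exactly what the paper does in Theorem~\ref{thm:InvTorULIE} and Proposition~\ref{prop:ICcases}. The one place where your sketch stays vague --- ``$\Gamma_\pi$ has torsion in a manner compatible with every homomorphism to a target on which the universal element remains non--trivial'' --- is made precise in the paper by the subgroup $N_\tor(\Gamma)$ of Definition~\ref{def:Ntor}: since any homomorphism to a torsion--free group kills $N_\tor(\Gamma)$ (Proposition~\ref{prop:Ntor}), the correct equivalent condition is that the quotient map $\phi\colon\Gamma\to\Gamma/N_\tor(\Gamma)$ identifies two of the distinguished generators $a_i$ (or two of the $b_j$), and it is this concrete criterion, not a general dichotomy, that the calculations of Section~\ref{sec:calcs} verify for ranks $(3,n)$, $n\le 11$, and $(5,5)$ over $\Fb_2$.
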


As is well known,
it implies the famous {\em Zero Divisors Conjecture} (ZDC):
\begin{conj}\label{conj:ZD}
If $K$ is a division ring and $G$ a group and if 
$K[G]$ contains
zero divisors, then $G$ has torsion.
\end{conj}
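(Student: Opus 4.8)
The statement to be established is that the Invertibles Conjecture (Conjecture~\ref{conj:IT}) implies the Zero Divisors Conjecture (Conjecture~\ref{conj:ZD}). The plan is to assume Conjecture~\ref{conj:IT} and prove Conjecture~\ref{conj:ZD} in contrapositive form: if $K$ is a division ring and $G$ is torsion-free, then $K[G]$ has no zero divisors. So suppose instead that $a,b\in K[G]$ are nonzero with $ab=0$.

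The first step is the observation that $ab=0$ manufactures square-zero elements: for every $c\in K[G]$ we have $(bca)^2=bc(ab)ca=0$. Suppose for the moment that $c$ can be chosen so that $x:=bca\neq0$. Then $1-x$ is a unit, with inverse $1+x$, because $x^2=0$; in particular it is a one-sided invertible element. I claim it is not of the form $kg$ with $k\in K$ and $g\in G$. If it were, then $x=1-kg$ with $k\neq 0$ (since $x\neq0$); the case $g=1$ gives $x=(1-k)\cdot1$, and then $x^2=(1-k)^2\cdot1=0$ forces $1-k=0$, because a division ring has no nonzero nilpotents, so $x=0$, a contradiction; and if $g\neq1$, then $1$, $g$, $g^2$ are distinct (this is where torsion-freeness of $G$ enters), so the coefficient of the identity of $G$ in $x^2=1-2kg+k^2g^2$ equals $1\neq0$, again contradicting $x^2=0$. (Scalars commute with group elements in $K[G]$, so this computation is valid even for noncommutative $K$.) Thus $1-x$ is a one-sided invertible element not of the form $kg$, contradicting Conjecture~\ref{conj:IT} since $G$ is torsion-free.

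It remains to produce $c$ with $bca\neq0$. If $ba\neq0$, take $c=1$. The substantive case is $ba=0$, where one needs precisely that $bK[G]a\neq0$ for nonzero $a$ and $b$, i.e.\ that $K[G]$ is a prime ring. I would settle this by quoting the classical theorem of Connell (as presented, e.g., in Passman's book on group rings): for $K$ a division ring, $K[G]$ is prime if and only if $G$ has no nontrivial finite normal subgroup, and a torsion-free group has no nontrivial finite subgroup at all. I expect this to be the one real obstacle in the argument: the nilpotent trick with $ba$ is a one-liner, and checking that $1-x$ is non-monomial is a short computation resting on torsion-freeness, but excluding $bK[G]a=0$ requires the support and annihilator analysis behind Connell's theorem, which I would cite rather than reprove for what the paper calls a well known implication.
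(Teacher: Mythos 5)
Your proposal is correct and follows essentially the same route as the paper: use Connell's primality theorem for $K[G]$ over a torsion-free group to find $c$ with $x=bca\neq0$, note $x^2=0$ makes $1-x$ invertible, and conclude this contradicts the Invertibles Conjecture. You are in fact slightly more careful than the paper's terse argument (which only remarks $bca\notin K1$) in checking that $1-x$ cannot equal $kg$ for $g\neq1$, using torsion-freeness to see $1,g,g^2$ are distinct.
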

Though the proof is well known, it seems appropriate to describe it here.
We are indepted to a posting~\cite{T} by Andreas Thom on MathOverflow for the following argument.
\begin{proof}[Proof of (IC)$\implies$(ZDC)]
If ZDC fails, then there is a torsion free group $G$ and
there are nonzero $a,b\in K[G]$ so that $ab=0$.
Since $G$ is torsion free, a result of Connell~\cite{C63} (or see Thm.\ 2.10 of~\cite{Pa77})
implies that $K[G]$ is prime.
This entails that for nontrivial ideals $A$ and $B$, we cannot have $BA=0$.
By primality, there must be $c\in K[G]$
so that $bca\ne0$, and then $(bca)^2=0$;
we have $(1-bca)(1+bca)=1$ and, since $(bca)^2=0$, we have $bca\notin K1$ and $K[G]$ has one--sided invertible elements.
So IC fails.
\end{proof}

In Section~\ref{sec:Obs}, we introduce notation and make some preliminary observations about
the three conjectures mentioned above, including the well known fact that
the rank $2$ cases of all three hold.

Regarding Kaplansky's Direct Finiteness Conjecture, one can also ask for more:
one can ask for all matrix algebras $M_n(K[G])$ to be directly finite.
If this holds, the group ring $K[G]$ is said to be {\em stably finite}.
In Section~\ref{sec:StabFin}, we show that direct finiteness
of $K[G\times H]$ for all finite groups $H$ implies stable finiteness of $K[G]$.

In Section~\ref{sec:K}, we introduce ULIE groups and show that for solving Kaplansky's Direct Finiteness
Conjecture (or various subcases thereof), it is enough to consider ULIE groups
and we state that our calculations (described in Section~\ref{sec:calcs}) imply that the DFC holds for
ranks $(3,n)$ with $n\le 11$ and $(5,5)$.
In Section~\ref{sec:InfFam}, we exhibit an infinite family of non-amenable ULIE groups.

In Section~\ref{sec:InvTor}, we show that the Invertibles Conjecture can be reformulated
in terms of certain quotients of ULIE groups
and we state that our calculations (described in Section~\ref{sec:calcs}) imply that the Invertibles Conjecture holds for
ranks $(3,n)$ with $n\le 11$ and $(5,5)$.

In Section~\ref{sec:alg} we describe the algorithm we employed to list all the ULIE groups over the field
$\Fb_2$ of two elements
for given ranks
and in Section~\ref{sec:calcs} we report on the results of these calculations.

\medskip
\noindent
{\bf Acknowledgment.}
The authors thank Beno\^it Collins, Denis Osin, Andreas Thom and Alain Valette for helpful discussions.

\section{Notation and Preliminary observations}
\label{sec:Obs}

Let $K$ be a division ring (or field).
Consider a group $G$ and elements $a$ and $b$ in the group ring $K[G]$, satisfying $ab=1$.
Just to fix notation: we say that $b$ is a right inverse of $a$ and that $a$ is right invertible, and that
$a$ is a left inverse of $b$ and that $b$ is left invertible.
We suppose that not both $a$ and $b$ are supported on single elements of $G$, and then neither of them may be,
and we are interested in the question of whether $ba=1$ must then hold.
We may write
\begin{equation}\label{eq:a=b=}
a=r_0a_0+\cdots+r_{m-1}a_{m-1},\qquad b=s_0b_0+\cdots+s_{n-1}b_{n-1}
\end{equation}
for integers $m,n\ge2$, for nonzero elements $r_0,\ldots,r_{m-1},s_0,\ldots,s_{n-1}$ of $K$ and for
distinct elements $a_0,\ldots,a_{m-1}$ of $G$ and distinct elements $b_0,\ldots,b_{n-1}$ of $G$.
We then say that the {\em rank} of $a$ is $m$ and of $b$ is $n$,
and that the {\em support} of $a$ is $\{a_0,a_1,\ldots,a_{m-1}\}$ and of $b$ is $\{b_0,\ldots,b_{n-1}\}$.
We must have $a_ib_j=1$ for at least one pair $(i,j)$, and by renumbering, we may assume $a_0b_0=1$.
Replacing $a$ by $a_0^{-1}a$ and $b$ by $bb_0^{-1}$, we may assume $a_0=b_0=1$.
Replacing $a$ by $r_0^{-1}a$ and $b$ by $br_0$, we may also assume $r_0=1$.

The rest of this section is devoted to making some observations that include the well known fact that
the rank~$2$ cases of the three conjectures described in the introduction are true (over any division ring).
It seems convenient to collect the proofs here, and the related results, (Propositions~\ref{prop:EHab=1}
and~\ref{prop:ZDCH}) may be useful in future.

If $H$ is a subgroup of $G$, then $K[H]$ is naturally contained as a subalgebra in $K[G]$.
Let $E=E^G_H:K[G]\to K[H]$ be the idempotent, surjective linear mapping defined by
\[
E(g)=\begin{cases}g,&g\in H \\ 0,&g\notin H.\end{cases}
\]
Of course, $E$ satisfies the conditional expectation property,
namely, that $E(abc)=aE(b)c$ if $b\in K[G]$ and $a,c\in K[H]$.

\begin{lemma}\label{lem:supports}
If $a\in K[G]$ is of rank $\ge2$ and has a right (or, respectively, left) inverse,
then it has a right (respectively, left) inverse whose support lies in the subgroup of $G$ generated by
the support of $a$.
\end{lemma}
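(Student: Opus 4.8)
The plan is to exploit the conditional expectation $E = E^G_H$ where $H$ is the subgroup of $G$ generated by the support of $a$. Suppose $a$ has rank $\ge 2$ and $ab = 1$ for some $b \in K[G]$. The key idea is that $E$ is $K[H]$-bilinear and $a \in K[H]$, so applying $E$ to both sides of $ab = 1$ gives $a\,E(b) = E(ab) = E(1) = 1$, since $1 \in H$. Thus $E(b)$ is a right inverse of $a$, and by definition of $E$ its support is contained in $H$, which is exactly the subgroup generated by the support of $a$. This handles the right-inverse case.

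For the left-inverse case, suppose $ba = 1$ with $a$ of rank $\ge 2$. Again $a \in K[H]$ and $E$ is $K[H]$-bilinear, so $E(b)\,a = E(ba) = E(1) = 1$, giving a left inverse $E(b)$ of $a$ supported in $H$. So the same one-line argument works symmetrically.

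I would write this up in essentially the two displayed equalities above, perhaps first recording explicitly that $a = E(a)$ since $\mathrm{supp}(a) \subseteq H$, and then invoking the conditional expectation property $E(a b) = a E(b)$ (respectively $E(b a) = E(b) a$) stated just before the lemma. One should also note, for cleanliness, that the hypothesis ``rank $\ge 2$'' is not actually needed for the argument — it is presumably included only to make the statement align with the setting of interest — so I would either keep it as stated or remark that it is superfluous.

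The only point requiring the smallest bit of care is that $H$ is a \emph{group}, so that $1 \in H$ and $E(1) = 1$; this is immediate since $\mathrm{supp}(a)$ is a nonempty set whose generated subgroup always contains the identity. I do not anticipate a genuine obstacle here: the entire content is that applying $E$ to $ab=1$ produces the desired inverse, and the support condition is built into the definition of $E$.
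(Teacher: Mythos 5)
Your argument is correct and coincides with the paper's own proof: apply $E^G_H$ to $ab=1$ (respectively $ba=1$) and use the conditional expectation property to get $aE(b)=1$ (respectively $E(b)a=1$), with the support of $E(b)$ automatically inside $H$. Your side remarks (e.g.\ that the rank hypothesis is not needed for this step) are fine and do not change the substance.
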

\begin{proof}
Let $H$ be the subgroup generated by the support of $a$ and let $E=E^G_H$.
If $ab=1$ for $b\in K[G]$, then $1=E(ab)=aE(b)$.
Similarly, if $ba=1$, then $E(b)a=1$.
\end{proof}

It is now immediate that over a commutative field $K$, right invertible elements of rank $2$ must be invertible.
Moreover, as we see below, it is not hard to prove the same result also for division rings.
\begin{prop}\label{prop:rank2}
If $a\in K[G]$ is one--sided invertible and of rank $2$, then it is invertible.
Furthermore a rank $2$ element of $K[G]$ is invertible if and only if it is of the form
$a=sh(1-rg)$ for $s,r\in K\backslash\{0\}$ and
$g,h\in G$, where $g$ has finite order $n>1$ and $r^n\ne1$;
then we have
\begin{equation}\label{eq:a-1}
a^{-1}=(1-r^n)^{-1}(1+rg+r^2g^2+\cdots+r^{n-1}g^{n-1})h^{-1}s^{-1}.
\end{equation}
\end{prop}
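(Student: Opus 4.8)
The plan is to factor out a unit and reduce everything to a computation inside the group ring of a single cyclic subgroup. Every rank~$2$ element $a=r_0a_0+r_1a_1$ can be written as $a=r_0a_0\bigl(1+r_0^{-1}r_1\,a_0^{-1}a_1\bigr)$, using that group elements commute with scalars in $K[G]$; so, setting $s=r_0$, $h=a_0$, $g=a_0^{-1}a_1\ne1$ and $r=-r_0^{-1}r_1$, every rank~$2$ element has the form $a=sh(1-rg)$ with $s,r\in K\setminus\{0\}$, $g\in G\setminus\{1\}$, $h\in G$, and $sh$ a unit of $K[G]$. Hence $a$ is (one-sided) invertible if and only if $1-rg$ is, and by Lemma~\ref{lem:supports} any one-sided inverse of $1-rg$ may be taken supported on $H=\langle g\rangle$; so the entire question lives in $K[H]$, and it remains to decide when $1-rg$ is one-sided invertible there.

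Next I would split on the order of $g$. If $g$ has infinite order, then $K[H]\cong K[t,t^{-1}]$ with $t$ central over $K$. A short \emph{width} estimate --- the top and bottom coefficients of a product are products of the corresponding coefficients of the factors, hence nonzero in a division ring --- shows $K[H]$ is a domain whose units are precisely the monomials $kt^n$. In a domain, $ab=1$ forces $a(1-ba)=0$ and so $ba=1$; thus one-sided invertibles are units, but $1-rt$ is not a monomial, so it is not one-sided invertible. Therefore $g$ has finite order $n$, and $n>1$ since $g\ne1$; then $K[H]\cong K[t]/(t^n-1)$.

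The heart of the matter is the identity in $K[t]/(t^n-1)$
\[
(1-rt)\Bigl(\,\sum_{i=0}^{n-1}r^i t^i\Bigr)\;=\;1-r^n\;=\;\Bigl(\,\sum_{i=0}^{n-1}r^i t^i\Bigr)(1-rt),
\]
valid because $t$ commutes with $K$ (so the sums telescope) and $1-r^n\in K$, while $\sum_{i=0}^{n-1}r^i t^i\ne0$ (it has degree $n-1<n$ with nonzero coefficients). If $r^n\ne1$, then $1-r^n$ is a unit of $K$, so $1-rt$ is two-sided invertible with inverse $(1-r^n)^{-1}\sum_{i=0}^{n-1}r^i t^i$; transporting back through $a=sh(1-rg)$ proves $a$ invertible and yields formula~\eqref{eq:a-1}. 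If $r^n=1$, the identity exhibits $1-rt$ as both a left and a right zero divisor with nonzero witness $w=\sum_{i=0}^{n-1}r^i t^i$: were $(1-rt)c=1$, then $w=w(1-rt)c=0$, a contradiction, and symmetrically for a left inverse, so $1-rt$ is not one-sided invertible. Since every rank~$2$ element admits a representation $a=sh(1-rg)$, this shows $1-rg$ (equivalently $a$) is one-sided invertible exactly when $g$ has finite order $n>1$ with $r^n\ne1$, and in that case $a$ is invertible; this establishes the first assertion, the characterization, and formula~\eqref{eq:a-1} simultaneously.

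There is no real obstacle here; the only thing demanding attention is the noncommutativity of $K$. One must use throughout that $G$ commutes with $K$ in $K[G]$ while distinct scalars need not commute with each other, and observe that every scalar occurring --- $r$, its powers, and $(1-r^n)^{-1}$ --- lies in the commutative subfield of $K$ generated by $r$, which is what legitimizes the telescoping in the displayed identity and the free placement of $(1-r^n)^{-1}$ in \eqref{eq:a-1}.
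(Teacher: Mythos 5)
Your proof is correct, and it shares the paper's central reduction: factor $a=sh(1-rg)$ and invoke Lemma~\ref{lem:supports} to confine any one--sided inverse to $K[\langle g\rangle]$. Where you diverge is the endgame. The paper posits an unknown right inverse $c=s_0+s_1g+\cdots+s_{n-1}g^{n-1}$ and solves the coefficient equations coming from $ac=1$, extracting $g^n=1$, $r^n\ne1$ and the formula for $a^{-1}$ in one stroke (and then checks $ca=1$). You instead run a structural case analysis: infinite order of $g$ is excluded because $K[t,t^{-1}]$ over a division ring is a domain whose units are the monomials and one--sided invertibles in a domain are automatically two--sided; finite order is handled by the telescoping identity $(1-rt)\sum r^it^i=1-r^n=\bigl(\sum r^it^i\bigr)(1-rt)$, which yields the explicit inverse when $r^n\ne1$ and a two--sided zero--divisor witness (hence non--invertibility) when $r^n=1$. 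Your route is a bit longer but buys two things: it makes the infinite--order case fully explicit, whereas the paper's ansatz for $c$ in nonnegative powers of $g$ tacitly passes over elements of $K[\langle g\rangle]$ with negative exponents when $g$ has infinite order; and it isolates exactly which commutations in the noncommutative $K$ are being used (everything lives in the commutative division subring generated by $r$, and $g$ commutes with scalars). The paper's version, in exchange, is shorter and produces the inverse's coefficients directly from the recursion rather than verifying a guessed identity.
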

\begin{proof}
If $a$ has rank $2$, then it can be written in the form $sh(1-rg)$
for $s,r\in K\backslash\{0\}$ and $h,g\in G$, $g\ne1$.
It will suffice to consider $a=1-rg$.
If $a$ has a right inverse, then by Lemma~\ref{lem:supports} it has a right inverse $c$ whose 
support belongs to the group $H$ generated by $g$.
We may, thus, write
\[
c=s_0+s_1g+s_2g^2+\cdots+s_{n-1}g^{n-1},
\]
where $n\ge2$ is such that $1,g,\ldots,g^{n-1}$ are distinct and $s_j\in K$. 
Now multiplying out $ac=1$ and solving, we must have $g^n=1$, $r^n\ne1$ and
$c=(1-r^n)^{-1}(1+rg+r^2g^2+\cdots+r^{n-1}g^{n-1})$.
But in this case, we have $ca=1$, so $a$ is invertible.
The general form~\eqref{eq:a-1} of $a^{-1}$ follows immediately.
\end{proof}

Thus, the rank $2$ cases of the Direct Finiteness Conjecture and the Invertibles Conjecture
are trivially true.
This conditional expectation trick also gives us the following:
\begin{prop}\label{prop:EHab=1}
Suppose $a,b\in K[G]$, each of rank $\ge2$, satisfy $ab=1$.
Then there is a subgroup $H$ of $G$ such that letting $c=E^G_H(a)$ and $d=E^G_H(b)$, we have:
\begin{enumerate}[(i)]
\item $cd=1$,
\item the ranks of $c$ and $d$ are both $\ge2$,
\item the support of $c$ generates $H$,
\item the support of $d$ generates $H$.
\end{enumerate}
Furthermore, if we also have $dc=1$, then we must have $a=c$ and $b=d$.
\end{prop}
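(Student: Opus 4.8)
The plan is to build $H$ by a descent: alternately replace $a$ (then $b$, then $a$, and so on) by its image under the conditional expectation onto the subgroup generated by its current support. Precisely, set $K_0=G$, $x_0=a$, $y_0=b$, and inductively, given a subgroup $K_i\le G$ and elements $x_i=E^G_{K_i}(a)$, $y_i=E^G_{K_i}(b)$ with $x_iy_i=1$, define $K_{i+1}=\langle\operatorname{supp}(x_i)\rangle$ if $i$ is even and $K_{i+1}=\langle\operatorname{supp}(y_i)\rangle$ if $i$ is odd, and put $x_{i+1}=E^{K_i}_{K_{i+1}}(x_i)$, $y_{i+1}=E^{K_i}_{K_{i+1}}(y_i)$. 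At even steps $\operatorname{supp}(x_i)\subseteq K_{i+1}$, so $x_{i+1}=x_i$; at odd steps $y_{i+1}=y_i$. Using the conditional expectation identity $E(pwq)=pE(w)q$ (valid for $p,q$ in the smaller group algebra), with the unchanged factor pulled out, $x_{i+1}y_{i+1}=E^{K_i}_{K_{i+1}}(x_iy_i)=1$; and composing conditional expectations, $x_i=E^G_{K_i}(a)$, $y_i=E^G_{K_i}(b)$ at every stage. One routine check is that both ranks stay $\ge2$: if $x_{i+1}y_{i+1}=1$ and one of the factors had rank $\le1$, it would be a nonzero scalar times a group element, forcing the other factor to have rank $1$ too, contradicting the inductive hypothesis (at even steps $x_{i+1}=x_i$ has rank $\ge2$, at odd steps $y_{i+1}=y_i$ does).

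The heart of the argument is that the descending chain $K_0\supseteq K_1\supseteq\cdots$ must stabilize. Since $\operatorname{rank}(x_i)$ and $\operatorname{rank}(y_i)$ are non-increasing and bounded below by $2$, they are constant for all $i$ past some $M$. For such $i$, at an even step the equality $\operatorname{rank}(y_{i+1})=\operatorname{rank}(y_i)$ forces $\operatorname{supp}(y_i)\subseteq K_{i+1}$, hence $y_{i+1}=y_i$ and $\langle\operatorname{supp}(y_i)\rangle\subseteq K_{i+1}=\langle\operatorname{supp}(x_i)\rangle$; symmetrically at odd steps. Thus $(x_i,y_i)$ is constant for $i\ge M$, say equal to $(c,d)$, while the groups $K_{i+1}$ for $i\ge M$ alternate between $\langle\operatorname{supp}(c)\rangle$ and $\langle\operatorname{supp}(d)\rangle$; since they form a descending chain, these two groups must coincide. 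Taking $H$ to be their common value, we obtain $c=E^G_H(a)$, $d=E^G_H(b)$, $cd=1$, ranks $\ge2$, and $\operatorname{supp}(c)$ and $\operatorname{supp}(d)$ each generating $H$, which is precisely (i)--(iv). I expect this termination step --- passing from ``the ranks stabilize'' to ``the subgroups stabilize'' --- to be the main point requiring care.

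For the last assertion, suppose $dc=1$. I would push this relation backwards along the iteration, showing: if $y_{i+1}x_{i+1}=1$, then $y_ix_i=1$. At an even step $x_{i+1}=x_i$, so $x_i$ has left inverse $y_{i+1}$ and right inverse $y_i$; hence $x_i$ is a unit and $y_i=y_{i+1}=x_i^{-1}$, whence $y_ix_i=1$. At an odd step $y_{i+1}=y_i$, and the analogous reasoning applied to $y_i$ (with left and right inverses swapped) shows $y_i$ is a unit and $x_i=y_i^{-1}$, so again $y_ix_i=1$. Descending from $i=M$ to $i=0$ gives $y_0x_0=ba=1$, so $a$ and $b$ are mutually inverse units of $K[G]$. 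Now, writing $L=\langle\operatorname{supp}(a)\rangle$, applying $E^G_L$ to $ba=1$ gives $E^G_L(b)\,a=1$, and multiplying on the right by $b$ yields $E^G_L(b)=b$, i.e.\ $\operatorname{supp}(b)\subseteq L$; the symmetric argument (apply $E^G_{\langle\operatorname{supp}(b)\rangle}$ to $ab=1$ and multiply on the right by $a$) gives $\operatorname{supp}(a)\subseteq\langle\operatorname{supp}(b)\rangle$, so $\langle\operatorname{supp}(a)\rangle=\langle\operatorname{supp}(b)\rangle=L$. Hence the iteration is already constant from the start, $H=L$, and $c=E^G_H(a)=a$, $d=E^G_H(b)=b$, as required.
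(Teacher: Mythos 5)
Your proof is correct, and it rests on the same basic mechanism as the paper's: repeatedly applying the conditional expectation onto the subgroup generated by the support of one factor, using $E(xy)=xE(y)$ (or $E(x)y$) to preserve the relation $ab=1$. The organization, however, is genuinely different. The paper argues by induction on $\rank(a)+\rank(b)$, with the rank-$2$ case (Proposition~\ref{prop:rank2}) as base case and with the ``furthermore'' clause carried inside the induction hypothesis, where it is used to dispose of the branches in which a rank strictly drops. You instead unroll that recursion into an explicit alternating iteration $K_0\supseteq K_1\supseteq\cdots$, prove termination by noting that the two rank sequences are non-increasing and bounded below by $2$, and observe that once they stabilize no support element is ever discarded, so the two generated subgroups coincide; this needs neither the rank-$2$ proposition nor a base case. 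Your handling of the ``furthermore'' statement is also different and arguably cleaner: you propagate $y_{i+1}x_{i+1}=1\Rightarrow y_ix_i=1$ backwards along the chain (a left and a right inverse of the unchanged factor must agree), conclude $ba=1$ in $K[G]$, and then show directly from $ab=ba=1$ that $\operatorname{supp}(a)$ and $\operatorname{supp}(b)$ generate the same subgroup, whence the iteration never truncates anything and $c=a$, $d=b$. What the paper's induction buys is brevity; what your version buys is a self-contained argument in which the final claim is obtained globally rather than threaded through the recursion, at the cost of the slightly delicate stabilization step, which you have justified correctly.
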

\begin{proof}
We argue by induction on the sum of the ranks of $a$ and $b$.
For the initial step,
if $\rank(a)=\rank(b)=2$, (or, in fact, if either $\rank(a)=2$ or $\rank(b)=2$), then using
Proposition~\ref{prop:rank2} and taking $H$ to be the subgroup generated by the support of $a$,
the conclusion holds.
For the induction step, suppose $\rank(a)+\rank(b)>4$ and
let $H_1$ be the subgroup of $G$ generated by the support of $a$.
Letting $b^{(1)}=E^G_{H_1}(b)$, we have $ab^{(1)}=1$, so we must have $\rank(b^{(1)})\ge2$.
Of course, we have $a,b^{(1)}\in K[H_1]$.
If $b\ne b^{(1)}$, then the rank of $b^{(1)}$ is strictly smaller than the rank of $b$,
and we may apply the induction hypothesis to find a subgroup $H$ of $H_1$
so that $c=E^G_H(a)$ and $d=E^G_H(b^{(1)})=E^G_H(b)$
satisfy (i)--(iv)
and also such that $dc=1$ implies $c=a$ and $d=b^{(1)}$; in this last case, we have that $a$ is invertible,
and together with $ab=1=ab^{(1)}$ this yields $d=b^{(1)}=b$ (which is actually contrary to hypothesis).
Thus, we may suppose $b=b^{(1)}$, namely, that the support of $b$ is contained in $H_1$.
If the support of $b$ also generates $H_1$, then taking $H=H_1$, we are done.
Otherwise, letting $H_2$ be the subgroup generated by the support of $b$, we
have $H_2\subsetneq H_1$.
Therefore, letting  $a^{(1)}=E^G_{H_2}(a)$, this element $a^{(1)}$ 
must have rank strictly smaller than the rank of $a$.
But we still have $a^{(1)}b=1$, so $\rank(a^{(1)})\ge2$ and we may apply the induction hypothesis,
as above, to obtain a subgroup $H$ of $H_2$ so that  $c=E^G_H(a^{(1)})=E^G_H(a)$ and $d=E^G_H(b)$
satisfy (i)--(iv)
and also such that $dc=1$ implies $c=a^{(1)}$ and $d=b$; in this last case, we have that $b$ is invertible,
and together with $ab=1=a^{(1)}b$ this yields $c=a^{(1)}=a$ (which is actually contrary to hypothesis).
\end{proof}

\begin{cor}
The group ring $K[G]$ is directly finite if and only if whenever $a,b\in K[G]$ satisfy
$ab=1$ and that the supports of $a$ and of $b$, respectively, generate the same subgroup of $G$,
then we have $ba=1$.
\end{cor}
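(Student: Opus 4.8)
The plan is to read this off directly from Proposition~\ref{prop:EHab=1}. The forward implication is immediate and needs no argument: if $K[G]$ is directly finite, then $ab=1$ forces $ba=1$ for \emph{every} pair $a,b\in K[G]$, in particular for those pairs whose supports generate a common subgroup.

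For the reverse implication, assume the stated condition and suppose $a,b\in K[G]$ satisfy $ab=1$. First I would dispose of the degenerate case: if $\rank(a)=1$, say $a=rg$ with $r\in K\backslash\{0\}$ and $g\in G$, then $b=g^{-1}r^{-1}$ is its genuine two--sided inverse and $ba=1$ holds trivially; the same reasoning applies if $\rank(b)=1$. Hence we may assume that both $a$ and $b$ have rank $\ge2$, which puts us exactly in the setting of Proposition~\ref{prop:EHab=1}.

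Applying that proposition, we obtain a subgroup $H\le G$ such that, with $c=E^G_H(a)$ and $d=E^G_H(b)$, we have $cd=1$, both $c$ and $d$ have rank $\ge2$, and the support of $c$ and the support of $d$ each generate $H$. In particular the supports of $c$ and of $d$ generate the \emph{same} subgroup of $G$, so the hypothesis applies to the pair $(c,d)$ and gives $dc=1$. The final clause of Proposition~\ref{prop:EHab=1} then forces $c=a$ and $d=b$, so $ba=dc=1$, as wanted.

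I do not expect a genuine obstacle here, since all the substance is already contained in Proposition~\ref{prop:EHab=1}; the only two points deserving a moment's attention are the rank--$1$ reduction (needed so that the proposition is applicable at all) and the trivial but essential observation that ``the support of $c$ generates $H$ and the support of $d$ generates $H$'' is precisely the situation ``the supports of $c$ and of $d$ generate the same subgroup'' contemplated by the hypothesis.
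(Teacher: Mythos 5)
Your argument is correct and is exactly the intended deduction from Proposition~\ref{prop:EHab=1} (the paper leaves the corollary without a written proof precisely because it follows this way): after disposing of the rank--one case, the proposition supplies the pair $(c,d)$ with common generated subgroup $H$, the hypothesis gives $dc=1$, and the ``furthermore'' clause upgrades this to $a=c$, $b=d$, hence $ba=1$. Nothing to add.
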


\medskip
Let us now turn to zero divisors.
Again, we suppose $K$ is a division ring and $G$ is a group and $a,b\in K[G]$ are nonzero and
are written as in~\eqref{eq:a=b=}, and we are interested in the situation when $ab=0$, in which case
we say that $a$ is a left zero divisor and $b$ is a right zero divisor and both are zero divisors.
Of course, by replacing $a$ with $r_0^{-1}a_0^{-1}a$ and $b$ with $bb_0^{-1}s_0^{-1}$,
we may assume $a_0=b_0=1$ and $r_0=s_0=1$.
We will use the following easy fact:
\begin{lemma}\label{lem:ab=0subgroup}
Let $a\in K[G]$ be a left (respectively, right) zero divisor.
Then there is nonzero $b\in K[G]$ whose support lies in the subgroup generated by the support of $a$,
satisfying $ab=0$ (respectively, $ba=0$).
\end{lemma}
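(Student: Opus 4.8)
The plan is to mimic the conditional--expectation argument used for Lemma~\ref{lem:supports}, with one extra twist to handle the fact that the obvious candidate may vanish. Let $H$ be the subgroup of $G$ generated by the support of $a$, so that $a\in K[H]$, and let $E=E^G_H$ be the associated idempotent linear map. If $ab=0$ for some nonzero $b\in K[G]$, the temptation is to apply $E$ and use $E(ab)=aE(b)$; but $E(b)$ could well be zero, so this alone is not enough.

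To fix this, I would decompose $b$ according to the right cosets of $H$ in $G$. Choosing coset representatives $\{g_i\}$ so that $G=\bigsqcup_i Hg_i$, write $b=\sum_i b_i$ where $b_i$ is the part of $b$ supported on $Hg_i$. Since $a\in K[H]$, the product $ab_i$ is supported on $Hg_i$, so the identity $0=ab=\sum_i ab_i$, being a sum of elements supported on pairwise disjoint cosets, forces $ab_i=0$ for every $i$. Because $b\ne0$, there is some index $i$ with $b_i\ne0$; fix it. Then $b':=b_ig_i^{-1}$ is nonzero and is supported inside $Hg_ig_i^{-1}=H$, i.e.\ in the subgroup generated by the support of $a$, while $ab'=(ab_i)g_i^{-1}=0$. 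This proves the claim for left zero divisors.

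For the assertion about right zero divisors, I would run the symmetric argument: if $ba=0$ with $b\ne0$, decompose $b=\sum_i b_i$ along the left cosets $g_iH$, observe that each $b_ia$ is supported on $g_iH$ so that $b_ia=0$ for all $i$, pick $i$ with $b_i\ne0$, and set $b'=g_i^{-1}b_i$, which is nonzero, supported in $H$, and satisfies $b'a=g_i^{-1}(b_ia)=0$.

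There is really no hard step here; the only point requiring care is the observation that $E(b)$ may be zero, which is precisely why one passes to a coset on which $b$ has nonzero mass and translates it back into $H$. Everything else is the same bookkeeping with supports and cosets that underlies Lemma~\ref{lem:supports}.
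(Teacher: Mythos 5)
Your proof is correct and follows essentially the same route as the paper: the paper right-multiplies the annihilating element by a suitable group element so that its support meets $H$ and then applies the conditional expectation $E^G_H$, which is exactly your coset decomposition in slightly different clothing. The explicit decomposition you give (showing that every coset piece $b_i$ satisfies $ab_i=0$ before translating one nonzero piece back into $H$) is a mild elaboration of that same idea, not a different argument.
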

\begin{proof}
We treat the case of $a$ being a left zero divisor, the other case being similar.
We suppose there is nonzero $c\in K[G]$ such that $ac=0$.
Let $H$ be the subgroup generated by the support of $a$ and let $E=E^G_H$.
By right multiplying $c$ by an appropriate group element, we may without loss of generality suppose
that the support of $c$ contains at least one element of $H$, so $E(c)\ne0$.
Then $0=E(ac)=aE(c)$, and $E(c)\ne0$.
Taking $b=E(c)$ we are done.
(Note that the rank of $b$ must actually be at least $2$).
\end{proof}

\begin{prop}\label{prop:ZDCrank2}
If $a\in K[G]$ is a zero divisor of rank $2$, then $a=sh(1-rg)$ for some $h,g\in G$ with
$g$ having finite order $n>1$,
and for some $r,s\in K$ such that $r^n=1$.
Moreover, in this case we have $ab=ba=0$ for
\begin{equation*}
b=(1+rg+r^2g^2+\cdots+r^{n-1}g^{n-1})h^{-1}.
\end{equation*}
\end{prop}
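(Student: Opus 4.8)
The plan is to adapt the proof of Proposition~\ref{prop:rank2}, using Lemma~\ref{lem:ab=0subgroup} in place of Lemma~\ref{lem:supports}. First I would record that, since $a$ has rank $2$, it can be written as $a=sh(1-rg)$ with $s,r\in K\backslash\{0\}$, $h\in G$ and $g\in G\backslash\{1\}$, exactly as at the start of the proof of Proposition~\ref{prop:rank2}. Since $sh$ is a unit in $K[G]$, the element $a=sh(1-rg)$ is a zero divisor if and only if $1-rg$ is (with witnesses adjusted by the unit $sh$), so it suffices to analyze $a=1-rg$ and then transfer the conclusion back through the factorization. By Lemma~\ref{lem:ab=0subgroup} --- say $1-rg$ is a left zero divisor, the right case being symmetric --- there is a nonzero $b_0\in K[G]$, supported on the cyclic subgroup $\langle g\rangle$, with $(1-rg)b_0=0$.

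Next I would rule out the possibility that $g$ has infinite order. If $\langle g\rangle$ were infinite, write $b_0=\sum_{j=p}^{q}s_jg^j$ with $s_p,s_q\ne0$; since group elements commute with scalars in $K[G]$, the coefficient of $g^{q+1}$ in $(1-rg)b_0$ equals $-rs_q$, which is nonzero because $r\ne0$ and $K$ is a division ring, contradicting $(1-rg)b_0=0$. Hence $g$ has finite order $n$, and $n>1$ because $g\ne1$. I would then write $b_0=\sum_{j=0}^{n-1}s_jg^j$ and expand, using $g^n=1$: the coefficient of $g^j$ in $(1-rg)b_0$ is $s_j-rs_{j-1}$ (indices mod $n$), so vanishing of $(1-rg)b_0$ forces $s_j=rs_{j-1}$, hence $s_j=r^js_0$, and closing the cycle yields $s_0=r^ns_0$. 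As $b_0\ne0$ we have $s_0\ne0$, so $r^n=1$ since $K$ is a division ring; this also identifies the annihilator, up to a scalar, as $1+rg+\cdots+r^{n-1}g^{n-1}$.

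Finally, with $r^n=1$ and $g^n=1$ in hand, I would verify the explicit formula by the telescoping identities
\[
(1-rg)(1+rg+\cdots+r^{n-1}g^{n-1})=1-r^n g^n=0=(1+rg+\cdots+r^{n-1}g^{n-1})(1-rg),
\]
and then transfer back through $a=sh(1-rg)$ and the relation $h^{-1}sh=s$ (valid because group elements commute with scalars in $K[G]$) to obtain $ab=ba=0$ for $b=(1+rg+r^2g^2+\cdots+r^{n-1}g^{n-1})h^{-1}$. There is no serious obstacle here: the only step carrying real content is the exclusion of infinite order for $g$, and even that is a one-line leading-coefficient argument; the remainder is the same bookkeeping as in Proposition~\ref{prop:rank2}, needing only a little extra care since $K$ need not be commutative.
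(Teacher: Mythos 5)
Your proof follows the same route as the paper's: strip off the unit $sh$ to reduce to $a=1-rg$, invoke Lemma~\ref{lem:ab=0subgroup} to get a nonzero annihilator supported in $\langle g\rangle$, and read off from the coefficient recursion that $g$ has finite order $n$, that $r^n=1$, and that the annihilator is a scalar multiple of $1+rg+\cdots+r^{n-1}g^{n-1}$. Your leading--coefficient exclusion of infinite order and the recursion $s_j=r^js_0$ are exactly the details the paper compresses into ``writing out $ab=0$'', and they are carried out correctly, also over a noncommutative $K$.

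The one step that does not survive noncommutativity is the final transfer. From $h^{-1}sh=s$ you get $ba=(1+rg+\cdots+r^{n-1}g^{n-1})\,s\,(1-rg)$, and to make this vanish you must slide $s$ past the powers of $r$: the coefficient of $g^k$ in that product is $r^k s-r^{k-1}s r$ (indices mod $n$), so you need $rs=sr$. Over a field this is automatic, but for a genuine division ring the displayed $b$ can fail to left--annihilate $a$: with $K$ the quaternions, $h=1$, $g$ of order $4$, $r=i$, $s=j$, one gets $ab=0$ but $ba=2(j+kg-jg^{2}-kg^{3})\ne 0$. To be fair, this defect is inherited from the statement itself, and the paper's own proof only verifies the reduced case $a=1-rg$ without rechecking the ``moreover'' clause after reinstating $s$ and $h$. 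The repair is to take $b=(1+rg+\cdots+r^{n-1}g^{n-1})h^{-1}s^{-1}$, in line with the inverse formula~\eqref{eq:a-1} of Proposition~\ref{prop:rank2}; with that $b$ your telescoping identities give $ab=ba=0$ with no commutativity assumption, and the rest of your argument stands as written.
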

\begin{proof}
Suppose $a$ is a left zero divisor of rank $2$.
Replacing $a$ by $s^{-1}h^{-1}a$ for some $s\in K\backslash\{0\}$
and some $h\in G$, it will suffice to treat the case when $a=1-rg$ is 
a left zero divisor for some nontrivial $g\in G$ and some nonzero $r\in K$.
By Lemma~\ref{lem:ab=0subgroup}, we have $ab=0$ for some $b\in K[G]$ having support
in the cyclic subgroup generated by $g$, and, after right multiplying by an appropriate
rank--one element we may assume
\[
b=1+s_1g+s_2g^2+\cdots+s_{n-1}g^{n-1},
\]
where $n\ge2$ is such that $1,g,\ldots,g^{n-1}$ are distinct, and $s_j\in K$.
Now writing out $ab=0$, we get that $n$ is the order of $g$ and $r^n=1$
and $b=1+rg+r^2g^2+\cdots+r^{n-1}g^{n-1}$.

The case when $a$ is a right zero divisor of rank $2$ is treated similarly.
\end{proof}

Thus, the rank $2$ case of the zero divisor conjecture is also trivially true.
Furthermore, we have an analogue of Proposition~\ref{prop:EHab=1} for zero divisors
as well.
\begin{prop}\label{prop:ZDCH}
Suppose nonzero elements $a,b\in K[G]$ satisfy $ab=0$
and both contain $1$ (the identity element of $G$) in their supports.
Then there is a subgroup $H$ of $G$ such that letting $c=E^G_H(a)$ and $d=E^G_H(b)$, we have:
\begin{enumerate}[(i)]
\item $cd=0$,
\item $c\ne0$ and $d\ne0$,
\item the support of $c$ generates $H$,
\item the support of $d$ generates $H$.
\end{enumerate}
\end{prop}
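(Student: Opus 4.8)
The plan is to mimic the proof of Proposition~\ref{prop:EHab=1}, running an induction that successively shrinks the ambient group until the supports of both elements generate it. The conditional expectation trick is available in the zero-divisor setting via Lemma~\ref{lem:ab=0subgroup}, so the structure should go through; the differences are that rank $1$ is now allowed for the ``trivial'' base case (a single group element is a unit, not a zero divisor, so $ab=0$ forces each of $a,b$ to have rank $\ge 2$ automatically, but we do not need to track this) and that there is no uniqueness addendum.

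First I would fix the quantity on which to induct: the sum $\rank(a)+\rank(b)$. For the base case, I would like the hypotheses already to force the conclusion, and indeed Proposition~\ref{prop:ZDCrank2} handles the situation $\rank(a)=2$ (or $\rank(b)=2$): if $\rank(a)=2$, write $a=h(1-rg)$ with $g$ of order $n$ and $r^n=1$, take $H=\langle g\rangle$ (the subgroup generated by the support of $a$, after the normalization $a_0=1$ which gives $h=1$), set $c=E^G_H(a)=a$ and $d=E^G_H(b)$; then $cd=E^G_H(ab)=E^G_H(0)=0$, the support of $c$ generates $H$, and $d\ne 0$ because $b$ contains $1$ in its support, so $E^G_H(b)$ has $1$ in its support. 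The only point needing a word is (iv): the support of $d$ generates $H$. This may fail for an arbitrary $b$, so here I would instead replace $b$ at the outset by the witness produced by Lemma~\ref{lem:ab=0subgroup}, i.e.\ first reduce to the case where the support of $b$ already lies in $\langle\operatorname{supp}(a)\rangle$; then shrink further. Actually the cleanest route mirrors Proposition~\ref{prop:EHab=1} exactly.

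So for the induction step, assume $\rank(a)+\rank(b)>4$. Let $H_1=\langle\operatorname{supp}(a)\rangle$ and put $b^{(1)}=E^G_{H_1}(b)$. Since $b$ contains $1$ in its support, $b^{(1)}\ne 0$, and $ab^{(1)}=E^G_{H_1}(ab)=0$ as $a\in K[H_1]$. If $b^{(1)}\ne b$, then $\rank(b^{(1)})<\rank(b)$, so $a,b^{(1)}\in K[H_1]$ satisfy the hypotheses with strictly smaller rank sum (both still contain $1$ in their supports; $1\in\operatorname{supp}(a)$ by the normalization and $1\in\operatorname{supp}(b^{(1)})$ as noted), and the induction hypothesis inside $H_1$ produces $H\le H_1$ with $c=E^{H_1}_H(a)=E^G_H(a)$, $d=E^{H_1}_H(b^{(1)})=E^G_H(b)$ satisfying (i)--(iv). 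If instead $b^{(1)}=b$, then $\operatorname{supp}(b)\subseteq H_1$; if it also generates $H_1$ we take $H=H_1$ and are done, and otherwise we set $H_2=\langle\operatorname{supp}(b)\rangle\subsetneq H_1$, form $a^{(1)}=E^G_{H_2}(a)$, note $a^{(1)}\ne 0$ (it contains $1$), $a^{(1)}b=E^G_{H_2}(ab)=0$, and $\rank(a^{(1)})<\rank(a)$ since $\operatorname{supp}(a)$ is not contained in $H_2$ (else $H_1\subseteq H_2$); again apply the induction hypothesis inside $H_2$ to $a^{(1)},b$ to get the desired $H\le H_2$.

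The only genuine obstacle is the bookkeeping around condition (iv) — making sure that whichever branch we take, both of the final supports generate the chosen $H$ and not merely sit inside it; this is exactly why the argument alternates between shrinking to $\langle\operatorname{supp}(a)\rangle$ and to $\langle\operatorname{supp}(b)\rangle$, and why Proposition~\ref{prop:ZDCrank2} is the right base case (it delivers (iv) for free via the explicit cyclic form of a rank-$2$ zero divisor). Everything else — that $E^G_H$ is multiplicative on products landing in $K[H]$, that it is the identity on $K[H]$, and that it preserves membership of $1$ in the support — is immediate from the conditional expectation property and the normalization $a_0=b_0=1$. I do not expect to need the ``furthermore'' uniqueness clause of Proposition~\ref{prop:EHab=1} here, and it indeed has no analogue, since a zero divisor is never invertible and the collapsing-to-a-unit argument has no counterpart.
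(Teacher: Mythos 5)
Your proof is correct and follows essentially the same route as the paper's own: induction on $\rank(a)+\rank(b)$, the base case settled by Proposition~\ref{prop:ZDCrank2} with $H$ the subgroup generated by the support of the rank-two element, and an induction step alternating the compressions onto $\langle\operatorname{supp}(a)\rangle$ and $\langle\operatorname{supp}(b)\rangle$ (your case split $b^{(1)}=b$ versus $b^{(1)}\ne b$ is a harmless variant of the paper's split on whether the support of $b^{(1)}$ generates $H_1$). Do drop the momentary suggestion to replace $b$ by the witness of Lemma~\ref{lem:ab=0subgroup} — the conclusion concerns $d=E^G_H(b)$ for the given $b$, so $b$ may not be exchanged — but no repair is needed: as the computation in the proof of Proposition~\ref{prop:ZDCrank2} shows, every nonzero right annihilator of $1-rg$ lying in $K[\langle g\rangle]$ is a left scalar multiple of $1+rg+\cdots+r^{n-1}g^{n-1}$ and hence has support equal to all of $\langle g\rangle$, so (iv) holds automatically for $d=E^G_H(b)$ in the base case, exactly as your closing paragraph asserts.
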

\begin{proof}
We use induction on the sum of the ranks of $a$ and $b$.
Under the hypotheses, we must have $\rank(a),\rank(b)\ge2$.
For the initial step,
in the case when $\rank(a)=\rank(b)=2$, or, indeed, when either $\rank(a)$ or $\rank(b)$
equals $2$, the conclusion follows from Proposition~\ref{prop:ZDCrank2}, by letting $H$ be the group generated
by the support of $a$.
For the induction step, assume $\rank(a)+\rank(b)>4$ and
let $H_1$ be the subgroup generated by the support of $a$.
Then letting $b^{(1)}=E^G_{H_1}(b)$, we have $b^{(1)}\ne0$ and $0=E^G_{H_1}(ab)=ab^{(1)}$.
If the support of $b^{(1)}$ generates $H_1$, then letting $H=H_1$, we are done.
Otherwise, letting $H_2$ be the subgroup (of $H_1$)
generated by the support of $b^{(1)}$ and letting $a^{(1)}=E^G_{H_2}(a)$,
we have $0\ne a^{(1)}\ne a$, so $\rank(a^{(1)})<\rank(a)$, and $a^{(1)}b^{(1)}=0$.
Now the existence of $H$ follows from the induction hypothesis.
\end{proof}

\section{Stable Finiteness}
\label{sec:StabFin}

\begin{lemma}\label{lem:finGp}
Given a field $F$ and a positive integer $n$, there is a finite group $H$ such that
the group ring $F[H]$ has a subring isomorphic to $M_n(F)$.
\end{lemma}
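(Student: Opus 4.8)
The plan is to produce, for a well chosen finite group $H$, a single Wedderburn--Artin component of $F[H]$ that contains a copy of $M_n(F)$. First I record two reductions. For $n\ge 2$ no copy of $M_n(F)$ inside $F[H]$ can contain $1_{F[H]}$, since $F[H]$ has a one--dimensional quotient (the augmentation map) while $M_n(F)$ has none; so ``subring'' must be read in the sense not requiring the ambient identity, and that is what I will build. Also $M_n(F)$ is a subring of $M_N(F)$ for every $N\ge n$ via $X\mapsto X\oplus 0_{N-n}$, so it suffices to exhibit $H$ and $N\ge n$ with $M_N(F)$ isomorphic to a subring of $F[H]$.

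Next I would choose $H$ to be a Frobenius group. Fix an odd prime $d\ge n$ with $d\neq\operatorname{char}F$, and, by Dirichlet's theorem on primes in arithmetic progressions, a prime $\ell\equiv 1\pmod d$ with $\ell\neq\operatorname{char}F$; let $\Ints/d$ act on $\Ints/\ell$ through the unique subgroup of order $d$ of $\operatorname{Aut}(\Ints/\ell)\cong(\Ints/\ell)^{\times}$, and set $H=\Ints/\ell\rtimes\Ints/d$. Then $|H|=\ell d$ is prime to $\operatorname{char}F$, so $F[H]$ is semisimple by Maschke's theorem. Since $\Ints/d$ acts freely on the nonprincipal characters of the abelian normal subgroup $\Ints/\ell$, a short Clifford--theory computation over an algebraic closure $\overline F$ shows that $\operatorname{Ind}_{\Ints/\ell}^{H}\psi$ is irreducible of dimension $d$ for every nonprincipal $\psi\colon\Ints/\ell\to\overline F^{\times}$. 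Such a representation is monomial, hence of Schur index $1$, so the Wedderburn component $B$ of $F[H]$ it determines is isomorphic to $M_d(E)$, where $E\supseteq F$ is the finite extension generated by its character values. Then $M_n(F)\subseteq M_d(F)\subseteq M_d(E)\cong B\subseteq F[H]$, all maps the obvious ring embeddings, proving the lemma.

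The step I expect to need the most care is the assertion ``monomial $\Rightarrow$ Schur index $1$, so $B$ has matrix size exactly $d$'' when $\operatorname{char}F=p>0$: for $\operatorname{char}F=0$ it is classical, and for $p\nmid|H|$ it follows by reduction modulo $p$, which is well behaved in this coprime case, but it is the one ingredient I would want a precise reference for. It can be avoided: when $\operatorname{char}F=p>0$, redo the construction with $\ell$ taken to be a primitive prime divisor of $p^{d}-1$ (one exists by Zsygmondy's theorem, as $d\ge 3$ is an odd prime); then $p\bmod\ell$ generates the chosen order--$d$ subgroup of $(\Ints/\ell)^{\times}$, so the character of $\operatorname{Ind}_{\Ints/\ell}^{H}\psi$ is fixed by $x\mapsto x^{p}$ and thus $\Fb_p$--valued, whence the corresponding component of $\Fb_p[H]$ is $M_d(\Fb_p)$ (Schur indices over finite fields being trivial), so $B\cong M_d(F)$ and the lemma follows with no Schur--index input. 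For $\operatorname{char}F=0$ one may instead just take $H=S_{n+1}$ with its $n$--dimensional standard representation, whose component is visibly $M_n(F)$.
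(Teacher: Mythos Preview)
Your proposal ultimately succeeds through the alternative arguments, but the main line contains a genuine error and you have the difficulty located in the wrong characteristic. The assertion ``monomial $\Rightarrow$ Schur index $1$'' is false in general: the $2$-dimensional irreducible of $Q_8$ is induced from a faithful linear character of $\langle i\rangle\cong\Ints/4$, has rational-valued character, yet its Wedderburn component in $\mathbb{Q}[Q_8]$ is the rational quaternion algebra, so the Schur index is $2$. Thus your sentence ``for $\operatorname{char}F=0$ it is classical'' is simply wrong. Conversely, in characteristic $p>0$ with $p\nmid|H|$ the Schur index is \emph{automatically} $1$: the Wedderburn components of $\Fb_p[H]$ are matrix rings over finite division rings, hence over fields by Wedderburn's little theorem, and this persists after tensoring up to $F$. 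So your main Dirichlet-based construction already works in positive characteristic with no Zsygmondy needed, and the only genuine gap is in characteristic zero, which your $S_{n+1}$ alternative correctly fills (the standard representation being absolutely irreducible and realized over $\mathbb{Q}$). The pieces do assemble into a valid proof, but the exposition should be reorganized around the arguments that actually work.

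The paper takes a completely different and much more elementary route. It first handles $n=2$ by explicit computation: for $\operatorname{char}F\neq 3$ it writes down a $2$-dimensional representation of $S_3$, exhibits the idempotent $Q=\tfrac13(2-a-a^2)$, and checks directly that $Q\,F[S_3]\,Q\cong M_2(F)$; for $\operatorname{char}F>2$ it does the analogous thing with $\Dih_4$ and $Q=\tfrac12(1-c^2)$. Together these cover every characteristic. It then reaches arbitrary $n$ by iterating $F[H_1\times H_2]\cong F[H_1]\otimes_F F[H_2]$ and $M_a(F)\otimes_F M_b(F)\cong M_{ab}(F)$ to obtain $M_{2^k}(F)$ inside the group ring of a cartesian power, and finally takes a corner to get general $n$. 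No Wedderburn--Artin decomposition, no Clifford theory, no Dirichlet, no Zsygmondy, no rationality results for symmetric groups. Your approach makes the structural reason transparent and names a single group per case, but at the cost of substantially heavier prerequisites; the paper's argument can be verified with nothing beyond linear algebra.
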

\begin{proof}
We prove first the case $n=2$.
Let $p$ be the characteristic of the field $F$.
Consider the symmetric group $S_3=\langle a,b\mid a^3=b^2=1,\,bab=a^{-1}\rangle$.
Consider the representation $\pi$ of $S_3$ on $F^2$ given by 
\[
\pi(a)=\left(\begin{matrix}-1 & -1 \\1 & 0\end{matrix}\right),\qquad
\pi(b)=\left(\begin{matrix}0 & 1 \\1 & 0\end{matrix}\right),
\]
extended by linearity to a representation of $F[S_3]$.
We have
\[
\pi(a^2)=\left(\begin{matrix} 0& 1 \\-1 & -1\end{matrix}\right),\qquad
\pi(ab)=\left(\begin{matrix}-1 & -1 \\0 & 1\end{matrix}\right),\qquad
\pi(a^2b)=\left(\begin{matrix} 1 & 0 \\-1 & -1\end{matrix}\right).
\]
An easy row reduction computation shows that when $p\ne3$, we have $\lspan\pi(S_3)=M_2(F)$.
Let us assume $p\ne3$.
In the case $p>3$, the desired conclusion of the lemma will follow from Maschke's theorem, but by
performing the actual computations, we will now see that the conclusion holds also for $p=2$.
Let $Q=\frac13(2-a-a^2)\in F[S_3]$.
Then $Q^2=Q$, and $Q(F[S_3])Q$ is a subalgebra of $F[S_3]$.
An easy computation shows that $Q(F[S_3])Q$ has dimension $4$ over $F$, and
$\pi(Q)=\left(\begin{smallmatrix}1&0\\0&1\end{smallmatrix}\right)$;
this implies that the restriction of $\pi$ to $Q(F[S_3])Q$ is an isomorphism onto $M_2(F)$
and $Q(F[S_3])Q\cong M_2(F)$ as algebras.
The lemma is proved in the case of $n=2$ and $p\ne3$.

We now suppose $p>2$ and consider the dihedral group of order $8$
\[
\Dih_4=\langle c,d\mid c^4=d^2=1,\,dcd=c^{-1}\rangle
\]
and its representation on $F^2$ given by
\[
\sigma(c)=\left(\begin{matrix} 0& -1 \\ 1 & 0\end{matrix}\right),\qquad
\sigma(d)=\left(\begin{matrix} 1& 0 \\ 0 & -1\end{matrix}\right),
\]
which gives
\begin{gather*}
\sigma(c^2)=\left(\begin{matrix} -1& 0 \\ 0 & -1\end{matrix}\right),\qquad
\sigma(c^3)=\left(\begin{matrix} 0& 1 \\ -1 & 0\end{matrix}\right), \\
\sigma(cd)=\left(\begin{matrix} 0& 1 \\ 1 & 0\end{matrix}\right),\qquad
\sigma(c^2d)=\left(\begin{matrix} -1& 0 \\ 0 & 1\end{matrix}\right),\qquad
\sigma(c^3d)=\left(\begin{matrix} 0& -1 \\ -1 & 0\end{matrix}\right).
\end{gather*}
We easily see $\lspan\sigma(\Dih_4)=M_2(F)$.
Now the result follows by Maschke's theorem, but let us perform the easy calculation.
Letting $Q=\frac12(1-c^2)\in F[\Dih_4]$, we have $Q^2=Q$ and $Q(F[\Dih_4])Q$ is a subalgebra
of $F[\Dih_4]$.
We have $\sigma(Q)=\left(\begin{smallmatrix}1&0\\0&1\end{smallmatrix}\right)$ and $\dim(Q(F[\Dih_4])Q)=4$
and the restriction of $\sigma$ to $Q(F[\Dih_4])Q$ is an isomorphism onto $M_2(F)$.
Thus, the lemma is proved in the case $n=2$ and $p>2$.
Taken together, these considerations prove the lemma in the case of $n=2$.

For groups $H_1$ and $H_2$ we have the natural identification
$F[H_1\times H_2]\cong F[H_1]\otimes_FF[H_2]$, and for positive integers $m$ and $n$ we have
$M_m(F)\otimes_F M_n(F)\cong M_{mn}(F)$.
Therefore, starting from the case $n=2$ of the lemma and taking cartesian products of 
an appropriate group, arguing by induction we prove the lemma in the case when $n$ is a power of $2$.
Now taking corners of the matrix algebras $M_{2^k}(F)$ proves the lemma for arbitrary $n$.
\end{proof}

\begin{remark}
From the above proof, we see that the finite group $H$ can always be taken to be a cartesian product
of copies of the symmetric group $S_3$ or of the dihedral group $\Dih_4$, depending on the characteristic of $F$.
Thus, the hypothesis of Theorem~\ref{thm:KGsf} below can be correspondingly weakened
by requiring $K[\Gamma\times H]$ to be directly finite only for these groups $H$.
\end{remark}

Recall from the introduction that an algebra $\Afr$ is said to be stably finite if all matrix algebras $M_n(\Afr)$
over it are directly finite.
\begin{thm}\label{thm:KGsf}
Let $K$ be a division ring and $\Gamma$ a group.
If $K[\Gamma\times H]$ is directly finite for every finite group $H$,
then $K[\Gamma]$ is stably finite.
\end{thm}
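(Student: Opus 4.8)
The plan is to reduce direct finiteness of the matrix algebra $M_n(K[\Gamma])$ to direct finiteness of a group ring $K[\Gamma\times H]$ for a suitable finite group $H$, using Lemma~\ref{lem:finGp}. The starting observation is that for any algebra $\Afr$ and any finite group $H$, one has a natural isomorphism $\Afr\otimes_K K[H]\cong K[\Gamma\times H]$ when $\Afr=K[\Gamma]$, and more importantly, if $K[H]$ contains a subring isomorphic to $M_n(K)$ (when $K$ is a field; one must be slightly careful in the division-ring case, see below), then $M_n(\Afr)\cong \Afr\otimes_K M_n(K)$ embeds unitally into $\Afr\otimes_K K[H]\cong K[\Gamma\times H]$. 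Since a corner $eRe$ of a directly finite ring $R$ (cut by an idempotent $e$) is directly finite, and a unital subring of a directly finite ring is directly finite, direct finiteness of $K[\Gamma\times H]$ would pass down to $M_n(K[\Gamma])$.

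The first step is therefore to record the general principle: if $B$ is a unital ring, $e\in B$ an idempotent, and $B$ is directly finite, then $eBe$ is directly finite (if $xy=e$ with $x,y\in eBe$, then $(x+1-e)(y+1-e)=1$ in $B$, hence $(y+1-e)(x+1-e)=1$, hence $yx=e$); and a unital subring of a directly finite ring is directly finite. Second, I would invoke Lemma~\ref{lem:finGp}: given $n$, choose a finite group $H$ with $F[H]\supseteq M_n(F)$ as a subring, where $F$ is the prime field (or an appropriate central subfield) of $K$. Third, I would form the amplification: $M_n(K[\Gamma])\cong K[\Gamma]\otimes_F M_n(F)\hookrightarrow K[\Gamma]\otimes_F F[H]\cong K[\Gamma\times H]$, where the last isomorphism sends $g\otimes h$ to $(g,h)$. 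Then $M_n(K[\Gamma])$ is realized as $p\big(K[\Gamma\times H]\big)p$ for $p$ the identity of the copy of $M_n(F)$, or simply as a unital subring of $K[\Gamma\times H]$; by hypothesis the latter is directly finite, so $M_n(K[\Gamma])$ is directly finite, and since $n$ was arbitrary, $K[\Gamma]$ is stably finite.

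The main obstacle is the non-commutativity of $K$: the tensor-product identity $M_n(\Afr)\cong \Afr\otimes M_n(K)$ and $K[\Gamma]\otimes_F F[H]\cong K[\Gamma\times H]$ require tensoring over a \emph{central} subfield of $K$, so one should take $F$ to be the prime subfield of $K$ (contained in the center) and apply Lemma~\ref{lem:finGp} to that $F$. One must check that the resulting subring $M_n(F)\subseteq F[H]\subseteq K[H]$ genuinely commutes with $K[\Gamma]$ inside $K[\Gamma\times H]$ (it does, since $F$ is central and $\Gamma$ and $H$ commute), so that the image of $M_n(K[\Gamma])=K[\Gamma]\otimes_F M_n(F)$ really is a subring isomorphic to $M_n(K[\Gamma])$. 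Apart from this bookkeeping about centrality and the identification of the relevant idempotent $Q$ from Lemma~\ref{lem:finGp}, the argument is routine. As noted in the Remark, one may even restrict the hypothesis to $H$ a cartesian product of copies of $S_3$ or $\Dih_4$ according to the characteristic of $K$.
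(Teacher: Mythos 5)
Your argument is correct and takes essentially the same route as the paper: Lemma~\ref{lem:finGp} over the central prime field $F$, the identification $K[\Gamma\times H]\cong K[\Gamma]\otimes_F F[H]$, and then passing direct finiteness of $K[\Gamma\times H]$ down to $M_n(K[\Gamma])$. The paper handles the non-unitality of $M_n(F)\subseteq F[H]$ by embedding $M_n(K[\Gamma])\oplus K$ unitally and padding $c,d$ with $1$, which is literally your corner computation $(x+1-e)(y+1-e)=1$; only your parenthetical alternative ``simply as a unital subring of $K[\Gamma\times H]$'' is off, since the unit of $M_n(K[\Gamma])$ maps to an idempotent $p\neq 1$, so the corner (or padding) step is genuinely needed.
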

\begin{proof}
Let $n$ be a positive integer.
Let $F$ be the base field of $K$.
By Lemma~\ref{lem:finGp}, choose a finite group $H$ so that $F[H]$ contains $M_n(F)$ as a subalgebra.
Then $F[H]$ contains a copy of $M_n(F)\oplus F$ as a unital subalgebra, and we have
\begin{multline*}
K[\Gamma\times H]\cong K[\Gamma]\otimes_{F}F[H]\supseteq K[\Gamma]\otimes_F(M_n(F)\oplus F) \\[1ex]
\cong(K[\Gamma]\otimes_FM_n(F))\oplus K[\Gamma]
\cong M_n(K[\Gamma])\oplus K[\Gamma]\supseteq M_n(K[\Gamma])\oplus K,
\end{multline*}
where all inclusions are as unital subalgebras.
Now given $c,d\in M_n(K[\Gamma])$ such that $cd=1$, take $a=c\oplus1$ and $b=d\oplus1$ in
$M_n(K[\Gamma])\oplus K$.  We have $ab=1$, and by the above inclusions and the direct finiteness of
$K[\Gamma\times H]$, we must have $ba=1$, so $dc=1$.
\end{proof}

Consequently, truth of the Direct Finiteness Conjecture implies truth of the stronger looking
Stable Direct Finiteness Conjecture.
\begin{cor}
For $K$ is a division ring, if $K[\Gamma]$ is directly finite for all groups $\Gamma$,
then $K[\Gamma]$ is stably finite for all groups $\Gamma$.
\end{cor}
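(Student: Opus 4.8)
The plan is to deduce this immediately from Theorem~\ref{thm:KGsf}. Fix a division ring $K$ and assume that $K[\Lambda]$ is directly finite for every group $\Lambda$. Let $\Gamma$ be an arbitrary group; I want to show $K[\Gamma]$ is stably finite. The key observation is that the hypothesis, applied to the particular groups of the form $\Gamma\times H$ with $H$ finite, gives exactly the hypothesis of Theorem~\ref{thm:KGsf}: indeed $\Gamma\times H$ is again a group, so $K[\Gamma\times H]$ is directly finite by assumption, for every finite group $H$.

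Having verified that, the remaining step is a single invocation: Theorem~\ref{thm:KGsf} then yields that $K[\Gamma]$ is stably finite, i.e.\ that $M_n(K[\Gamma])$ is directly finite for every positive integer $n$. Since $\Gamma$ was arbitrary, this holds for all groups $\Gamma$, which is the assertion.

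There is essentially no obstacle here; the corollary is a formal consequence of the theorem, and the only thing to watch is that one must quantify over \emph{all} groups in the hypothesis so that the product groups $\Gamma\times H$ are covered. I would simply write the two sentences above as the proof, with a pointer to Theorem~\ref{thm:KGsf} doing all the work.
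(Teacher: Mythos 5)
Your proposal is correct and is exactly the argument the paper intends: the corollary is stated as an immediate consequence of Theorem~\ref{thm:KGsf}, obtained by applying the hypothesis (direct finiteness of $K[\Lambda]$ for all groups $\Lambda$) to the groups $\Gamma\times H$ with $H$ finite. Nothing further is needed.
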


\section{Universal  Left Invertible Element groups}
\label{sec:K}

As at the start of Section~\ref{sec:Obs}, let us consider elements $a,b\in K[G]$ whose ranks are $\ge2$
and so that $ab=1$,
and let us write
\[
a=r_0a_0+\cdots+r_{m-1}a_{m-1},\qquad b=s_0b_0+\cdots+s_{n-1}b_{n-1}
\]
with the same conventions, and with $a_0=b_0=1$.
Let $\pi$ be the partition of the set
\begin{equation}\label{eq:set}
\{0,\ldots,m-1\}\times\{0,\ldots,n-1\}
\end{equation}
defined by 
\begin{equation}\label{eq:pirel}
(i,j)\simpi(i',j')\text{ if and only if }a_ib_j=a_{i'}b_{j'},
\end{equation}
where $(i,j)\simpi(i',j')$ means that $(i,j)$ and $(i',j')$ belong
to the same set of the partition $\pi$.
Then we have, for all $E\in\pi$,
\begin{equation}\label{eq:rssum}
\sum_{(i,j)\in E}r_is_j=
\begin{cases}
1,&(0,0)\in E, \\
0,&(0,0)\notin E.
\end{cases}
\end{equation}
\begin{defi}
We call $\pi$ the {\em cancellation partition} for the pair $(a,b)$ with respect
to the orderings $(a_0,\ldots,a_{m-1})$ and $(b_0,\ldots,b_{n-1})$ of their supports.
\end{defi}

\begin{defi}\label{def:Gammapi}
Given a partition $\pi$ of the set~\eqref{eq:set}, we consider the group $\Gamma_\pi$ with presentation
\begin{equation}\label{eq:Gammapi}
\Gamma_\pi=\langle a_0,a_1,\ldots,a_{m-1},b_0,b_1,\ldots,b_{n-1}
\mid a_0=b_0=1,\,(a_ib_j=a_{i'}b_{j'})_{(i,j)\simpi(i',j')}\rangle,
\end{equation}
where the relations are indexed over the set of all pairs $((i,j),(i',j'))$ of elements of~\eqref{eq:set}
that belong a same set of the partition $\pi$.
\end{defi}

\begin{defi}\label{def:degen}
Let $m,n\in\{2,3,\ldots\}$ and let $\pi$ be a partition of the set~\eqref{eq:set}.
We say that $\pi$ is  {\em degenerate}
if, in the group $\Gamma_\pi$ above,
the group elements $a_0,a_1,\ldots,a_{m-1}$ are not distinct or the group elements $b_0,b_1,\ldots,b_{n-1}$
are not distinct.
\end{defi}

\begin{remark}\label{rem:degen}
A partition $\pi$ is clearly degenerate if we have $(i,j)\simpi(i,j')$ for any $j\ne j'$ or $(i,j)\simpi(i',j)$
for any $i\ne i'$, i.e., if $\pi$ groups together any two elements in the same row or column.
\end{remark}

\begin{defi}\label{def:real}
Let $m,n\in\{2,3,\ldots\}$ and let $\pi$ be a partition of the set~\eqref{eq:set}.
Let $K$ be a division ring and let $r_0,\ldots,r_{m-1},s_0,\ldots,s_{n-1}$ be nonzero
elements of $K$.
We say that $\pi$ is {\em realizable with}
$r_0,\ldots,r_{m-1},s_0,\ldots,s_{n-1}$ if the equalities~\eqref{eq:rssum} hold for all $E\in\pi$.
We say that $\pi$ is {\em realizable over} $K$ if it is realizable with some nonzero
elements of $K$, and we say that $\pi$ is {\em realizable} if it is realizable over some
division ring $K$.
\end{defi}

\begin{remark}\label{rem:singleton}
A realizable partition can have at most one singleton, which would then be $\{(0,0)\}$.
\end{remark}

We order the partitions of a set in the usual way, writing $\pi\le\sigma$ if every element of $\pi$ is a
subset of an element of $\sigma$.
Then $\Gamma_\sigma$ is a quotient of $\Gamma_\pi$ by the map sending canonical generators to their namesakes,
so $\sigma$ nondegenerate implies $\pi$ nondegenerate.

\begin{defi}
Let $K$ be a division ring and 
let $r_0,\ldots,r_{m-1},s_0,\ldots,s_{n-1}\in K\backslash\{0\}$.
A partition $\pi$ of the set~\eqref{eq:set} is
{\em minimally realizable with} $r_0,\ldots,r_{m-1},s_0,\ldots,s_{n-1}$
if it is minimal among all
the partitions that are realizable with $r_0,\ldots,r_{m-1},s_0,\ldots,s_{n-1}$.
We say $\pi$ is {\em minimally realizable over} $K$ if it is minimally realizable
with some choice of $r_0,\ldots,r_{m-1},s_0,\ldots,s_{n-1}\in K\backslash\{0\}$,
and it $\pi$ is simply {\em minimally realizable} if it is minimally realizable over some division ring $K$.
\end{defi}

\begin{remark}\label{rem:pairpartitions}
The partitions of the set~\eqref{eq:set} that
are minimally realizable over the field $\Fb_2$ of two elements are precisely the partitions having
only pairs except for the singleton set $\{(0,0)\}$.
Thus, the existence of such a partition implies that $m$ and $n$ are both odd.
\end{remark}

\begin{remark}\label{rem:minreal}
The notion of being minimally realizable over $K$ is ostensibly different
from being minimal among the partitions that are realizable over $K$,
just as being minimally realizable is different from being minimal among the realizable partitions.
This is because the quality of being minimally realizable is bound up with a particular choice of
field elements $r_0,\ldots,r_{m-1},s_0,\ldots,s_{n-1}$.
We see that this is important, for example
in the proof of Theorem~\ref{thm:KapULIE}.
However, see Remark~\ref{rem:whyminimal} for more on this.
\end{remark}

\begin{defi}
Let $K$ be a division ring and let $m,n\ge2$ be integers.
Let $\ULIE_K(m,n)$ be the set of all groups $\Gamma_\pi$ as in Definition~\ref{def:Gammapi}
as $\pi$ runs over all  partitions $\pi$ of the set~\eqref{eq:set} that are both nondegenerate
and minimally realizable over $K$.
We will say that an {\em ULIE$_K$ group} is one that belongs to the set $\bigcup_{m,n\ge2}\ULIE_K(m,n)$.
Similarly, we let $\ULIE(m,n)$ denote the set of all groups $\Gamma_\pi$ as $\pi$ runs over all partitions
$\pi$ of~\eqref{eq:set} that are both nondegenerate and minimally realizable, and
say that an {\em ULIE group} is one that belongs to the set 
$\bigcup_{m,n\ge2}\ULIE(m,n)$.
(ULIE is an acronym for {\em Universal Left Invertible Element}.)
Finally, we let $\ULIE^{(-)}_K(m,n)\subseteq\ULIE_K(m,n)$ be equal to$\ULIE_K(m,n)$ if $m\ne n$ and,
if $m=n$, we let it consist of the complement of the set of $\ULIE_K(m,n)$--groups $\Gamma_\pi$ for partitions $\pi$
that are minimally realizable for some $r_0,\ldots,r_{m-1},s_0,\ldots,s_{n-1}\in K$ and so that the partition forces equality
$r_0a_0+\cdots+r_{m-1}a_{m-1}=s_0b_0+\cdots+s_{n-1}b_{n-1}$ in the group ring $K[\Gamma_\pi]$.
\end{defi}

\begin{remark}\label{rem:whyminimal}
We have introduced the ULIE groups in order to restrict the class of groups $\Gamma$
that would need to be tested for $K[\Gamma]$ being directly finite, in order to prove Kaplansky's conjecture.
We see this in Theorems~\ref{thm:KapULIE} and~\ref{thm:KapULIE1}
and Corollaries~\ref{cor:ULIEK} and~\ref{cor:ULIEK1} below.
However, there is no harm in increasing the class of groups that are tested.
Keeping this in mind, one may find that it is better not to worry about minimally realizable partitions
$\pi$, but, for example, for a given rank pair
$(m,n)$ to test simply all groups $\Gamma_\pi$ over all nondegenerate partitions $\pi$ of
$\{0,\ldots,m-1\}\times\{0,\ldots,n-1\}$,
rather than first to decide which of them are minimally realizable or even realizable.
Of course, if we restrict to the field $K=\Fb_2$ of two elements, then,
as seen in Remark~\ref{rem:pairpartitions},
the minimally realizable partitions have a particularly simple form.
But for general $K$ this is not clear to us.

In connection with the Invertibles Conjecture,
in Theorem~\ref{thm:InvTorULIE} the implication (i)$\implies$(ii) does depend on taking
realizable partitions, though they need not be minimally realizable.
\end{remark}

\begin{defi}
For a division ring $K$ and for integers $m,n\ge2$,
we will say that {\em Kaplansky's Direct Finiteness Conjecture
holds over $K$ for rank pair} $(m,n)$ if for all groups $G$ and all $a,b\in K[G]$
with rank of $a$ equal to $m$ and rank of $b$ equal to $n$,
$ab=1$ implies $ba=1$.
We will say that {\em Kaplansky's Direct Finiteness Conjecture
holds over $K$ for rank} $m$ if it holds for all rank pairs $(m,n)$ with $n\ge2$, namely, if
for all groups $G$, right invertibility of $a\in K[G]$ with $\rank(a)=m$ implies invertibility of $a$.
We will say that {\em Kaplansky's Direct Finiteness Conjecture
holds over $K$} if it holds over $K$ for all rank pairs, namely, if $K[G]$ is directly
finite for all groups $G$.
\end{defi}

\begin{remark}\label{rem:Gop}
Given a group $G$, we can define the group $G^{\text{\it op}}$ to be the set $G$ equipped with the
opposite binary operation: the product of $g$ and $h$ in $G^{\text{\it op}}$ is defined to be the element
$hg$ of $G$.
Using $G^{\text{\it op}}$, we easily see that Kaplansky's Direct Finiteness Conjecture holds over $K$
for rank pair $(m,n)$ if and only if it holds over $K$ for rank pair $(n,m)$.
Furthermore, this implies that Kaplansky's Direct Finiteness Conjecture holds for rank $m$ if and only if
for any group $G$ and for any $a\in K[G]$ of rank $m$,
one--sided invertibility $a$ of implies invertibility of $a$.
\end{remark}

The idea of the following theorem was explained (and an adequate proof in the case $K=\Fb_2$ was given)
in the introduction.
\begin{thm}\label{thm:KapULIE}
Let $K$ be a division ring and let $m,n\ge2$ be integers.
Suppose that for every group $\Gamma\in\ULIE^{(-)}_K(m,n)$, the group ring $K[\Gamma]$ is
directly finite.
Then Kaplansky's Direct Finiteness Conjecture holds over $K$
for rank pair $(m,n)$.
\end{thm}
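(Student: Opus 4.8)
The plan is to run, over a general division ring $K$, the reduction sketched for $K=\Fb_2$ in the introduction, while keeping careful track of the coefficients and of the minimality of the partition produced.

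First I would put an arbitrary witness into normal form. Suppose $G$ is a group and $a,b\in K[G]$ satisfy $\rank(a)=m$, $\rank(b)=n$ and $ab=1$; write $a=\sum_{i=0}^{m-1}r_ia_i$ and $b=\sum_{j=0}^{n-1}s_jb_j$ as in~\eqref{eq:a=b=}, with the $a_i$ distinct and the $b_j$ distinct, and, after renumbering and rescaling, arrange $a_0=b_0=1$ and $r_0=1$ (none of these operations affects whether $ab=1$ or whether $ba=1$). Let $\pi$ be the cancellation partition of $(a,b)$ for these orderings; by~\eqref{eq:rssum}, $\pi$ is realizable with $r_0,\dots,r_{m-1},s_0,\dots,s_{n-1}$. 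The set~\eqref{eq:set} has only finitely many partitions, so among those that are both $\le\pi$ and realizable with these same field elements --- a nonempty family, since it contains $\pi$ --- I would pick a minimal one, $\pi_0$. Since any partition strictly finer than $\pi_0$ is still $\le\pi$, the partition $\pi_0$ is in fact minimal among \emph{all} partitions realizable with $r_0,\dots,r_{m-1},s_0,\dots,s_{n-1}$; in particular $\pi_0$ is minimally realizable over $K$.

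Next I would transport the problem to $\Gamma_{\pi_0}$. Because $\pi_0\le\pi$, every defining relation of $\Gamma_{\pi_0}$ holds in $G$ (it is one of the cancellation equalities $a_ib_j=a_{i'}b_{j'}$, together with $a_0=b_0=1$), so there is a homomorphism $\phi\colon\Gamma_{\pi_0}\to G$ taking each generator to its namesake. As the $a_i$ are distinct in $G$ they are distinct in $\Gamma_{\pi_0}$, and likewise for the $b_j$; thus $\pi_0$ is nondegenerate and $\Gamma_{\pi_0}\in\ULIE_K(m,n)$. Expanding $(\sum_i r_ia_i)(\sum_j s_jb_j)$ inside $K[\Gamma_{\pi_0}]$ and collecting terms by the blocks of $\pi_0$, the defining relations make each block contribute a single group element weighted by the corresponding sum in~\eqref{eq:rssum}; hence the product collapses to $1$, i.e.\ the namesake elements $a,b\in K[\Gamma_{\pi_0}]$ again satisfy $ab=1$. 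Now if $\Gamma_{\pi_0}\in\ULIE^{(-)}_K(m,n)$ --- which is automatic when $m\ne n$ --- the hypothesis says $K[\Gamma_{\pi_0}]$ is directly finite, so $ba=1$ in $K[\Gamma_{\pi_0}]$; pushing this through the algebra homomorphism $K[\Gamma_{\pi_0}]\to K[G]$ induced by $\phi$ (which sends $a\mapsto a$ and $b\mapsto b$) yields $ba=1$ in $K[G]$, as desired.

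The remaining case, $m=n$ with $\Gamma_{\pi_0}\notin\ULIE^{(-)}_K(m,n)$, is the step I expect to be the main obstacle, and it is precisely the reason for introducing $\ULIE^{(-)}$. Here $\pi_0$ is minimally realizable with some field elements $r'_i,s'_j$ for which its relations force $\sum_i r'_ia_i=\sum_j s'_jb_j$ in $K[\Gamma_{\pi_0}]$; comparing supports (the $a_i$, and separately the $b_j$, being distinct) this forces $\{a_0,\dots,a_{m-1}\}=\{b_0,\dots,b_{m-1}\}$ in $\Gamma_{\pi_0}$, and hence, via $\phi$, in $G$ as well, say $b_{\tau(i)}=a_i$ for some permutation $\tau$ (which fixes $0$ because $a_0=b_0=1$). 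One then has to show, using the realizability equations~\eqref{eq:rssum} for our own coefficients $r_i,s_j$ together with these identifications, that $ba=1$ already holds in $K[\Gamma_{\pi_0}]$ (equivalently, after applying $\phi$, in $K[G]$) without any appeal to the hypothesis; in the typical situation the equations pin $b$ down as $s_0(2\cdot 1-a)$, so that $ab=1$ forces $s_0(2a-a^2)=1$ and therefore $ba=s_0(2a-a^2)=1$. The delicate point is that the identification $a_i=b_{\tau(i)}$ need not come from a single defining relation of $\Gamma_{\pi_0}$, so one must argue that whatever way these supports can coincide, the realizability data still forces $b$ into $K1+Ka$ (or otherwise yields $ba=1$ directly); this is exactly why the partitions forcing $a$ and $b$ to be proportional are excluded from $\ULIE^{(-)}_K(m,m)$ --- they never represent a genuine failure of direct finiteness.
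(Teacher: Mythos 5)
Your main reduction is exactly the paper's: normalize so that $a_0=b_0=1$, form the cancellation partition, pass to a partition $\pi_0$ below it that is minimally realizable with the \emph{same} coefficients, check nondegeneracy via the homomorphism $\Gamma_{\pi_0}\to G$, verify $ab=1$ in $K[\Gamma_{\pi_0}]$ using~\eqref{eq:rssum}, and push $ba=1$ forward through the induced ring homomorphism when the hypothesis applies. Up to and including the case $\Gamma_{\pi_0}\in\ULIE^{(-)}_K(m,n)$ this is correct and coincides with the paper's argument (your extra remark on why minimality within the family $\{\text{partitions }\le\pi\text{ realizable with }r_i,s_j\}$ gives minimal realizability is fine).

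The genuine gap is the case $m=n$ with $\Gamma_{\pi_0}\notin\ULIE^{(-)}_K(m,m)$: you never actually prove $ba=1$ there. You only state what ``one then has to show'' --- that the realizability data forces $b\in K1+Ka$ or otherwise yields $ba=1$ --- and the ``typical situation'' computation with $b=s_0(2\cdot 1-a)$ is an unsubstantiated special case, not an argument (and over a noncommutative $K$ even $b\in K1+Ka$ does not immediately give $ab=ba$, since left coefficients need not commute with the $r_i$). The paper disposes of this case in one line, directly from the definition of $\ULIE^{(-)}_K(m,m)$: the excluded groups are precisely those $\Gamma_\pi$ for which the partition forces the equality $r_0a_0+\cdots+r_{m-1}a_{m-1}=s_0b_0+\cdots+s_{n-1}b_{n-1}$, i.e.\ $a=b$ in $K[\Gamma_\pi]$; then $ab=1$ gives $ba=ab=1$ with no further analysis of supports or coefficients. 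Your instinct to worry that the definition says ``for some $r_0,\ldots,s_{n-1}$'' (possibly different from the coefficients at hand) is a legitimate reading question, but raising it without resolving it leaves your proof incomplete exactly where the exclusion in $\ULIE^{(-)}$ was designed to finish the argument; to close the proof you must either adopt the paper's reading (the forcing applies to the elements $a,b$ built from your coefficients, so $a=b$) or supply the missing argument you only sketched.
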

\begin{proof}
Let $G$ be any group and let $c$
and $d$ be elements of $K[G]$ having ranks $m$ and $n$, respectively, and assume $cd=1$.
We must show $dc=1$.
We may write $c=r_0c_0+\cdots+r_{m-1}c_{m-1}$
for distinct elements $c_0,\ldots,c_{m-1}$ of $G$,
and $d=s_0d_0+\cdots+s_{n-1}d_{n-1}$
for distinct elements $d_0,\ldots,d_{n-1}$ of $G$
and for nonzero elements
$r_0,\ldots r_{m-1},s_0,\ldots,s_{n-1}$ of $K$.

After renumbering, we may without loss of generality assume $c_0d_0=1$;
replacing $c$ by $c_0^{-1}c$ and $d$ by $dd_0^{-1}$,
we may assume $c_0=d_0=1$.
Let $\sigma$ be the partition of the set~\eqref{eq:set}
that is the cancellation partition for the pair $(c,d)$ with respect to the given
orderings of their supports.
Then there is a group homomorphism $\psi:\Gamma_\sigma\to G$ sending each $a_i$ to $c_i$ and each $b_j$ to $d_j$.
This implies that $\sigma$ is nondegenerate.
Clearly, it is realizable with $r_0,\ldots,r_{m-1},s_0,\ldots,s_{n-1}$.
Let $\pi\le\sigma$ be a partition of the set~\eqref{eq:set}
that is minimally realizable with $r_0,\ldots,r_{m-1},s_0,\ldots,s_{n-1}$.
Thus, taking $a=r_0a_0+\cdots+r_{m-1}a_{m-1}$ and $b=s_0b_0+\cdots+s_{n-1}b_{n-1}$ in $K[\Gamma_\pi]$,
where the $a_i$ and $b_j$ are the named generators in the group $\Gamma_\pi$ from Definition~\ref{def:Gammapi},
by virtue of the defining relations in~\eqref{eq:Gammapi},
we have $ab=1$.
Combining $\psi$ with the natural quotient group homomorphism $\Gamma_\pi\to\Gamma_\sigma$, we get
a group homomorphism $\phi:\Gamma_\pi\to G$ that extends linearly to
a unital ring homomorphism $K[\Gamma_\pi]\to K[G]$
sending $a$ to $c$ and $b$ to $d$.
By hypothesis either $K[\Gamma_\pi]$ is directly finite or we have $a=b$ in $K[\Gamma_\pi]$.
In either case, we conclude $ba=1$ in $K[\Gamma_\pi]$.
Therefore, we have $dc=1$ in $K[G]$.
\end{proof}

\begin{cor}\label{cor:ULIEK}
Let $K$ be a division ring.
Then Kaplansky's conjecture holds over $K$ if and only if for all 
ULIE$_K$ groups $\Gamma$, the group ring $K[\Gamma]$ is directly finite.
\end{cor}

A strategy for testing Kaplansky's Direct Finiteness Conjecture over a division ring $K$
is, thus, to check for direct finiteness of the groups belonging to $\ULIE_K(m,n)$ for various ranks $m$ and $n$.
In fact, as we will now show, if we proceed by starting small and incrementing $m$ and $n$ by only one each time,
then we can restrict
to testing a slightly smaller set of groups.

\begin{defi}
We consider the set~\eqref{eq:set}. 
We view this set as laid out
like an $m\times n$ matrix, with rows numbered $0$ to $m-1$ and columns numbered $0$ to $n-1$.
Given a parition $\pi$ of this set, we let $\simpi$ be the corresponding
equivalence relation, whose equivalence classes are the sets of the partition.
Let $\simr$ be the equivalence relation on $\{0,\ldots,m-1\}$ that is generated by
$\simpi$ under the projection onto the first coordinate, namely, generated by the relations
\[
\{i\simr i'\mid \exists j,j'\text{ with }(i,j)\simpi(i',j')\}.
\]
We say $\pi$ is {\em row connected} if $\simr$ has only one equivalence class.

Similarly,
let $\simc$ be the equivalence relation on $\{0,\ldots,n-1\}$ that is generated by
$\simpi$ under the projection onto the second coordinate, namely, generated by the relations
\[
\{j\simc j'\mid \exists i,i'\text{ with }(i,j)\simpi(i',j')\}.
\]
We say $\pi$ is {\em column connected} if $\simc$ has only one equivalence class.
\end{defi}

\begin{lemma}\label{lem:connected}
Let $K$ be a division ring and let $M,N\ge2$ be integers.
Let $G$ be a group and suppose $c,d\in K[G]$ have ranks $M$ and $N$, respectively,
both have the identity element of $G$ in their supports and satisfy $cd=1$.
Write $c=r_0c_0+\cdots r_{M-1}c_{M-1}$ and $d=s_0d_0+\cdots s_{N-1}d_{N-1}$ for group elements
$c_i$ and $d_j$ and assume $c_0=d_0=1$.
Let $\sigma$ be the
cancellation partition of $(c,d)$ with respect to these orderings of the supports.
\begin{enumerate}[(a)]
\item
Suppose Kaplansky's conjecture holds over $K$ for all rank pairs $(m,N)$, with $2\le m<M$.
Then $\sigma$ is row connected.
Moreover, if $\pi\le\sigma$ is a partition that is realizable
with $r_0,\ldots,r_{M-1},s_0,\ldots,s_{N-1}$, then $\pi$ is row connected.
\item
Suppose Kaplansky's conjecture holds over $K$ for all rank pairs $(M,n)$, with $2\le n<N$.
Then $\sigma$ is column connected.
Moreover, if $\pi\le\sigma$ is a partition that is realizable
with $r_0,\ldots,r_{M-1},s_0,\ldots,s_{N-1}$, then $\pi$ is column connected.
\end{enumerate}
\end{lemma}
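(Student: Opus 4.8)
The plan is to prove part~(a) and to deduce part~(b) from it by passing to the opposite group $G^{\mathrm{op}}$ of Remark~\ref{rem:Gop}. In $K[G^{\mathrm{op}}]$ the elements $\bar b,\bar a$ having the same coefficients as $b,a$ satisfy $\bar b\bar a=1$, the ranks $N$ and $M$ exchange roles, the cancellation partition of $(\bar b,\bar a)$ is the transpose of $\sigma$, and its row connectedness is exactly column connectedness of $\sigma$; moreover the hypothesis of~(b) (Kaplansky for rank pairs $(M,n)$, $n<N$) turns into the hypothesis of~(a) for $G^{\mathrm{op}}$ via the equivalence of Kaplansky for $(M,n)$ and for $(n,M)$ noted in Remark~\ref{rem:Gop}. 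So it suffices to treat part~(a).

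For part~(a), the engine I would use is a separation of supports, first applied to $\sigma$ itself. Suppose $\sigma$ is not row connected; let $R$ be the $\simr$-class (taken with respect to $\sigma$) of $0$, so $\emptyset\ne R\subsetneq\{0,\dots,M-1\}$, and split $c=c'+c''$ with $c'=\sum_{i\in R}r_ic_i$. Since two pairs lying in a common block of $\sigma$ have $\simr$-equivalent first coordinates, the sets $\{c_id_j:i\in R\}$ and $\{c_id_j:i\notin R\}$ are disjoint in $G$; as $1=c_0d_0$ lies in the first, comparing coefficients in $cd=c'd+c''d=1$ gives $c'd=1$. Here $\rank c'=|R|<M$, and $|R|\ge 2$ — otherwise $c'\in K\cdot1$ would force $\rank d=1$, against $N\ge2$ — so the assumed rank $(|R|,N)$ case of Kaplansky applies to $c'd=1$ and yields $dc'=1$. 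Then $d$ is invertible with inverse $c'$, whence $c=(cd)c'=c'$, contradicting $\rank c=M>|R|$. So $\sigma$ is row connected.

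For the ``moreover'' clause I would rerun this argument inside $K[\Gamma_\pi]$. If $\pi\le\sigma$ is realizable with $r_0,\dots,r_{M-1},s_0,\dots,s_{N-1}$, then $\pi$ is nondegenerate: indeed $\sigma$ is nondegenerate, since $a_i\mapsto c_i$, $b_j\mapsto d_j$ defines a homomorphism $\Gamma_\sigma\to G$ and the $c_i$ (resp.\ the $d_j$) are distinct, and nondegeneracy passes down to $\pi\le\sigma$. Hence in $K[\Gamma_\pi]$ the elements $a=\sum r_ia_i$ and $b=\sum s_jb_j$ formed from the canonical generators satisfy $ab=1$ by~\eqref{eq:Gammapi} and realizability, and have ranks exactly $M$ and $N$. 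Taking $R$ to be the $\simr$-class of $0$ with respect to $\pi$, everything transfers except the disjointness step: in $\Gamma_\pi$ we only know $(i,j)\simpi(i',j')\Rightarrow a_ib_j=a_{i'}b_{j'}$, not the converse, so it is not automatic that $a_ib_j\ne a_{i'}b_{j'}$ when $i\not\simr i'$.

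This is the one genuinely nonroutine point, and I would resolve it with a homomorphism to a free group. Let $F$ be the free group with one generator for each $\simr$-class other than $R$ and one for each $\simc$-class other than the one containing $0$; writing $x_C$ for the generator attached to a row class $C$ and $y_D$ for that attached to a column class $D$ (and $x_R=1$, and $y_D=1$ for $D$ the class of $0$), define $\rho\colon\Gamma_\pi\to F$ by $\rho(a_i)=x_{[i]}$, $\rho(b_j)=y_{[j]}$. Since $(i,j)\simpi(i',j')$ forces both $i\simr i'$ and $j\simc j'$, each relation of~\eqref{eq:Gammapi} becomes a trivial identity in $F$, so $\rho$ is well defined; and a one-line free-group computation shows that $x_Cy_D=x_{C'}y_{D'}$ in $F$ forces $C=C'$ (after free reduction the word records whether an $x$-letter occurs and, if so, which one). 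Hence $\rho$ separates $a_ib_j$ from $a_{i'}b_{j'}$ whenever $i\not\simr i'$, restoring the disjointness of supports; the argument then proceeds exactly as for $\sigma$, producing $a'b=1$ with $2\le\rank a'=|R|<M$, then $ba'=1$ from the rank $(|R|,N)$ case of Kaplansky, and finally $a=(ab)a'=a'$, the desired contradiction. I expect the construction of $\rho$ — a labelling of the generators that respects the presentation yet is fine enough to detect the row classes — to be the only place requiring an idea; the rest is coefficient bookkeeping and a single invocation of the inductive hypothesis.
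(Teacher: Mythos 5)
Your argument is correct in substance, and its skeleton is the same as the paper's: split $a=a'+a''$ along the $\simr$--class of $0$, obtain $a'b=1$ and $a''b=0$, invoke the lower--rank case of the DFC to get $ba'=1$, and derive a contradiction with the rank of $a$. Where you diverge is in how $a'b=1$, $a''b=0$ are justified for a general realizable $\pi\le\sigma$. You treat this as a support--disjointness problem and solve it with the homomorphism $\rho:\Gamma_\pi\to F$ onto a free group that detects the row and column classes; that construction is correct (the relations of~\eqref{eq:Gammapi} only ever equate products $a_ib_j$ with matching row and column classes, and distinct reduced words $x_Cy_D$ separate the classes). But it is not needed: since every block of $\pi$ has all of its entries in a single $\simr$--class of rows, the partial products $a'b$ and $a''b$ decompose into \emph{complete} blocks of $\pi$, and realizability~\eqref{eq:rssum} forces each block to contribute coefficient $0$ (or $1$ for the block of $(0,0)$) times a single group element --- regardless of whether values from different blocks coincide. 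This is the paper's one--line step, and it also subsumes your separate treatment of $\sigma$, since $\sigma$ is itself realizable with $r_0,\ldots,r_{M-1},s_0,\ldots,s_{N-1}$. So your free--group quotient buys a genuinely stronger statement (the products $a_ib_j$ from distinct row classes are distinct in $\Gamma_\pi$), at the cost of extra machinery the lemma does not require.

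One caveat on your reduction of (b) to (a): over a noncommutative division ring the elements with ``the same coefficients'' do \emph{not} satisfy $\bar d\,\bar c=1$ in $K[G^{\mathrm{op}}]$, because the scalars multiply in the reversed order; the correct identification is $K[G]^{\mathrm{op}}\cong K^{\mathrm{op}}[G^{\mathrm{op}}]$, and then the hypothesis must be transported as ``DFC over $K$ for $(M,n)$ $\Leftrightarrow$ DFC over $K^{\mathrm{op}}$ for $(n,M)$'' (Remark~\ref{rem:Gop} carries the same implicit caveat). Since part (a) is proved for arbitrary division rings, this repairs the reduction, but it is cleaner simply to run the row argument on columns, splitting $b=b'+b''$ with $ab'=1$ and $ab''=0$, exactly as the paper does.
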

\begin{proof}
For part~(a),
let $\Gamma_\pi$ be the group with presentation~\eqref{eq:Gammapi}.
Then there is a group homomorphism $\phi:\Gamma_\pi\to G$ sending each $a_i$ to $c_i$ and each $b_j$ to $d_j$.
Since $c_0,\ldots,c_{M-1}$ are distinct elements of $G$ and $d_0,\ldots,d_{N-1}$ are distinct elements of $G$, it follows
that $\pi$ is nondegenerate.
Letting $a=r_0a_0+\cdots+r_{M-1}a_{M-1}$ and $b=s_0b_0+\cdots+s_{N-1}b_{N-1}$, we also have $ab=1$ in $K[\Gamma_\pi]$.

Suppose, to obtain a contradiction, that $\pi$ is not row connected.
Then, after renumbering if necessary, we may assume there is $\ell\in\{1,\ldots,M-1\}$
such that $\{0,\ldots,\ell-1\}$ is a union of equivalence classes of $\simr$, i.e., such that
$i\not\simr i'$ whenever $0\le i<\ell\le i'<M$.
(Actually, using nondegeneracy and Remark~\ref{rem:degen}, we must have $2\le\ell\le M-2$.)
We have $a=a'+a''$, where
\begin{align*}
a'&=r_0a_0+\cdots+r_{\ell-1}a_{\ell-1} \\
a''&=r_\ell a_\ell+\cdots+r_{M-1}a_{M-1}.
\end{align*}
Using the defining relations of $\Gamma_\pi$ and the fact that $\pi$ is realizable with
$r_0,\ldots,r_{M-1}$, $s_0,\ldots,s_{N-1}$,
we get $a'b=1$ and $a''b=0$.

Since $\rank(a')=\ell<M$ and $\rank(b)=N$, by hypothesis we have $ba'=1$.
But then, in $K[\Gamma_\pi]$, we have
\[
0=0a'=(a''b)a'=a''(ba')=a''.
\]
Since $\pi$ is nondegenerate and all the $r_j$ are nonzero, this gives a contradiction.

The proof of part~(b) is similar:
assuming $\pi$ is not column connected,
we get analogously $b=b'+b''$ with $ab'=1$ and $ab''=0$, and this yields $b'a=1$ and $b''=0$, giving a contradiction.
\end{proof}

\begin{defi}
Let $K$ be a division ring and let $m,n\ge2$ be integers.
Let $\ULIE^{(1)}_K(m,n)$ be the set of all groups $\Gamma_\pi$ as in Definition~\ref{def:Gammapi}
as $\pi$ runs over all  partitions $\pi$ of the set~\eqref{eq:set} that are nondegenerate,
minimally realizable over $K$, row connected and column connected.
We will say that an {\em ULIE$^{(1)}_K$ group} is one that belongs to the set $\bigcup_{m,n\ge2}\ULIE^{(1)}_K(m,n)$.
Similarly, we let $\ULIE^{(1)}(m,n)$ denote the set of all groups $\Gamma_\pi$ as $\pi$ runs over all partitions
$\pi$ of~\eqref{eq:set} that are nondegenerate, minimally realizable, row connected and column connected, and
say that an {\em ULIE$^{(1)}$ group} is one that belongs to the set 
$\bigcup_{m,n\ge2}\ULIE^{(1)}(m,n)$.
Finally, we let $\ULIE^{(1-)}_K(m,n)\subseteq\ULIE^{(1)}_K(m,n)$ be equal to$\ULIE^{(1)}_K(m,n)$ if $m\ne n$ and,
if $m=n$, we let it consist of the complement of the set of $\ULIE^{(1)}_K(m,n)$--groups $\Gamma_\pi$ for partitions $\pi$
that are minimally realizable for some $r_0,\ldots,r_{m-1},s_0,\ldots,s_{n-1}\in K$ and so that the partition forces equality
$r_0a_0+\cdots+r_{m-1}a_{m-1}=s_0b_0+\cdots+s_{n-1}b_{n-1}$ in the group ring $K[\Gamma_\pi]$.
\end{defi}

Now Lemma~\ref{lem:connected} gives the following variant of Theorem~\ref{thm:KapULIE}:
\begin{thm}\label{thm:KapULIE1}
Let $K$ be a division ring and let $M,N\ge2$ be integers.
Suppose that for every group
\begin{equation}\label{eq:ULIE1cup}
\Gamma\in\bigcup_{\substack{2\le m\le M \\2\le n\le N}}\ULIE^{(1-)}_K(m,n),
\end{equation}
the group ring $K[\Gamma]$ is directly finite.
Then Kaplansky's Direct Finiteness Conjecture holds over $K$
for rank pair $(M,N)$.
\end{thm}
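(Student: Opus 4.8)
The plan is to mimic the proof of Theorem~\ref{thm:KapULIE}, but to use Lemma~\ref{lem:connected} to replace the appeal to arbitrary minimally realizable partitions with row- and column-connected ones, and to do this inductively on the rank pair so that the connectivity hypotheses of Lemma~\ref{lem:connected} are available when needed.

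First I would set up an induction on $M+N$ (or, more carefully, a double induction ordering the pairs $(m,n)$ so that $(m,n)$ precedes $(M,N)$ whenever $m\le M$, $n\le N$ and $(m,n)\ne(M,N)$). The inductive hypothesis is that Kaplansky's Direct Finiteness Conjecture holds over $K$ for every rank pair $(m,n)$ strictly preceding $(M,N)$ in this order; note this is not an extra assumption but follows because the union in~\eqref{eq:ULIE1cup} ranges over all $2\le m\le M$, $2\le n\le N$, so the hypothesis of the theorem for the pair $(M,N)$ already contains the hypothesis for every earlier pair. So in proving direct finiteness for rank pair $(M,N)$, I may freely assume it for all earlier pairs, in particular for all $(m,N)$ with $2\le m<M$ and all $(M,n)$ with $2\le n<N$ — exactly the hypotheses of parts~(a) and~(b) of Lemma~\ref{lem:connected}.

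Next, given a group $G$ and elements $c,d\in K[G]$ with $\rank(c)=M$, $\rank(d)=N$ and $cd=1$, I would normalize exactly as in the proof of Theorem~\ref{thm:KapULIE}: renumber so $c_0d_0=1$, replace $c$ by $c_0^{-1}c$ and $d$ by $dd_0^{-1}$ so that $c_0=d_0=1$, and form the cancellation partition $\sigma$ of $(c,d)$ with respect to the chosen orderings of the supports. As before, the existence of the homomorphism $\Gamma_\sigma\to G$ sending $a_i\mapsto c_i$, $b_j\mapsto d_j$ shows $\sigma$ is nondegenerate, and $\sigma$ is realizable with $r_0,\dots,r_{M-1},s_0,\dots,s_{N-1}$. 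Now I take $\pi\le\sigma$ to be a partition that is minimally realizable with these field elements. The point is that by Lemma~\ref{lem:connected}(a) and (b), applied using the inductive hypothesis just discussed, $\sigma$ — and hence every realizable-with-these-elements $\pi\le\sigma$, in particular the minimally realizable one — is both row connected and column connected. It is also nondegenerate since $\pi\le\sigma$ and $\sigma$ is nondegenerate. Therefore $\Gamma_\pi\in\ULIE^{(1)}_K(M,N)$; and in the case $M=N$, if the partition $\pi$ forces the equality $r_0a_0+\cdots+r_{M-1}a_{M-1}=s_0b_0+\cdots+s_{N-1}b_{N-1}$ in $K[\Gamma_\pi]$ then $a=b$ there, so either way the conclusion of the hypothesis applies: either $\Gamma_\pi$ lies in $\ULIE^{(1-)}_K(M,N)$ and $K[\Gamma_\pi]$ is directly finite, or $a=b$ in $K[\Gamma_\pi]$. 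Finally, as in Theorem~\ref{thm:KapULIE}, one has $ab=1$ in $K[\Gamma_\pi]$ by the defining relations, so in either case $ba=1$ in $K[\Gamma_\pi]$, and pushing this forward along the ring homomorphism $K[\Gamma_\pi]\to K[G]$ (which sends $a\mapsto c$, $b\mapsto d$) gives $dc=1$ in $K[G]$.

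The main obstacle, and the reason this is a genuine variant rather than an immediate corollary, is the bookkeeping around Lemma~\ref{lem:connected}: one must be sure that the connectivity conclusions apply not to $\sigma$ alone but to the particular sub-partition $\pi$ actually used, and one must verify that the inductive hypothesis supplying the premises of Lemma~\ref{lem:connected}(a),(b) is genuinely available — i.e.\ that the pairs $(m,N)$, $m<M$, and $(M,n)$, $n<N$, are all covered by the union in~\eqref{eq:ULIE1cup} and thus are handled before $(M,N)$ in the induction. Once the induction is organized so that these premises are in force, the rest is a line-by-line transcription of the proof of Theorem~\ref{thm:KapULIE} with $\ULIE^{(-)}_K$ replaced by $\ULIE^{(1-)}_K$.
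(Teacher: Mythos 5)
Your proposal is correct and follows essentially the same route as the paper: induct on the rank pair so that the hypotheses of Lemma~\ref{lem:connected} are available, note that the lemma applies to the minimally realizable sub-partition $\pi\le\sigma$ (not just to $\sigma$), and then repeat the proof of Theorem~\ref{thm:KapULIE} with $\ULIE^{(-)}_K$ replaced by $\ULIE^{(1-)}_K$, handling the $M=N$ case where the partition forces $a=b$ exactly as there. The paper's own proof is just a compressed statement of this same argument.
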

\begin{proof}
Arguing first by induction on $M+N$, we may assume that Kaplansky's Direct Finiteness Conjecture holds over $K$
for all
rank pairs $(m,n)$ appearing in~\eqref{eq:ULIE1cup} provided $(m,n)\ne(M,N)$.
We now proceed as in the proof of Theorem~\ref{thm:KapULIE}, with $(M,N)$ replacing $(m,n)$, 
except we note
that the equality $ab=1$ in $K[\Gamma_\pi]$ implies, grace of Lemma~\ref{lem:connected},
that $\pi$ is row connected and column connected.
\end{proof}

\begin{cor}\label{cor:ULIEK1}
Let $K$ be a division ring.
Then Kaplansky's conjecture holds over $K$ if and only if for all 
ULIE$^{(1-)}_K$ groups $\Gamma$, the group ring $K[\Gamma]$ is directly finite.
\end{cor}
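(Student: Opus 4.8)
The plan is to obtain this as an essentially immediate consequence of Theorem~\ref{thm:KapULIE1}; the real content is already in that theorem (and, underlying it, in Lemma~\ref{lem:connected}), so what remains is only to arrange the quantifiers.

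First I would dispatch the ``only if'' direction, which is trivial: if Kaplansky's conjecture holds over $K$, then by definition $K[G]$ is directly finite for every group $G$, hence in particular for every group lying in some $\ULIE^{(1-)}_K(m,n)$, i.e.\ for every ULIE$^{(1-)}_K$ group.

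For the ``if'' direction I would assume that $K[\Gamma]$ is directly finite for every ULIE$^{(1-)}_K$ group $\Gamma$, that is, for every $\Gamma\in\bigcup_{m,n\ge2}\ULIE^{(1-)}_K(m,n)$. Fix integers $M,N\ge2$. Every group appearing in the union~\eqref{eq:ULIE1cup} is then such a $\Gamma$, so the hypothesis of Theorem~\ref{thm:KapULIE1} is met, and that theorem yields that Kaplansky's Direct Finiteness Conjecture holds over $K$ for the rank pair $(M,N)$. Letting $M$ and $N$ vary, the conjecture holds over $K$ for all rank pairs $(m,n)$ with $m,n\ge2$; and since a rank pair in which one of the two ranks equals $1$ is trivial (a rank-one element $rg\in K[G]$ is invertible with inverse $r^{-1}g^{-1}$, so any one-sided inverse of it is automatically two-sided), this is precisely the statement that $K[G]$ is directly finite for all groups $G$, i.e.\ that Kaplansky's conjecture holds over $K$.

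There is no genuine obstacle here; the only point to watch is the bookkeeping with quantifiers --- one must observe that the single hypothesis ``$K[\Gamma]$ directly finite for all ULIE$^{(1-)}_K$ groups'' supplies, for \emph{each} pair $(M,N)$ simultaneously, direct finiteness of every group in the finite union $\bigcup_{2\le m\le M,\,2\le n\le N}\ULIE^{(1-)}_K(m,n)$ that occurs in Theorem~\ref{thm:KapULIE1}, which is clear since that union is contained in $\bigcup_{m,n\ge2}\ULIE^{(1-)}_K(m,n)$. In particular no further induction is needed in the corollary, the induction on $M+N$ having already been carried out inside the proof of Theorem~\ref{thm:KapULIE1}.
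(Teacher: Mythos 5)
Your proof is correct and is exactly the argument the paper intends: the corollary is an immediate consequence of Theorem~\ref{thm:KapULIE1} (quantifying over all rank pairs $(M,N)$, with the trivial converse direction and the trivial rank-one case). No further comment is needed.
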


From the calculations reported in section~\ref{sec:calcs}, we now have the following:
\begin{prop}\label{prop:DFCcases}
Let $m$ and $n$ be odd integers with either (a) $\min(m,n)=3$ and $\max(m,n)\le11$ or (b) $m=n=5$.
Then Kaplansky's Direct Finiteness Conjecture holds over the field $\Fb_2$ of two elements for rank pair $(m,n)$.
\end{prop}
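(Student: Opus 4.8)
The plan is to deduce this from Corollary~\ref{cor:ULIEK1} (equivalently, the finite-rank version Theorem~\ref{thm:KapULIE1}), which reduces the Direct Finiteness Conjecture over $\Fb_2$ for a rank pair $(m,n)$ to checking direct finiteness of $\Fb_2[\Gamma]$ for every group $\Gamma$ lying in $\bigcup_{2\le m'\le m,\,2\le n'\le n}\ULIE^{(1-)}_{\Fb_2}(m',n')$. First I would record that over $\Fb_2$ this class is quite restricted: by Remark~\ref{rem:pairpartitions} the minimally realizable partitions are precisely those consisting of two-element blocks together with the singleton $\{(0,0)\}$, and their existence forces $m$ and $n$ to be odd. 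Hence for case~(a) only the rank pairs $(3,n)$ with $n$ odd, $3\le n\le 11$, contribute, and for case~(b) only the pair $(5,5)$; by Remark~\ref{rem:Gop} the two coordinates play symmetric roles, so there is no need to treat $(n,3)$ separately.

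Next I would carry out (as reported in Section~\ref{sec:calcs}) the explicit enumeration: for each relevant rank pair list all partitions $\pi$ of $\{0,\dots,m-1\}\times\{0,\dots,n-1\}$ into pairs plus the singleton $\{(0,0)\}$, discard the degenerate ones (using Remark~\ref{rem:degen} to prune early) and those that fail row- or column-connectedness, and for $m=n$ further discard the $\Gamma_\pi$ for which $\pi$ forces $a=b$, as permitted in the definition of $\ULIE^{(1-)}$. This leaves a finite list of finitely presented groups $\Gamma_\pi$, which I would then simplify and identify up to isomorphism. The crucial feature to extract from the calculation is that every group on the resulting list is sofic — in practice because each $\Gamma_\pi$ turns out to be a concretely identifiable group (trivial, finite, abelian, or more generally amenable, or else built from amenable pieces by the free-product and amalgamation constructions under which soficity is closed, as recalled in the introduction).

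Given soficity of each $\Gamma_\pi$ on the list, the theorem of Elek and Szab\'o~\cite{ES04} gives that $\Fb_2[\Gamma_\pi]$ is directly finite (indeed stably finite). Feeding this into Theorem~\ref{thm:KapULIE1} then yields that Kaplansky's Direct Finiteness Conjecture holds over $\Fb_2$ for the rank pair $(m,n)$, for each $(m,n)$ as in (a) and (b), which is the assertion of the proposition.

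The main obstacle is the enumeration and certification step rather than any subtle algebra: the number of pairings of an $m\times n$ grid grows very quickly, so the search must be organized with effective pruning (degeneracy, connectedness, and isomorphism reduction), and — more importantly — for each surviving group one must \emph{reliably recognize} that it is sofic, e.g.\ by exhibiting a presentation that collapses to a manifestly amenable group or by matching it to a known sofic group. The pair $(5,5)$ is the computationally heaviest case, and it is there that the feasibility of the search and the verification that every output group is sofic require the most care.
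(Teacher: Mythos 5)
Your proposal is correct and follows essentially the same route as the paper: reduce to the computed list of ULIE groups over $\Fb_2$ described in Section~\ref{sec:calcs}, observe that each such group is amenable or (for the two non-amenable $5\times5$ groups) an amalgamated free product of amenable groups over an amenable subgroup and hence sofic, and conclude direct finiteness of each $\Fb_2[\Gamma_\pi]$ from Elek--Szab\'o. The only cosmetic difference is that you route the reduction through Theorem~\ref{thm:KapULIE1} (connected partitions, together with all smaller rank pairs) whereas the paper applies Theorem~\ref{thm:KapULIE} directly to $\ULIE^{(-)}_{\Fb_2}(m,n)$; this does not affect the substance of the argument.
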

\begin{proof}
All of the ULIE$^{(-)}_{\Fb_2}(m,n)$ groups $\Gamma_\pi$ have been computed and they are described in section~\ref{sec:calcs}.
In case~(a), the are all amenable while in case~(b), all are amenable except for two, 
which by the main result of~\cite{CD}, \cite{ES11} and \cite{P11}, are seen to be sofic.
Hence, by~\cite{ES04}, each group ring $\Fb_2[\Gamma_\pi]$ is directly finite.
Now Theorem~\ref{thm:KapULIE} applies.
\end{proof}

\section{An infinite family of non-amenable ULIE groups}
\label{sec:InfFam}

We describe infinitely many nondegenerate partitions that yield non-amenable ULIE$_{\Fb_2}^{(1)}$ groups.
These groups are, however, known to be sofic.

For an integer $n\ge2$, we describe a pair partition $\pi$ of the set
\[
\big(\{0,1,\ldots,2n\}\times\{0,1,\ldots,2n\}\big)\backslash\{(0,0)\}.
\]
The pair partition is described on a $(2n+1)\times(2n+1)$ grid (rows and columns numbered from $0$ to $2n$)
by (a) drawing lines between positions and 
(b) writing numbers in the positions.
If there is a straight
line between positions $(i,j)$ and $(k,\ell)$
or if the same number is written in positions $(i,j)$ and $(k,\ell)$,
then this indicates $\{(i,j),(k,\ell)\}\in\pi$.
See Figure~\ref{fig:template}.
\setlength{\unitlength}{2em}
\begin{figure}[b]
\caption{A $(2n+1)\times(2n+1)$ partial pair partition.}
\label{fig:template}
\begin{picture}(11,11.5)(0,-11.5)

\put(1,-1){\line(0,-1){10}} 
\put(2,-1){\line(0,-1){1}}
\put(1,-1){\line(1,0){10}}
\put(1,-2){\line(1,0){1}}

\multiputlist(2.7,-0.5)(1,0)
 {$b_1$,$b_2$,$b_3$,$b_4$,$b_5$,$b_6$,$\cdots$,$b_{2n-1}$,$b_{2n}$} 
\multiputlist(0.2,-2.5)(0,-1)
 {$a_1$,$a_2$,$a_3$,$a_4$,$a_5$,$a_6$,$\vdots$,$a_{2n-1}$,$a_{2n}$} 

\put(1.5,-1.6){$0$} 
\multiputlist(2.7,-1.5)(1,0){$1$,$2$} 
\multiputlist(1.5,-2.5)(0,-1){$1$,$2$} 

\put(2.7,-2.5){\line(1,-1){1}} 
\put(2.7,-3.5){\line(1,1){1}}

\matrixput(4.7,-4.5)(2,0){2}(0,-2){2}{\line(1,-1){1}}
\matrixput(4.7,-5.5)(2,0){2}(0,-2){2}{\line(1,1){1}}
\multiput(8.7,-4.5)(0,-1){4}{$\cdots$}
\multiput(9.7,-4.5)(0,-2){2}{\line(1,-1){1}}
\multiput(9.7,-5.5)(0,-2){2}{\line(1,1){1}}

\multiput(4.7,-8.5)(1,0){4}{$\vdots$}
\multiput(9.7,-8.5)(1,0){2}{$\vdots$}

\multiput(4.7,-9.5)(2,0){2}{\line(1,-1){1}}
\multiput(4.7,-10.5)(2,0){2}{\line(1,1){1}}
\multiput(8.7,-9.5)(0,-1){2}{$\cdots$}
\put(9.7,-9.5){\line(1,-1){1}}
\put(9.7,-10.5){\line(1,1){1}}

\dottedline{0.2}(4.1,-1)(4.1,-11)
\dottedline{0.2}(1,-4.1)(11,-4.1)
\end{picture}
\end{figure}
For example, the cross in the upper left corner indicates the pairings $(1,1)\sim(2,2)$ and $(1,2)\sim(2,1)$.
Also, the numbers $1$ and $2$ in the picture indicate,
respectively, the pairings $(0,1)\sim(1,0)$ and $(0,2)\sim(2,0)$.
The long $3\times(2n-2)$ block on the upper right and the tall $(2n-2)\times 3$ block on the lower left,
both partially outlined with dotted lines,
are still to be filled in.
We have written the group elements above and to the left of the grid, to remind us that a pairing between
positions $(i,j)$ and $(k,\ell)$ leads to the relation $a_ib_j=a_kb_\ell$ in the group $\Gamma_\pi$.

The finitely presented group with generators $a_1,\ldots,a_{2n},b_1,\ldots,b_{2n}$
and relations dictated by the pairings indicated in Figure~\ref{fig:template}
is isomorphic to the group with
presentation
\begin{equation}\label{eq:ppmGp}
\langle s,t,a_1,a_3,a_5\ldots,a_{2n-1},b_3,b_5,\ldots,b_{2n-1}\mid
s^2=t^2=[a_1,s]=1\rangle,
\end{equation}
where $[x,y]$ means the multiplicative commutator $xyx^{-1}y^{-1}$,
with the isomorphism implemented by \pagebreak[1]
\begin{alignat}{2}
a_j&\mapsto a_j &\quad &(j\text{ odd}) \notag \\
b_j&\mapsto b_j &&(j\text{ odd},\,j\ge3) \notag  \pagebreak[1] \\
a_2&\mapsto a_1s \label{eq:gpid3} \\
b_2&\mapsto a_1s \label{eq:gpid4} \\
a_k&\mapsto a_{k-1}\,t &&(k\text{ even},\,k\ge4) \label{eq:gpid5} \\
b_k&\mapsto t\,b_{k-1} &&(k\text{ even},\,k\ge4) \label{eq:gpid6} 
\end{alignat}
We relabel the group elements to incorporate the identifications~\eqref{eq:gpid3}--\eqref{eq:gpid6}.
Thus, the top row in Figure~\ref{fig:template} becomes
\[
a_1\quad a_1s\quad b_3\quad tb_3\quad b_5\quad tb_5\quad \cdots\quad b_{2n-1}\quad tb_{2n-1}
\]
while the left--most column in Figure~\ref{fig:template} becomes (the transpose of)
\[
a_1\quad a_1s\quad a_3\quad a_3t\quad a_5\quad a_5t\quad \cdots\quad a_{2n-1}\quad a_{2n-1}t
\]

Now we fill in the remaining pairings to create a complete pair partition.
The upper right--hand $3\times(2n-2)$ block gets filled in as indicated in Figure~\ref{fig:upperrightblock},
\setlength{\unitlength}{3em}
\begin{figure}[b]
\caption{The numbers we put into the upper right $3\times(2n-2)$ block.}
\label{fig:upperrightblock}
\begin{picture}(9.5,4)(0,-4)

\put(1,-1){\line(0,-1){3}} 
\put(1,-1){\line(1,0){8.5}}

\multiputlist(1.7,-0.5)(1.2,0)
 {$b_3$,$tb_3$,$b_5$,$tb_5$,$\cdots$,$b_{2n-1}$,$tb_{2n-1}$} 
\multiputlist(0.5,-2.5)(0,-1)
 {$a_1$,$a_1s$} 

\multiputlist(1.6,-1.5)(1.2,0){$3$,$4$,$5$,$6$,$\cdots$,$2n-1$,$2n$} 
\multiputlist(1.6,-2.5)(1.2,0){$2n+1$,$2n+2$,$2n+3$,$2n+4$,$\cdots$,$4n-3$,$4n-2$} 
\multiputlist(1.6,-3.5)(1.2,0){$4n-1$,$4n$,$4n+1$,$4n+2$,$\cdots$,$6n-5$,$6n-4$} 
\end{picture}
\end{figure}
while the lower left--hand $(2n-2)\times3$ block gets filled in as indicated in Figure~\ref{fig:lowerleftblock}.
\begin{figure}[hb]
\caption{The numbers we put into the lower left $(2n-2)\times3$ block.}
\label{fig:lowerleftblock}
\begin{picture}(4.5,8)(0,-8)

\put(1,-1){\line(1,0){3.5}} 
\put(1,-1){\line(0,-1){7}}

\multiputlist(2.8,-0.5)(1.2,0)
 {$a_1$,$a_1s$} 
\multiputlist(0.5,-1.5)(0,-1)
 {$a_3$,$a_3t$,$a_5$,$a_5t$,$\vdots$,$a_{2n-1}$,$a_{2n-1}t$} 

\multiputlist(1.6,-1.5)(1.2,0){$2n+1$,$4n-1$,$3$}
\multiputlist(1.6,-2.5)(1.2,0){$2n+2$,$4n$,$4$}
\multiputlist(1.6,-3.5)(1.2,0){$2n+3$,$4n+1$,$5$}
\multiputlist(1.6,-4.5)(1.2,0){$2n+4$,$4n+2$,$6$}
\multiputlist(1.6,-5.5)(1.2,0){$\vdots$,$\vdots$,$\vdots$}
\multiputlist(1.6,-6.5)(1.2,0){$4n-3$,$6n-5$,$2n-1$}
\multiputlist(1.6,-7.5)(1.2,0){$4n-2$,$6n-4$,$2n$}
\end{picture}
\end{figure}
This completes the pair partition $\pi$
of $(\{0,1,\ldots,2n\}\times\{0,1,\ldots,2n\})\backslash\{(0,0)\}$.
The group $\Gamma_\pi$ equals the quotient of the group~\eqref{eq:ppmGp}
by the additional relations corresponding the the numbers $3$ to $6n-4$, according
to Figures~\ref{fig:upperrightblock} and~\ref{fig:lowerleftblock}.

Let $R_j$ denote the relation implied by the pairing indicated
by the number $j$ in Figures~\ref{fig:upperrightblock} and~\ref{fig:lowerleftblock}.
We have
\begin{alignat*}{2}
R_3:&\qquad & b_3&=a_3a_1s \\
R_4:& & tb_3&=a_3ta_1s \\
\vdots\quad&&&\quad\vdots \\
R_{2n-1}:& & b_{2n-1}&=a_{2n-1}a_1s \\
R_{2n}:& & tb_{2n-1}&=a_{2n-1}ta_1s.
\end{alignat*}
These are equivalent to the relations
\begin{align}
b_j&=a_ja_1s \label{eq:c1} \\
[t,a_j]&=1 \label{eq:c2}
\end{align}
for all $j$ odd, $3\le j\le 2n-1$.
For this same range of $j$ values, relations $R_{2n+1}$ to $R_{4n-2}$ give us
\begin{align}
a_1b_j&=a_j \label{eq:c8} \\
a_1tb_j&=a_jt, \label{eq:c9}
\end{align}
which, using~\eqref{eq:c1} and~\eqref{eq:c2} and $[a_1,s]=s^2=1$, are seen to be equivalent to
\begin{align}
a_j^{-1}a_1a_j&=a_1^{-1}s \label{eq:c3} \\
[t,a_1]&=1. \label{eq:c4}
\end{align}
Again for the same range of $j$ values, $R_{4n-1}$ to $R_{6n-4}$ give us
\begin{align}
a_1sb_j&=a_ja_1 \label{eq:c5} \\
a_1stb_j&=a_jta_1. \label{eq:c6}
\end{align}
Using~\eqref{eq:c1} and $[a_1,s]=s^2=1$, the first of these is equivalent to 
\begin{equation}\label{eq:c7}
a_j^{-1}a_1sa_j=s,
\end{equation}
while using also~\eqref{eq:c2} and~\eqref{eq:c4}, we see that~\eqref{eq:c6} yields
\[
[t,s]=1.
\]
Taking~\eqref{eq:c3} and~\eqref{eq:c7} together gives
\begin{equation*}
a_j^{-1}sa_j=a_1,
\end{equation*}
which implies $a_1^2=1$.

Therefore, in the group $\Gamma_\pi$, the relations
\begin{align}
s^2&=t^2=[a_1,s]=[a_1,t]=[s,t]=1, \label{eq:r1} \\
&([a_j,t]=1)_{3\le j\le 2n-1,\;j\text{ odd}}, \label{eq:r2} \\
&(a_j^{-1}sa_j=a_1)_{3\le j\le 2n-1,\;j\text{ odd}}, \label{eq:r3} \\
&(a_j^{-1}a_1a_j=a_1s)_{3\le j\le 2n-1,\;j\text{ odd}} \label{eq:r4} 
\end{align}
hold, and we easily see that they imply the relations~\eqref{eq:c1}, \eqref{eq:c2},
\eqref{eq:c8}, \eqref{eq:c9}, \eqref{eq:c5} and~\eqref{eq:c6}.
Thus, $\Gamma_\pi$ has presentation
with generators $s,t,a_1,a_3,\ldots,a_{2n-1}$
and relations~\eqref{eq:r1}--\eqref{eq:r4}.

We see that the relations \eqref{eq:r1}--\eqref{eq:r4} are equivalently described by:
\begin{enumerate}[(i)]
\item $t$ is in the center
\item the subgroup $H$ generated by $s$ and $a_1$ is isomorphic to $\bZ_2\times\bZ_2$
\item conjugation by $a_j$ for every $j\in\{3,5,\ldots,2n-1\}$
implements the same automorphism $\alpha$, of $H$, which is the automorphism
of order $3$ that cycles the nontrivial elements of $H$.
\end{enumerate}
The group $\Gamma_\pi$ is, therefore, isomorphic to
\begin{equation}\label{eq:nonamenGammapi}
\overset{t}{\bZ_2}\times
\bigg((\overset{s}{\bZ_2}\times\overset{a_1}{\bZ_2})\rtimes_{\alpha*\cdots*\alpha}
(\underset{n-1\text{ times}}{\underbrace{\overset{a_3}{\bZ}*\overset{a_5}{\bZ}*\cdots*\overset{a_{2n-1}}{\bZ}}})\big),
\end{equation}
where the symbols appearing above the cyclic groups indicate the corresponding generators of the groups.
When $n\ge3$, this group is non-amenable.
The semidirect product group appearing above is isomorphic to the free product of $n-1$ copies of the amenable group
\[
(\bZ_2\times\bZ_2)\rtimes_\alpha\bZ
\]
with amalgamation over $\bZ_2\times\bZ_2$.
Therefore, the group $\Gamma_\pi$ is sofic, (by the main result of~\cite{CD}, \cite{ES11} and \cite{P11}).

\section{The Invertibles Conjecture}
\label{sec:InvTor}

See Conjecture~\ref{conj:IT} for a statement of the Invertibles Conjecture.
Here are some related finer considerations.

\begin{defi}
Let $K$ be a division ring.
We will say that the {\em Invertibles Conjecture holds over} $K$ if $K[G]$ contains no
one--sided invertible elements of rank $>1$ for all torsion--free groups $G$.
For integers $m,n\ge2$, we will say that  the {\em Invertibles Conjecture holds for rank pair} $(m,n)$ if
for all division rings $K$ and all torsion--free groups $G$, $K[G]$ contains no two elements $a$ and $b$
having ranks $m$ and $n$, respectively, such that $ab=1$.
We will say that the {\em Invertibles Conjecture holds for rank} $m$ if it holds for rank pairs $(m,n)$,
for all integers $n\ge2$, namely, if the existence of a right--invertible element of rank $m$ in a group algebra
$K[G]$ implies $G$ has torsion.
(By the method described in Remark~\ref{rem:Gop},
we may replace ``right--invertible'' by ``one--sided invertible''
in the previous sentence.)
Intersections of these properties (e.g., over $K$ for rank pair $(m,n)$) have the obvious meaning.
\end{defi}

As a consequence of Proposition~\ref{prop:rank2}, we have:
\begin{thm}\label{thm:InvTorrank2}
The Invertibles Conjecture holds for rank $2$.
\end{thm}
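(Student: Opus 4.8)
The plan is to read the statement straight off Proposition~\ref{prop:rank2}, so essentially no new work is required beyond unwinding the definition of ``the Invertibles Conjecture holds for rank $2$''.

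First I would fix a division ring $K$, a group $G$, and elements $a,b\in K[G]$ with $\rank(a)=2$, $\rank(b)=n\ge2$, and $ab=1$; the goal is to show that $G$ has torsion. Since $a$ is right invertible and of rank $2$, Proposition~\ref{prop:rank2} applies: it tells us not only that $a$ is (two-sided) invertible, but that it must have the form $a=sh(1-rg)$ with $s,r\in K\backslash\{0\}$, $h,g\in G$, and $g$ of finite order $n>1$ with $r^n\ne1$. In particular $g$ is a nontrivial element of finite order in $G$, i.e., $G$ has torsion. This establishes the Invertibles Conjecture for every rank pair $(2,n)$ with $n\ge2$, which is exactly the assertion that it holds for rank $2$.

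A couple of small points I would make explicit. Proposition~\ref{prop:rank2} is stated for ``one-sided invertible'' elements, so the argument covers the left-invertible case as well; alternatively one could pass to $G^{\text{\it op}}$ as in Remark~\ref{rem:Gop}. Also, the normalization steps at the start of Section~\ref{sec:Obs} (reducing to $a_0=b_0=1$ and $r_0=1$) are not needed here, since Proposition~\ref{prop:rank2} already applies to an arbitrary rank $2$ one-sided invertible element. There is no genuine obstacle: the entire content lies in Proposition~\ref{prop:rank2}, whose proof (the conditional-expectation trick via Lemma~\ref{lem:supports}) has already been carried out, and the theorem is just a restatement of its conclusion in the language of torsion.
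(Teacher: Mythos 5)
Your proposal is correct and is exactly the paper's argument: the theorem is stated there as an immediate consequence of Proposition~\ref{prop:rank2}, whose characterization of rank $2$ one--sided invertibles as $sh(1-rg)$ with $g$ of finite order $>1$ directly produces torsion in $G$. The extra remarks about the left--invertible case and $G^{\text{\it op}}$ are fine but add nothing beyond what the paper already leaves implicit.
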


We first describe the smallest normal subgroup whose corresponding quotient is torsion--free.
This is surely well known, but it doesn't take long.

\begin{defi}\label{def:Ntor}
Given a group $\Gamma$, let $N_\tor^{(1)}(\Gamma)$ be the smallest normal subgroup of $\Gamma$
that contains all torsion elements of $\Gamma$.
We now recursively define normal subgroups $N_{\tor,n}$ of $\Gamma$, $n\ge1$, by
letting $N_{\tor,1}=N_\tor^{(1)}(\Gamma)$ and,
given $N_{\tor,n}$, letting $\phi_n:\Gamma\to\Gamma/N_{\tor,n}$ be the quotient map and
$N_{\tor,n+1}=\phi_n^{-1}(N_\tor^{(1)}(\Gamma/N_{\tor,n}))$.
Let $N_\tor(\Gamma)=\bigcup_{n=1}^\infty N_{\tor,n}$.
Clearly, $N_\tor(\Gamma)$ is a normal subgroup of $\Gamma$.
\end{defi}

\begin{prop}\label{prop:Ntor}
If $\Gamma/N_\tor(\Gamma)$ is nontrivial, then it is torsion--free.
Moreover, if $N$ is a normal subgroup of $\Gamma$ so that $\Gamma/N$ is torsion--free,
then $N_\tor(\Gamma)\subseteq N$.
\end{prop}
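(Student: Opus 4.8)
The plan is to prove the two assertions of Proposition~\ref{prop:Ntor} in turn, using the construction of $N_\tor(\Gamma)$ as an increasing union and a routine transfinite/inductive argument. First I would record the basic structural facts: each $N_{\tor,n}$ is normal in $\Gamma$ (immediate from the definition, since preimages of normal subgroups under quotient maps are normal), the chain $N_{\tor,1}\subseteq N_{\tor,2}\subseteq\cdots$ is increasing (because $N_\tor^{(1)}(\Gamma/N_{\tor,n})$ contains the trivial subgroup, so its preimage contains $N_{\tor,n}$), and therefore $N_\tor(\Gamma)=\bigcup_n N_{\tor,n}$ is a normal subgroup, as already observed in Definition~\ref{def:Ntor}.

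For the first assertion, suppose $gN_\tor(\Gamma)$ is a torsion element of $\Gamma/N_\tor(\Gamma)$, say of order dividing $k$, so $g^k\in N_\tor(\Gamma)$. Since $N_\tor(\Gamma)$ is the union of the increasing chain, there is some $n$ with $g^k\in N_{\tor,n}$. Then in the quotient $\Gamma/N_{\tor,n}$, the image of $g$ has finite order, hence is a torsion element, hence lies in $N_\tor^{(1)}(\Gamma/N_{\tor,n})$; pulling back along $\phi_n$ gives $g\in\phi_n^{-1}(N_\tor^{(1)}(\Gamma/N_{\tor,n}))=N_{\tor,n+1}\subseteq N_\tor(\Gamma)$. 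Thus the only torsion element of $\Gamma/N_\tor(\Gamma)$ is the identity, so if this quotient is nontrivial it is torsion--free.

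For the second assertion, let $N\trianglelefteq\Gamma$ with $\Gamma/N$ torsion--free; I would show $N_{\tor,n}\subseteq N$ for all $n$ by induction on $n$, whence $N_\tor(\Gamma)\subseteq N$. For $n=1$: every torsion element $g$ of $\Gamma$ has torsion (or trivial) image in the torsion--free group $\Gamma/N$, so its image is trivial, i.e.\ $g\in N$; thus $N$ is a normal subgroup containing all torsion elements, so by minimality $N_{\tor,1}=N_\tor^{(1)}(\Gamma)\subseteq N$. For the inductive step, assume $N_{\tor,n}\subseteq N$. Then $N/N_{\tor,n}$ is a normal subgroup of $\Gamma/N_{\tor,n}$ with quotient $(\Gamma/N_{\tor,n})\big/(N/N_{\tor,n})\cong\Gamma/N$, which is torsion--free; by the case $n=1$ applied to the group $\Gamma/N_{\tor,n}$ and its normal subgroup $N/N_{\tor,n}$, we get $N_\tor^{(1)}(\Gamma/N_{\tor,n})\subseteq N/N_{\tor,n}$. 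Applying $\phi_n^{-1}$ gives $N_{\tor,n+1}\subseteq \phi_n^{-1}(N/N_{\tor,n})=N$, completing the induction.

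The only point requiring a little care is the inductive step of the minimality argument, where one needs the third isomorphism theorem to see that $N/N_{\tor,n}$ has torsion--free quotient in $\Gamma/N_{\tor,n}$; everything else is formal. There is no real obstacle here—the construction of $N_\tor(\Gamma)$ as an iterated preimage is precisely engineered so that both statements follow by these two short inductions.
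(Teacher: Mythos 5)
Your proof is correct and follows essentially the same route as the paper: the first claim via writing an element $g^k\in N_\tor(\Gamma)$ as lying in some $N_{\tor,n}$ and concluding $g\in N_{\tor,n+1}$, and the second via induction showing $N_{\tor,n}\subseteq N$ for all $n$. Your inductive step merely makes explicit (via the third isomorphism theorem) what the paper compresses into ``continuing in this way,'' so there is nothing to correct.
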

\begin{proof}
If $g\in\Gamma$ and $g^k\in N_\tor(\Gamma)$ for some $k\in\Nats$, then $g^k\in N_{\tor,n}$ for some $n\in\Nats$,
and, consequently, $g\in N_{\tor,n+1}$, so $g\in N_\tor(\Gamma)$.
This implies the first statement.
If $\Gamma/N$ is torsion--free, then clearly $N_\tor^{(1)}(\Gamma)\subseteq N$.
Now for any $g\in\Gamma$ so that $g^k\in N_\tor^{(1)}(\Gamma)$,
if $g\notin N$, then the class of $g$ would have finite order in $\Gamma/N$
contrary to hypothesis; thus, $N_{\tor,2}\subseteq N$.
Continuing in this way, we see by induction that $N_{\tor,n}\subseteq N$ for all $n$.
So $N_\tor(\Gamma)\subseteq N$.
\end{proof}

\begin{thm}\label{thm:InvTorULIE}
Let $K$ be a division ring and let $m,n\ge2$ be integers.
Then the following are equivalent:
\begin{enumerate}[(i)]
\item The Invertibles Conjecture holds over $K$ for rank pair $(m,n)$.
\item for every
$\ULIE_K(m,n)$--group $\Gamma$ with its canonical generators $1=a_0,a_1,\ldots,a_{m-1}$
and $1=b_0,b_1,\ldots,b_{n-1}$,
letting $\phi:\Gamma\to\Gamma/N_\tor(\Gamma)$ be the quotient map,
we have
$\phi(a_i)=\phi(a_{i'})$ for some $0\le i<i'\le m-1$ or $\phi(b_j)=\phi(b_{j'})$
for some $0\le j<j'\le n-1$.
\end{enumerate}
\end{thm}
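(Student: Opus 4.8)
The plan is to prove the two implications separately, in both cases using the translation between one-sided invertible pairs in group rings and the universal groups $\Gamma_\pi$ (exactly as in the proof of Theorem~\ref{thm:KapULIE}), together with Proposition~\ref{prop:Ntor}, which identifies $N_\tor(\Gamma)$ as the smallest normal subgroup with torsion-free quotient.

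\emph{Proof of (i)$\implies$(ii).} Let $\Gamma=\Gamma_\pi\in\ULIE_K(m,n)$, so $\pi$ is nondegenerate and minimally realizable over $K$, say realizable with nonzero elements $r_0,\dots,r_{m-1},s_0,\dots,s_{n-1}$ of $K$. Set $a=\sum_{i}r_ia_i$ and $b=\sum_{j}s_jb_j$ in $K[\Gamma_\pi]$; by the defining relations~\eqref{eq:Gammapi} together with~\eqref{eq:rssum} we have $ab=1$. Let $\phi\colon\Gamma_\pi\to\bar\Gamma:=\Gamma_\pi/N_\tor(\Gamma_\pi)$ be the quotient homomorphism, extended linearly to $K[\Gamma_\pi]\to K[\bar\Gamma]$; then $\phi(a)\phi(b)=1$. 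Suppose, toward a contradiction with (ii), that $\phi(a_0),\dots,\phi(a_{m-1})$ are pairwise distinct and $\phi(b_0),\dots,\phi(b_{n-1})$ are pairwise distinct. Since the $r_i$ and $s_j$ are nonzero, no coefficients collapse, so $\phi(a)$ has rank exactly $m$ and $\phi(b)$ has rank exactly $n$; in particular $\bar\Gamma$ is nontrivial, hence torsion-free by Proposition~\ref{prop:Ntor}. This produces, in the group ring of a torsion-free group, a pair of elements of ranks $m$ and $n$ whose product is $1$, contradicting (i). Therefore (ii) holds. (If $\bar\Gamma$ is the trivial group then $\phi(a_0)=\phi(a_1)$ already, so (ii) holds a fortiori.)

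\emph{Proof of (ii)$\implies$(i).} Suppose, toward a contradiction, that $G$ is torsion-free and $a,b\in K[G]$ have ranks $m$ and $n$ with $ab=1$. Normalizing as in Section~\ref{sec:Obs} and in the proof of Theorem~\ref{thm:KapULIE}, we may assume $a=r_0a_0+\cdots+r_{m-1}a_{m-1}$ and $b=s_0b_0+\cdots+s_{n-1}b_{n-1}$ with $a_0=b_0=1$. Let $\sigma$ be the cancellation partition of $(a,b)$; then $\sigma$ is realizable with $r_0,\dots,r_{m-1},s_0,\dots,s_{n-1}$, and, via the homomorphism $\Gamma_\sigma\to G$ sending each $a_i$ and $b_j$ to the corresponding support element, $\sigma$ is nondegenerate. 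Choose a partition $\pi\le\sigma$ that is minimally realizable with $r_0,\dots,r_{m-1},s_0,\dots,s_{n-1}$ (such $\pi$ exists, as in the proof of Theorem~\ref{thm:KapULIE}); then $\pi$ is nondegenerate and minimally realizable over $K$, so $\Gamma_\pi\in\ULIE_K(m,n)$. Applying (ii), with $\phi\colon\Gamma_\pi\to\Gamma_\pi/N_\tor(\Gamma_\pi)$, we obtain say $\phi(a_i)=\phi(a_{i'})$ for some $0\le i<i'\le m-1$ (the case of the $b_j$'s is symmetric). Composing $\Gamma_\pi\to\Gamma_\sigma$ (the quotient map coming from $\pi\le\sigma$) with the homomorphism $\Gamma_\sigma\to G$ gives a homomorphism $\theta$ carrying each $a_k$ to the $k$-th support element of $a$; since $G$ is torsion-free, Proposition~\ref{prop:Ntor} gives $N_\tor(\Gamma_\pi)\subseteq\ker\theta$, so $\theta$ factors through $\Gamma_\pi/N_\tor(\Gamma_\pi)$, and therefore the $i$-th and $i'$-th support elements of $a$ coincide in $G$, contradicting their distinctness. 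Hence no such pair $a,b$ exists, and (i) holds.

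The genuinely routine parts — that the cancellation partition satisfies~\eqref{eq:rssum}, that $ab=1$ holds in $K[\Gamma_\pi]$, the normalization of $a$ and $b$, and the existence of the minimally realizable $\pi\le\sigma$ — are all handled exactly as in Section~\ref{sec:Obs} and the proof of Theorem~\ref{thm:KapULIE}. The one place calling for a little care is the rank bookkeeping in (i)$\implies$(ii): one must confirm that if the images of the $a_i$ (resp.\ $b_j$) stay distinct then the ranks of $\phi(a),\phi(b)$ are still exactly $m,n$, so that hypothesis (i) really applies, and one must dispose of the trivial-quotient case. I do not expect a serious obstacle here; the conceptual content is just that Proposition~\ref{prop:Ntor} makes the relations of a ULIE group detect torsion-freeness of any target group, which is precisely what couples the two conditions.
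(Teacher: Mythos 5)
Your proposal is correct and follows essentially the same route as the paper: both directions use the cancellation partition, a minimally realizable $\pi\le\sigma$, the homomorphism $\Gamma_\pi\to G$ whose kernel contains $N_\tor(\Gamma_\pi)$ by Proposition~\ref{prop:Ntor}, and the observation that distinctness of the images of the generators keeps the ranks of $\phi(a),\phi(b)$ equal to $m,n$. The only difference is cosmetic (contradiction versus contrapositive phrasing), so there is nothing to add.
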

\begin{proof}
For (ii)$\implies$(i), 
suppose the Invertibles Conjecture over $K$ for rank pair $(m,n)$ fails.
Then there is a torsion--free group $G$ such that 
$K[G]$ contains elements $\at$ of rank $m$ and $\bt$ of rank $n$ such that $\at\bt=1$.
After allowable modifications, we may without loss of generality write
$\at=1+r_1\at_1+\cdots r_{m-1}\at_{m-1}$
and $\bt=s_01+s_1\bt_1+\cdots+s_{n-1}\bt_{n-1}$
for some $\at_1,\ldots,\at_{m-1}$ distinct, nontrivial elements
of $G$ and $\bt_1,\ldots,\bt_{n-1}$ distinct, nontrivial elements of $G$ and for
$r_1,\ldots,r_{m-1},s_0,\ldots,s_{n-1}\in K\backslash\{0\}$.
Letting $\sigma$ be the cancellation partition for $\at\bt=1$ and taking $\pi\le\sigma$ that
is minimally realizable for $r_0,\ldots,r_{m-1},s_0,\ldots,s_{n-1}$, we have a group homomorphism
from the $\ULIE_K(m,n)$ group $\Gamma_\pi$ into
$G$ 
that sends canonical generators $a_i$ to $\at_i$ and $b_i$ to $\bt_i$.
Of, course, we have $ab=1$ in $K[\Gamma_\pi]$, where $a=r_0a_0+\cdots+r_{m-1}a_{m-1}$ and $b=s_0b_0+\cdots+s_{n-1}b_{n-1}$.
Since $G$ is torsion--free, by Proposition~\ref{prop:Ntor}, the kernel of the above homomorphism contains
$N_\tor(\Gamma_\pi)$.
Since $1,\at_1,\ldots,\at_{m-1}$ are distinct and $1,\bt_1,\ldots,\bt_{n-1}$ are distinct it follows that the
images of $1,a_1,\ldots,a_{m-1}$ in the quotient $\Gamma/N_\tor(\Gamma_\pi)$ are distinct,
as are the images of of $1,b_1,\ldots,b_{n-1}$,
and (ii) fails.

For (i)$\implies$(ii), suppose that for some $\ULIE_K(m,n)$--group $\Gamma=\Gamma_\pi$
and for $\phi$ the quotient map to $\Gamma/N_\tor(\Gamma)$,
the elements $1,\phi(a_1),\ldots,\phi(a_{m-1})$ are distinct
and $1,\phi(b_1),\ldots,\phi(b_{n-1})$ are distinct.
Now the partition $\pi$ is realizable with some $r_0,\ldots,r_{m-1},s_0,\ldots,s_{n-1}\in K\backslash\{0\}$,
so letting $a=r_01+r_1a_1+\cdots+r_{m-1}a_{m-1}$ and $b=s_01+s_1b_1+\cdots+s_{n-1}b_{n-1}$ in
$K[\Gamma]$, we have $ab=1$.
Extending the quotient map $\phi$ linearly to a ring homomorphism $K[\Gamma]\to K[\Gamma/N_\tor(\Gamma)]$,
we get that $\phi(a)$ has rank $m$ and $\phi(b)$ has rank $n$ and $\phi(a)\phi(b)=1$.
In particular, $\Gamma/N_\tor(\Gamma)$ is nontrivial.
By Proposition~\ref{prop:Ntor}, it is torsion--free.
So the Invertibles Conjecture fails over $K$ for rank pair $(m,n)$.
\end{proof}

\begin{remark}\label{rem:ICabelian}
It is well known and easy to show that for a torsion--free abelian group $G$ and $K$ a division ring,
$K[G]$ has no invertible elements of rank strictly greater than $1$.
Thus, the setting of Theorem~\ref{thm:InvTorULIE}, if $\Gamma/N_\tor(\Gamma)$ is abelian,
then $1,\phi(a_1),\ldots,\phi(a_{m-1})$ cannot be distinct.
\end{remark}

\begin{example}
For the non-amenable ULIE groups $\Gamma_\pi$ considered at~\eqref{eq:nonamenGammapi} in Section~\ref{sec:InfFam},
we easily see $\Gamma/N_\tor(\Gamma)$ is a copy of the free group on $n-1$ generators, but 
the quotient map sends $a_1$ to the identity.
\end{example}

If, instead of considering each rank pair $(m,n)$ individually, we start small and increase one rank
at a time, then we can get away with considering a smaller set of partitions and corresponding ULIE groups.

Keeping in mind the special role of $(0,0)$ in
\begin{equation}\label{eq:set2nd}
\{0,\ldots,m-1\}\times\{0,\ldots,n-1\}
\end{equation}
as pertains to ULIE groups, we make the following definition.
\begin{defi}\label{def:simple}
Let $m,n\ge2$ be integers and let
$\pi$ be a partition of~\eqref{eq:set2nd}.
An {\em invariant subgrid} of $\pi$ is a pair $(R,C)$ with $0\in R\subset\{0,1,\ldots,m-1\}$
and $0\in C\subset\{0,1,\ldots,n-1\}$, $|R|\ge2$ and $|C|\ge2$, so that whenever $(i,j)\in R\times C$
and $(i,j)\simpi(i',j')\in\{0,\ldots,m-1\}\times\{0,\ldots,n-1\}$, then  $(i',j')\in R\times C$.
The subgrid $(R,C)$ is {\em proper} if either $|R|<m$ or $|C|<n$.
\end{defi}

Note that partitions without proper invariant subgrids must be row and column connected.

\begin{lemma}\label{lem:simple}
Let $K$ be a division ring and let $M,N\ge2$ be integers.
Suppose $G$ is a torsion--free group and suppose $c,d\in K[G]$ having ranks $M$ and $N$, respectively,
both have the identity element of $G$ in their supports and satisfy $cd=1$.
Write $c=r_0c_0+\cdots r_{M-1}c_{M-1}$ and $d=s_0d_0+\cdots s_{N-1}d_{N-1}$ for group elements
$c_i$ and $d_j$ and assume $c_0=d_0=1$.
Let $\sigma$ be the
cancellation partition of $(c,d)$ with respect to these orderings of the supports
and let $\pi\le\sigma$ be any partition that is realizable with
$r_0,\ldots,r_{M-1},s_0,\ldots,s_{N-1}$.
Suppose the Invertibles Conjecture holds over $K$ for all rank pairs $(m,n)$, with
$2\le m\le M$ and $2\le n\le N$ and $(m,n)\ne(M,N)$.
Then $\pi$ has no proper invariant subgrids.
\end{lemma}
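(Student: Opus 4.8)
The plan is to argue by contradiction, adapting the strategy of Lemma~\ref{lem:connected} to the torsion-free setting. Suppose $\pi$ has a proper invariant subgrid $(R,C)$. Form the group $\Gamma_\pi$ with presentation~\eqref{eq:Gammapi}. Since $\pi\le\sigma$ and $\sigma$ is the cancellation partition of $(c,d)$, the assignment $a_i\mapsto c_i$, $b_j\mapsto d_j$ respects every defining relation of $\Gamma_\pi$, so it extends to a group homomorphism $\phi\colon\Gamma_\pi\to G$ and hence to a unital ring homomorphism $K[\Gamma_\pi]\to K[G]$. Putting $a=r_0a_0+\cdots+r_{M-1}a_{M-1}$ and $b=s_0b_0+\cdots+s_{N-1}b_{N-1}$ in $K[\Gamma_\pi]$, realizability of $\pi$ with $r_0,\ldots,r_{M-1},s_0,\ldots,s_{N-1}$ together with the relations of $\Gamma_\pi$ give $ab=1$ in $K[\Gamma_\pi]$, exactly as in the proof of Lemma~\ref{lem:connected}.

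Next I would restrict attention to the subgrid. Set $a'=\sum_{i\in R}r_ia_i$ and $b'=\sum_{j\in C}s_jb_j$ in $K[\Gamma_\pi]$. The invariant subgrid condition says precisely that any set of $\pi$ meeting $R\times C$ is contained in $R\times C$; consequently the sets of $\pi$ contained in $R\times C$ partition $R\times C$. Distributing $a'b'=\sum_{(i,j)\in R\times C}r_is_j\,a_ib_j$, grouping the terms by this partition, using that $a_ib_j=a_{i'}b_{j'}$ in $\Gamma_\pi$ whenever $(i,j)\simpi(i',j')$, and using that for each set $E$ of $\pi$ one has $\sum_{(i,j)\in E}r_is_j=1$ when $(0,0)\in E$ and $=0$ otherwise (realizability), together with $(0,0)\in R\times C$ (as $0\in R$, $0\in C$), one obtains $a'b'=a_0b_0=1$ in $K[\Gamma_\pi]$. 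Here $a'$ has rank $|R|\ge2$ and $b'$ has rank $|C|\ge2$, since the $a_i$ (resp.\ $b_j$) are distinct in $\Gamma_\pi$, their $\phi$-images $c_i$ (resp.\ $d_j$) being distinct.

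The one genuine obstacle is that $\Gamma_\pi$ need not be torsion-free, so the Invertibles Conjecture cannot be applied to it directly; one must descend to a torsion-free quotient without losing rank. Since $G$ is torsion-free, Proposition~\ref{prop:Ntor} gives $N_\tor(\Gamma_\pi)\subseteq\ker\phi$, so $\phi$ factors through $\bar\Gamma:=\Gamma_\pi/N_\tor(\Gamma_\pi)$; as $M\ge2$ and $\phi(a_1)=c_1\ne1=\phi(a_0)$, the group $\bar\Gamma$ is nontrivial, hence torsion-free by Proposition~\ref{prop:Ntor}. The images of $a_0,\ldots,a_{M-1}$ (resp.\ $b_0,\ldots,b_{N-1}$) in $\bar\Gamma$ remain distinct because they still map onto the distinct elements $c_i$ (resp.\ $d_j$) of $G$. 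Hence the images $\bar a'$, $\bar b'$ of $a'$, $b'$ in $K[\bar\Gamma]$ have ranks exactly $|R|$ and $|C|$ and satisfy $\bar a'\bar b'=1$ with $\bar\Gamma$ torsion-free. Because the subgrid is proper, $(|R|,|C|)\ne(M,N)$, while $2\le|R|\le M$ and $2\le|C|\le N$; so by hypothesis the Invertibles Conjecture holds over $K$ for the rank pair $(|R|,|C|)$, contradicting the existence of $\bar a'$ and $\bar b'$. This contradiction proves that $\pi$ has no proper invariant subgrid.
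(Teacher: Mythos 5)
Your proof is correct, and it takes a somewhat different route from the paper's. The paper restricts $\pi$ to the invariant subgrid $R\times C$ (after renumbering so that $R\times C=\{0,\ldots,m-1\}\times\{0,\ldots,n-1\}$), refines the restricted partition to one that is \emph{minimally} realizable with the given coefficients, forms the corresponding $\ULIE_K(m,n)$--group $\Gamma_{\pi''}$ with its homomorphism $\psi$ into $G$, and then invokes the equivalence of Theorem~\ref{thm:InvTorULIE}: since the Invertibles Conjecture holds for the rank pair $(m,n)$ and $N_\tor(\Gamma_{\pi''})\subseteq\ker\psi$, the map $\psi$ must identify two of the $a_i$ or two of the $b_j$, contradicting distinctness of the $c_i$ and $d_j$. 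You instead stay inside the group $\Gamma_\pi$ of the \emph{full} partition, form the truncated elements $a'=\sum_{i\in R}r_ia_i$ and $b'=\sum_{j\in C}s_jb_j$, and check directly (using invariance of the subgrid plus realizability of $\pi$, and noting that blocks meeting $R\times C$ lie in $R\times C$) that $a'b'=1$; you then pass to the torsion--free quotient $\Gamma_\pi/N_\tor(\Gamma_\pi)$, where the generators stay distinct because they map onto the distinct $c_i$, $d_j$ of $G$, and contradict the Invertibles Conjecture for the rank pair $(|R|,|C|)\ne(M,N)$ directly from its definition. Your argument is more self-contained: it needs only Proposition~\ref{prop:Ntor} and the definition of the conjecture, and it bypasses both minimal realizability and Theorem~\ref{thm:InvTorULIE}; the paper's version, by contrast, keeps everything within the ULIE framework and reuses the already-proved equivalence, which fits the way the lemma is then consumed in Theorem~\ref{thm:InvTorULIE2}. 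Both proofs hinge on the same two facts — the subgrid yields a lower-rank right-invertibility relation, and torsion-freeness of $G$ forces $N_\tor$ into the kernel — so the substance is the same even though the packaging differs.
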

\begin{proof}
Suppose for contradiction that $\pi$ has a proper invariant subgrid $(R,C)$.
Without loss of generality we may suppose
$R=\{0,\ldots,m-1\}$ and $C=\{0,\ldots,n-1\}$.
Let $\pi'$ be the restriction of $\pi$ to $R\times C$.
Then $\pi'$ is realizable with $r_0,\ldots,r_{m-1}$
and $s_0,\ldots,s_{n-1}$.
Let $\pi''\le\pi'$ be a partition of $R\times C$ that is minimally realizable with $r_0,\ldots,r_{m-1}$
and $s_0,\ldots,s_{n-1}$.
Let $\Gamma_{\pi''}$ be the corresponding $\ULIE_K(m,n)$--group with its canonical generators
$a_0,\ldots,a_{m-1}$ and $b_0,\ldots,b_{n-1}$.
Then there is a group homomorphism $\psi:\Gamma_{\pi''}\to G$ so that $\psi(a_i)=c_i$ and $\psi(b_j)=d_j$.
Since $G$ is torsion--free, by Proposition~\ref{prop:Ntor}, $N_\tor(\Gamma_{\pi''})\subseteq\ker\psi$.
By hypothesis, the Invertibles Conjecture holds over $K$ for rank pair $(m,n)$.
Thus, by Theorem~\ref{thm:InvTorULIE}, the mapping $\psi$
must identify either two distinct $a_i$ and $a_{i'}$ with each other or two distinct $b_j$ and $b_{j'}$
with each other, which contradicts that $c_0,\ldots,c_{m-1}$ are distinct and $d_0,\ldots,d_{n-1}$
are distinct.
\end{proof}

\begin{defi}\label{def:ULIE2}
Let $K$ be a division ring and let $m,n\ge2$ be integers.
Let $\ULIE^{(2)}_K(m,n)$ be the set of all groups $\Gamma_\pi$ as in Definition~\ref{def:Gammapi}
as $\pi$ runs over all  partitions $\pi$ of the set~\eqref{eq:set2nd} that are nondegenerate,
minimally realizable over $K$ and have no proper invariant subgrids.
We will say that an {\em ULIE$^{(2)}_K$ group} is one that belongs to the set $\bigcup_{m,n\ge2}\ULIE^{(2)}_K(m,n)$.
Similarly, we let $\ULIE^{(2)}(m,n)$ denote the set of all groups $\Gamma_\pi$ as $\pi$ runs over all partitions
$\pi$ of~\eqref{eq:set2nd} that are nondegenerate, minimally realizable (over some division ring) and
have no proper invariant subgrids,
and say that an {\em ULIE$^{(2)}$ group} is one that belongs to the set 
$\bigcup_{m,n\ge2}\ULIE^{(2)}(m,n)$.
\end{defi}

Now Lemma~\ref{lem:simple} gives the following variant of Theorem~\ref{thm:InvTorULIE}:
\begin{thm}\label{thm:InvTorULIE2}
Let $K$ be any nonzero field or division ring and let $M,N\ge2$ be integers.
Suppose that for every group
\begin{equation}\label{eq:ULIE2cup}
\Gamma\in\bigcup_{\substack{2\le m\le M \\2\le n\le N}}\ULIE^{(2)}_K(m,n)
\end{equation}
with its canonical generators $1=a_0,a_1,\ldots,a_{m-1}$
and $1=b_0,b_1,\ldots,b_{n-1}$,
letting $\phi:\Gamma\to\Gamma/N_\tor(\Gamma)$ be the quotient map,
we have
$\phi(a_i)=\phi(a_{i'})$ for some $0\le i<i'\le m-1$ or $\phi(b_j)=\phi(b_{j'})$
for some $0\le j<j'\le n-1$.
Then the Invertibles Conjecture holds over $K$
for rank pair $(M,N)$.
\end{thm}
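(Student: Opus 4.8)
The plan is to imitate the proof of Theorem~\ref{thm:InvTorULIE}, in its contrapositive form, but to feed in Lemma~\ref{lem:simple} in place of the blanket appeal to minimal realizability, and to run an induction on $M+N$ so that the hypotheses of Lemma~\ref{lem:simple} become available.

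First I would set up the induction on $M+N$. When $\min(M,N)=2$ the conclusion is just Theorem~\ref{thm:InvTorrank2} (applied over $K$), so this disposes of the base case $M=N=2$, and in fact of every rank pair with a rank equal to $2$; assume then $M,N\ge3$ and that the theorem is already known for all rank pairs with smaller sum. The point to notice is that every rank pair $(m,n)$ with $2\le m\le M$, $2\le n\le N$ and $(m,n)\ne(M,N)$ satisfies $m+n<M+N$, and that the standing hypothesis of the theorem for $(M,N)$ contains the standing hypothesis for each such $(m,n)$, since the union in~\eqref{eq:ULIE2cup} only shrinks when $M,N$ are decreased. Hence the inductive hypothesis yields: the Invertibles Conjecture holds over $K$ for every rank pair $(m,n)$ with $2\le m\le M$, $2\le n\le N$, $(m,n)\ne(M,N)$.

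Now I would argue by contradiction. Suppose the Invertibles Conjecture over $K$ fails for rank pair $(M,N)$: there are a torsion-free group $G$ and $\at,\bt\in K[G]$ of ranks $M$ and $N$ with $\at\bt=1$. After the normalizations of Section~\ref{sec:Obs}, write $\at=1+r_1\at_1+\cdots+r_{M-1}\at_{M-1}$ and $\bt=s_01+s_1\bt_1+\cdots+s_{N-1}\bt_{N-1}$ with $\at_1,\ldots,\at_{M-1}$ distinct and nontrivial, $\bt_1,\ldots,\bt_{N-1}$ distinct and nontrivial, and all $r_i,s_j\in K\backslash\{0\}$. Let $\sigma$ be the cancellation partition of $(\at,\bt)$ with respect to these orderings, and pick $\pi\le\sigma$ minimally realizable with $r_0,\ldots,r_{M-1},s_0,\ldots,s_{N-1}$. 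Then $\Gamma_\pi$ admits a group homomorphism $\psi$ to $G$ sending $a_i\mapsto\at_i$ and $b_j\mapsto\bt_j$; distinctness of the images forces $\sigma$, hence (since $\pi\le\sigma$ and $\Gamma_\sigma$ is a quotient of $\Gamma_\pi$) also $\pi$, to be nondegenerate, and $a=r_0a_0+\cdots+r_{M-1}a_{M-1}$, $b=s_0b_0+\cdots+s_{N-1}b_{N-1}$ satisfy $ab=1$ in $K[\Gamma_\pi]$. By the previous paragraph Lemma~\ref{lem:simple} applies and shows $\pi$ has no proper invariant subgrid; being also nondegenerate and minimally realizable over $K$, we get $\Gamma_\pi\in\ULIE^{(2)}_K(M,N)$. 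The standing hypothesis then says the quotient map $\phi\colon\Gamma_\pi\to\Gamma_\pi/N_\tor(\Gamma_\pi)$ identifies two of the $a_i$'s or two of the $b_j$'s. But $G$ is torsion-free, so $N_\tor(\Gamma_\pi)\subseteq\ker\psi$ by Proposition~\ref{prop:Ntor} and $\psi$ factors through $\phi$; since $\psi$ sends $a_0,a_1,\ldots,a_{M-1}$ to the distinct elements $1,\at_1,\ldots,\at_{M-1}$ and $b_0,b_1,\ldots,b_{N-1}$ to the distinct elements $1,\bt_1,\ldots,\bt_{N-1}$, this is a contradiction, and (i) fails of the conclusion that the hypothesis must also fail.

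The only delicate point --- the main obstacle in writing the proof carefully --- is the bookkeeping of the induction: one must make sure that, before invoking Lemma~\ref{lem:simple} for $(M,N)$, one has already established the Invertibles Conjecture over $K$ for the \emph{entire rectangle} of smaller rank pairs $(m,n)$ that Lemma~\ref{lem:simple} requires. This is exactly why the theorem is phrased with a rectangle of $\ULIE^{(2)}_K(m,n)$'s rather than a single rank pair, and why the monotonicity of its hypothesis under shrinking $(M,N)$ is used. Everything else is a transcription of the argument for Theorem~\ref{thm:InvTorULIE}, together with the observation from Section~\ref{sec:K} that nondegeneracy passes from $\sigma$ to any $\pi\le\sigma$.
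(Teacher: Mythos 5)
Your proposal is correct and follows essentially the same route as the paper: induct on $M+N$ (noting that the hypothesis over the rectangle shrinks monotonically, so the Invertibles Conjecture is available for all smaller rank pairs), then run the (ii)$\implies$(i) argument of Theorem~\ref{thm:InvTorULIE} for $(M,N)$, using Lemma~\ref{lem:simple} to conclude the chosen minimally realizable $\pi\le\sigma$ has no proper invariant subgrids and hence $\Gamma_\pi\in\ULIE^{(2)}_K(M,N)$. Your explicit treatment of the base case via Theorem~\ref{thm:InvTorrank2} and of the monotonicity of the hypothesis merely fills in details the paper leaves implicit.
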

\begin{proof}
Arguing first by induction on $M+N$, we may assume the Invertibles Conjecture holds over $K$ for all
rank pairs $(m,n)$ appearing in~\eqref{eq:ULIE2cup} provided $(m,n)\ne(M,N)$.
Now we proceed as in the proof of (ii)$\implies$(i) in Theorem~\ref{thm:InvTorULIE},
but using $(M,N)$ instead of $(m,n)$, except we note
that the equality $ab=1$ in $K[\Gamma_\pi]$ implies, grace of Lemma~\ref{lem:simple},
that $\pi$ has no proper invariant subgrids.
\end{proof}

From the calculations reported in section~\ref{sec:calcs}, we now have the following:
\begin{prop}\label{prop:ICcases}
Let $m$ and $n$ be odd integers with either (a) $\min(m,n)=3$ and $\max(m,n)\le11$ or (b) $m=n=5$.
Then the Invertibles Conjecture holds over the field $\Fb_2$ of two elements for rank pair $(m,n)$.
\end{prop}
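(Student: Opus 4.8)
The plan is to apply the equivalence in Theorem~\ref{thm:InvTorULIE}: to prove the Invertibles Conjecture over $\Fb_2$ for a given rank pair $(m,n)$, it suffices to verify condition~(ii) there, namely that for each group $\Gamma\in\ULIE_{\Fb_2}(m,n)$, with its canonical generators $1=a_0,a_1,\ldots,a_{m-1}$ and $1=b_0,b_1,\ldots,b_{n-1}$, the quotient map $\phi:\Gamma\to\Gamma/N_\tor(\Gamma)$ identifies two of the $a_i$ with each other or two of the $b_j$ with each other. By Remark~\ref{rem:pairpartitions}, over $\Fb_2$ the minimally realizable partitions are exactly those whose blocks are all pairs together with the singleton $\{(0,0)\}$; this forces $m$ and $n$ odd (matching the hypotheses of the proposition), and it is precisely this class of partitions that the computer enumeration of Section~\ref{sec:calcs} produces, for the rank pairs $(3,n)$ with $n\le11$ and for $(5,5)$.

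Given the complete list of groups $\Gamma_\pi$ for these rank pairs, I would go through it group by group and determine $\Gamma_\pi/N_\tor(\Gamma_\pi)$ from the presentation of $\Gamma_\pi$: locate torsion elements among (consequences of) the defining relations, pass to the quotient by the normal closure they generate, iterate the construction in Definition~\ref{def:Ntor} until it stabilizes, and then read off the images of the generators $a_i,b_j$. In the bulk of the cases the resulting quotient $\Gamma_\pi/N_\tor(\Gamma_\pi)$ is abelian — frequently trivial or free abelian — so Remark~\ref{rem:ICabelian} applies directly and shows that $1,\phi(a_1),\ldots,\phi(a_{m-1})$ cannot all be distinct, giving condition~(ii). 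The remaining cases — notably the non-amenable groups occurring for rank pair $(5,5)$, whose torsion-free quotients need not be abelian — are handled individually by computing $\Gamma_\pi/N_\tor(\Gamma_\pi)$ explicitly and exhibiting two canonical generators with equal image, much as in the Example following Remark~\ref{rem:ICabelian} where the quotient map sends $a_1$ to the identity. Once condition~(ii) has been verified for every group on the list, Theorem~\ref{thm:InvTorULIE} yields the proposition.

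A more economical organization uses Theorem~\ref{thm:InvTorULIE2} together with induction on $m+n$: one then only needs to carry out the torsion-quotient verification for the partitions in $\ULIE^{(2)}_{\Fb_2}(m,n)$, i.e.\ those with no proper invariant subgrids (which are automatically row and column connected). This shorter list is the one actually maintained in Section~\ref{sec:calcs}.

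Unlike Proposition~\ref{prop:DFCcases}, no soficity or amenability input is needed here; its role is taken over entirely by the torsion analysis of the quotients. The main obstacle is therefore a combination of completeness and bookkeeping: one must trust that the enumeration of $\ULIE_{\Fb_2}(m,n)$ (equivalently $\ULIE^{(2)}_{\Fb_2}(m,n)$) is exhaustive for all the listed rank pairs, and one must correctly compute $N_\tor(\Gamma_\pi)$ and the induced action on the generators in every single case — a task for which there is no algorithm in general, so that one genuinely relies on the simple explicit structure of the groups appearing on the list.
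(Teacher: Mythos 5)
Your proposal is correct and follows essentially the same route as the paper: invoke the equivalence of Theorem~\ref{thm:InvTorULIE}, run through the computed lists of ULIE$_{\Fb_2}$ groups for the given rank pairs, dispose of the cases with abelian torsion--free quotient via Remark~\ref{rem:ICabelian}, and verify the identification of two canonical generators directly in the remaining cases. The only small discrepancy is in which cases need the hand verification: in the paper these are the $5\times5$ pairings forcing $a=b$ (Subsection~\ref{subsec:5x5a=b}), while the non-amenable $\ULIE^{(-)}_{\Fb_2}(5,5)$ groups in fact also have abelian quotients $\Gamma_\pi/N_\tor(\Gamma_\pi)$, so they are covered by Remark~\ref{rem:ICabelian}.
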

\begin{proof}
All of the ULIE$_{\Fb_2}(m,n)$ groups $\Gamma_\pi$ in the cases~(a)
and all the ULIE$^{(-)}(5,5)$ groups in case~(b) 
have been computed and they are described in section~\ref{sec:calcs}.
In all cases, one easily verifies that the quotient groups $\Gamma_\pi/N_\tor(\Gamma_\pi)$ are abelian.
As described in Remark~\ref{rem:ICabelian}, it follows that the quotient map $\phi:\Gamma\to\Gamma_\pi/N_\tor(\Gamma_\pi)$
fails to be one--to--one on $\{1,a_1,\ldots,a_{m-1}\}$.
For the ULIE$_{\Fb_2}(5,5)$--groups that are not in ULIE$^{(-)}_{\Fb_2}(5,5)$, this lack of injectivity of $\phi$
on $\{1,a_1,\ldots,a_{m-1}\}$ is verified directly in Subsection~\ref{subsec:5x5a=b}.
Now Theorem~\ref{thm:InvTorULIE} applies.
\end{proof}

\section{A procedure for computations of ULIE groups over $\Fb_2$}
\label{sec:alg}

Our aim is, for certain $m$ and $n$, to compute the ULIE groups of all nondegenerate
pairings of the $(2m+1)\times(2n+1)$ grid
\begin{equation}\label{eq:E}
E=(\{0,1,\ldots,2m\}\times\{0,1,\ldots,2n\})\backslash\{(0,0)\}.
\end{equation}
Our strategy is to use a C++ code to enumerate the pairings and for each pairing $\pi$
to call GAP~\cite{GAP} to compute the finitely presented
group $\Gamma_\pi$ as in equation~\eqref{eq:Gammapi} and to determine whether the pairing $\pi$ degenerates.
One difficulty with this strategy is that the Knuth--Bendix procedure employed by GAP to try to decide when
a given word is equivalent to the identity in a finitely presented group
may not terminate in
a reasonable amount of time and is not even guaranteed to ever terminate.
Any groups for which this approach fails to determine degeneracy and/or to decide the $ba=1$ question,
must be handled separately. We handle this by having a timeout routine inside the C++ code that aborts the GAP computation after a couple of seconds and marks this pairing matrix for manual analysis.
We also separate the construction of valid pairings from the degeneracy analysis in GAP for performance reasons: the C++ code can enumerate pairings much more efficiently.

In order to limit the number of costly calls into GAP, we have considered a natural equivalence relation on pairings, and
run GAP on only one pairing from each equivalence class. All relevant pairings are constructed in a
recursive procedure by filling in entries one by one in a pairing matrix. This procedure forms a tree with pairings being the leaves of the tree. Branches that can not produce valid pairing matrices (because they are handled in a different branch due to the equivalence relation, or because they will never produce a valid pairing matrix later) are skipped as soon as it is known.

For permutations $\sigma$ and $\tau$ of $\{0,1,\ldots,2m\}$ and $\{0,1,\ldots,2n\}$, respectively, both of which fix $0$,
let $\pit$ be the image of $\pi$ under the permutation $\sigma\times\tau$,
and let $\at$ and $\bt$ be the elements of $\Gamma_\pit$ that are analogous to $a$ and $b$.
Then $\Gamma_\pi$ is degenerate if and only if $\Gamma_\pit$ is degenerate, and
$ba=1$ in $\Fb_2[\Gamma_\pi]$ if and only if $\bt\at=1$ in $\Fb_2[\Gamma_\pit]$.
Thus, the equivalence relation on the set of pairings that we use is the one induced  by this natural action
of $S_{2m}\times S_{2n}$.

We will encode pairings as $(2m+1)\times(2n+1)$ matrices, as described below.
Keeping with the convention that the elements in the support of $a$ are numbered starting with $a_0$,
and similarly for $b$, we will index the entries of a $(2m+1)\times(2n+1)$ matrix $A$ as $a_{ij}$ with $0\le i\le2m$
and $0\le j\le 2n$.

\begin{defi}
A $(2m+1)\times(2n+1)$ {\em pairing matrix} is a $(2m+1)\times(2n+1)$ matrix $A$ with
\begin{itemize}
\item[$\bullet$] $-1$ in the $(0,0)$ entry
\item[$\bullet$] all other entries of $A$ coming from the set $\{1,2,\ldots,2mn+m+n\}$
\item[$\bullet$] each element of $\{1,2,\ldots,2mn+m+n\}$ appearing in exactly two entries of $A$
\item[$\bullet$] no row or column of $A$ containing a repeated value.
\end{itemize}
\end{defi}

\begin{defi}
A $(2m+1)\times(2n+1)$ {\em partial pairing matrix} is a $(2m+1)\times(2n+1)$ matrix $A$ with
\begin{itemize}
\item[$\bullet$] $-1$ in the $(0,0)$ entry
\item[$\bullet$] all other entries of $A$ coming from the set $\{0,1,2,\ldots,2mn+m+n\}$
\item[$\bullet$] no element of $\{1,2,\ldots,2mn+m+n\}$ appearing in more than two entries of $A$
\item[$\bullet$] no row or column of $A$ containing a repeated nonzero value.
\end{itemize}
\end{defi}

We think about traversing an $(2m+1)\times(2n+1)$ matrix, starting at the $(0,1)$ entry, by proceeding towards the right
until we reach the $(0,2n)$ entry, then taking the next row, starting at the $(1,0)$ entry, moving
from left to right until the $(1,2n)$, and so on, row after row, until we reach the $(2m,2n)$
entry.
In fact, we will eventually construct pairing matrices by filling in the entries in this order.
We say that a partial pairing matrix $A$ is {\em stacked} if, when we traverse the matrix as described above, if we once encounter
a zero, then all the following entries are zero.
This is expressed more precisely below.

\begin{defi}
A partial pairing matrix $A=(a_{ij})_{0\le i\le2m,\,0\le j\le2n}$ is {\em stacked} if $a_{ij}=0$ implies
$a_{i\ell}=0$ for all $\ell>j$ and $a_{k\ell}=0$ for all $k>i$ and all $0\le\ell\le2n$.
\end{defi}

\begin{defi}
We say that a partial pairing matrix $A=(a_{ij})_{0\le i\le2m,\,0\le j\le2n}$ is {\em consecutively numbered} if for the list
\begin{equation}\label{eq:aseq}
\begin{aligned}
a_{0,1},a_{0,2},&\ldots,a_{0,2n},\,
a_{1,0},a_{1,1},\ldots,a_{1,2n}, \\
a_{2,0},&a_{2,1},\ldots,a_{2,2n},\,\ldots, \\
&a_{2m,0},a_{2m,1},\ldots,a_{2m,2n},
\end{aligned}
\end{equation}
when relabeled as $b_1,b_2,\ldots,b_{4mn+2m+2n}$,
the set $\{b_1,\ldots,b_q\}$ of every initial segment (ignoring repeats and rearranging)
is equal to a set of the form $\{0,1,\ldots,r\}$ or $\{1,2,\ldots,r\}$ for some non-negative integer $r$.
\end{defi}

A partial pairing matrix $A=(a_{ij})_{0\le i\le2m,\,0\le j\le2n}$ yields an equivalence relation on the set $E$ as
in~\eqref{eq:E} given by
\[
(i,j)\sim(k,\ell)\quad\Longleftrightarrow\quad a_{ij}=a_{k\ell}\ne0,
\]
and the equivalence classes are all singletons or pairs.
They are all pairs if and only if $A$ is a pairing matrix,
and we will call such an equivalence relation a {\em restricted pairing} of $E$.
Note that the restricted pairings are precisely the partitions of the set $E$
into pairs and singletons so that no entry is paired
with another in the same row or column.

Given a partial pairing matrix $A$, there is a unique partial pairing matrix $B$ that yields the same
equivalence relation as $A$ and such that $B$ is consecutively numbered.
We call $B$ the {\em consecutive renumbering} of $A$. To compute the consecutive renumbering $B$
one has to traverse through all the entries of $A$ row-wise and replace the numbers according to a
map that gets created during the traversal. If an entry for a certain number already exists in the mapping,
it is used, otherwise the smallest unused positive number will be taken.

We will consider the action $\alpha$ of the product $S_{2m}\times S_{2n}$ of symmetric groups
on the set of all $(2m+1)\times(2n+1)$ partial pairing matrices, by permutations of rows numbered $1,2,\ldots,2m$
and columns numbered $1,2,\ldots,2n$.
Restricting this action to the set of all pairing matrices, 
it descends to an action $\beta$ of $S_{2m}\times S_{2n}$ on the set of all restricted pairings of the set $E$ in~\eqref{eq:E}.

\smallskip
We will now describe an algorithm that will generate a set $R_{m,n}$ consisting of
one consecutively numbered pairing matrix for each orbit of $\beta$
(i.e., whose restricted pairing belongs to the given orbit of $\beta$).
We will begin with the matrix $A_0$ which has zero in every entry except for a $-1$ in the $(0,0)$ entry,
and we proceed via a branching process, filling in the entries of the matrix, one after the other, so that we
generate a tree, $T_{m,n}$, of stacked, consecutively numbered $(2m+1)\times(2n+1)$
pairing matrices rooted at $A_0$, and so that all the branches
flowing from a given matrix $A$ are obtained from $A$ be replacing a single zero entry by a nonzero entry.
If the first zero entry of $A$ is in the $(i,j)$ entry, then the branches at this node are determined
by the set $V$ of possible nonzero values to place in the $(i,j)$ position.
Clearly, $V$ will be a subset of the union $H\cup N$, where $H$ is the set of strictly positive integers
that appear in exactly one entry of $A$ (the so-called half--pairs of $A$) and do not appear in the $i$th row
or $j$th column of $A$, and $N$ is either the singleton set $\{\ell+1\}$, where $\ell$ is the largest value 
that appears as an entry of $A$ or, if $\ell=2mn+m+n$, then $N$ is the empty set.

In order to specify $V$, consider the total ordering $<$ on the set of all $(2m+1)\times(2n+1)$ partial pairing matrices, which is defined as the lexicographic ordering on the sequences~\eqref{eq:aseq} associated to partial pairing matrices $A=(a_{ij})_{1\le i\le m,\,1\le j\le n}$.
Then $V=H'\cup N$, where $H'$ is the set of all $h\in H$ such that, if $A'$ is matrix obtained from $A$
by setting the first zero entry (i.e., the $(i,j)$ entry) to be $h$,
then whenever
\begin{equation}\label{eq:sigrho}
(\sigma,\rho)\in S_{2m}\times S_{2n}
\end{equation}
is such that $\alpha((\sigma,\rho),A')$ is a stacked
partial pairing matrix and $B$ is the consecutive renumbering of it, we do not have $B<A'$.

In other words, we branch off at $(i,j)$ with entry $v$ only if there is no permutation
with a smaller consecutive numbering than the current partial pairing matrix. Otherwise
this case is already handled in a different branch of the tree and we would
generate duplicate results.

Using the above algorithm, we will construct a tree, call it $\Tt_{m,n}$, that may have leaves that are not pairing matrices,
i.e., have zeros in them.
For example, in the case $m=n=1$, $\left(\begin{smallmatrix}-1&1&2\\1&2&3\\3&4&0\end{smallmatrix}\right)$
is such a leaf.
We will call such a leaf {\em stunted}.
We obtain the tree $T_{m,n}$ by pruning $\Tt_{m,n}$, lopping off all stunted leaves and all branches that end in only
stunted leaves.
Finally, the set $R_{m,n}$ consists of the matrices found at the leaves of $T_{m,n}$.

Note that the algorithm described here has several good properties:
First, no duplicate pairings will be created. This simplifies the analysis and speeds up the computation as described before.
Second, there is no global state to be kept around. All the operations are done with the current partial pairing matrix. We can determine if a branch has been done already using the ordering and without keeping track what we already touched.
Third, the branching allows us to do parallel computations without any communication between processes. In a pre-process we can run the algorithm where we stop the recursion after the partial pairing matrix has $k$ entries for some $k<(2m+1)(2n+1)$. If we write out these partial pairing matrices, we can start independent jobs for each of those matrices in parallel.

The code that implements this algorithm and the raw output of it are included in the directory
\verb=ULIE.computations= that was submited to (and is retrievable from)
the arXiv with this article as part of the source code.

\newpage

\section{Lists of ULIE groups}
\label{sec:calcs}

Here we list all the ULIE groups of sizes $3\times k$ for $k$ odd, $3\le k\le9$ and the most interesting ones
of sizes $3\times 11$ and $5\times 5$.
The conclusions drawn from these regarding Kaplansky's Direct Finiteness Conjecture and the Invertibles Conjecture
have already been mentioned in Propositions~\ref{prop:DFCcases} and~\ref{prop:ICcases}.

\begin{notation}
\begin{enumerate}[(i)]
\item
$\bZ_n=\bZ/n\bZ$ is the cyclic group of of order $n$.
\item
In presentations of groups, the identity element will be written as $1$.
\item
$\Dih_n\cong\langle x,x\mid x^n=y^2=1,\,yxy=x^{n-1}\rangle$ is the dihedral group of order $2n$.
\item
$H\rtimes_{\alpha}G$ denotes the semidirect product of group $H$ by group $G$ and
the action $\alpha:G\to\operatorname{Aut}(H)$, typically written $G\ni g\mapsto\alpha_g\in\operatorname{Aut}(H)$.
If $G=\bZ$ or $G=\bZ_n$, then by abuse of notation we will write simply $\alpha$ instead of $\alpha_1$.
\end{enumerate}
\end{notation}

The following remark is easy to prove.
\begin{remark}\label{rem:Gammac}
Let $H$ and $G$ be groups
and suppose
\begin{equation}\label{eq:GammaCentExt}
1\to H\to \Gamma\overset{\pi}{\to} G\to1
\end{equation}
is a central extension
and choose any $\phi:G\to\Gamma$ so that $\pi\circ\phi=\id_G$ and $\phi(1)=1$.
Then, the cocycle $\Gamma$, $c:G\times G\to H$ defined by
\begin{equation}\label{eq:cphi}
c(g_1,g_2)=\phi(g_1)\phi(g_2)\phi(g_1g_2)^{-1}
\end{equation}
satisfies the identity
\begin{equation}\label{eq:cidentity}
c(g_1g_2,g_3)c(g_1,g_2)=c(g_1,g_2g_3)c(g_2,g_3).
\end{equation}
Conversely, given any abelian group $H$ and group $G$ and any map $c$ satisfying $c(1,g)=c(g,1)=1$ for all $g\in G$
and the identity~\eqref{eq:cidentity},
there is a uniquely determined
central extension
$\Gamma$ as in~\eqref{eq:GammaCentExt}
with a right inverse $\phi$ for the quotient map $\pi$ so that~\eqref{eq:cphi} holds.
Indeed, we may define $\Gamma$ to be the set $H\times G$ endowed with the multiplication $(h_1,g_1)\cdot(h_2,g_2)=(h_1h_2c(g_1,g_2),g_1g_2)$
and then define $\phi(g)=(1,g)$.
Furthermore, $H$ is equal to the center of $\Gamma$ if and only if for every nontrivial element $g$ of the center of $G$,
there is $g'\in G$ such that $c(g,g')\ne c(g',g)$.
\end{remark}

\begin{remark}
We describe some finitely presented infinite nonabelian groups $\Gamma$ by
describing the centers $H=Z(\Gamma)$ and quotients $G=\Gamma/Z(\Gamma)$
of these groups.
Whenever we make such an assertion, we have verified it by performing the steps indicated in the above remark.
This procedure is carried out in more detail for Group~\ref{gp:3x11.1} below, but in other cases the details are omitted.
\end{remark}

\subsection{Case 3 by 3. All groups are  Abelian.}{$\;$}\newline

There are 3 equivalence classes of nondegenerate pairing matrices, representatives for which are listed below.

\[
\begin{array}{c|c|l}
\text{matrix}&\text{group}&\text{distinguished generators} \\ \hline
\left(\begin{smallmatrix} \times& 1& 2 \\ 1& 3& 4 \\ 2& 4& 3\end{smallmatrix}\right)
&
\langle x,y:y^2=1,\,xy=yx\rangle\cong\bZ\times\bZ_2
&
(a_1,a_2)=(b_1,b_2)=(x,xy) \\[1.5ex] \hline
\left(\begin{smallmatrix} \times& 1& 2 \\ 1& 3& 4 \\ 4& 2& 3\end{smallmatrix}\right)
&
\langle x:x^4=1 \rangle\cong\bZ_4
&
(a_1,a_2)=(x^2,x),\;(b_1,b_2)=(x^2,x^3) \\[1.5ex] \hline
\left(\begin{smallmatrix} \times& 1& 2 \\ 3& 4& 1 \\ 4& 2& 3\end{smallmatrix}\right)
&
\langle x: x^5=1\rangle\cong\bZ_5
&
(a_1,a_2)=(x^3,x^2),\;(b_1,b_2)=(x,x^4).
\end{array}
\]

\subsection{Case 3 by 5. All groups are finite}{$\;$}\newline

There are 9 equivalence classes of nondegenerate pairing matrices, representatives for which are listed below.

\[
\begin{array}{c|c|l}
\text{matrix}&\text{group}&\text{distinguished generators} \\ \hline
\left(\begin{smallmatrix} \times& 1& 2& 3&  4 \\ 1& 2& 5& 6& 7 \\  6& 4& 7& 5& 3 \end{smallmatrix}\right)
&
\langle x:x^8=1 \rangle\cong\bZ_8
&
\begin{array}{l}
(a_1,a_2)=(x,x^6),\\
(b_1,b_2,b_3,b_4)=(x,x^2,x^5,x^7)
\end{array} \\[1ex] \hline
\left(\begin{smallmatrix} \times& 1& 2& 3&  4 \\ 1& 5& 3& 6& 7 \\ 4& 7& 5& 2& 6  \end{smallmatrix}\right)
&
\langle x: x^7=1\rangle\cong\bZ_7
&
\begin{array}{l}
(a_1,a_2)=(x,x^6), \\
(b_1,\ldots,b_4)=(x,x^3,x^4,x^6) \\[1ex] \hline
\end{array} \\[1ex] \hline
\left(\begin{smallmatrix} \times& 1& 2& 3&  4 \\ 1& 5& 3& 6& 7 \\ 6& 2& 4& 7& 5  \end{smallmatrix}\right)
&
\langle x : x^9=1 \rangle\cong\bZ_9
&
\begin{array}{l}
(a_1,a_2)=(x^3,x^7), \\
(b_1,\ldots,b_4)=(x^3,x,x^4,x^8) \\[1ex] \hline
\end{array} \\[1ex] \hline
\left(\begin{smallmatrix} \times& 1& 2& 3&  4 \\ 1& 5& 3& 6& 7 \\ 7& 2& 5& 4& 6  \end{smallmatrix}\right)
&
\langle x : x^7=1 \rangle\cong\bZ_7
&
\begin{array}{l}
(a_1,a_2)=(x,x^4), \\
(b_1,\ldots,b_4)=(x,x^5,x^6,x^3) \\[1ex] \hline
\end{array} \\[1ex] \hline
\left(\begin{smallmatrix} \times& 1& 2& 3&  4 \\ 5& 2& 1& 6& 7 \\ 6& 5& 7& 4& 3  \end{smallmatrix}\right)
&
\begin{array}{c}
\langle x,y: x^4=y^2=1,\,yxy=x^{-1} \rangle \\
\cong\Dih_4 \\
\text{the dihedral group of order }8
\end{array}
&
\begin{array}{l}
(a_1,a_2)=(y,xy), \\
(b_1,\ldots,b_4)=(x,x^3y,x^3,x^2y)
\end{array} \\[1ex] \hline
\left(\begin{smallmatrix} \times& 1& 2& 3&  4 \\ 5& 2& 3& 6& 7 \\ 6& 4& 7& 1& 5  \end{smallmatrix}\right)
&
\langle x: x^{10}=1 \rangle\cong\bZ_{10}
&
\begin{array}{l}
(a_1,a_2)=(x,x^8), \\
(b_1,\ldots,b_4)=(x^5,x^6,x^7,x^3)
\end{array} \\[1ex] \hline
\left(\begin{smallmatrix} \times& 1& 2& 3&  4 \\ 5& 2& 3& 6& 7 \\ 6& 5& 7& 4& 1  \end{smallmatrix}\right)
&
\begin{array}{c}
\langle x,y: x^4=y^4=(xy)^4=1, \\
\quad yxy=x,\,xyx=y \rangle\cong Q_8, \\
\text{the quaternion group}
\end{array}
&
\begin{array}{l}
(a_1,a_2)=(x,x^3y), \\
(b_1,\ldots,b_4)=(y,xy,x^2y,x^3)
\end{array} \\[1ex] \hline
\left(\begin{smallmatrix} \times& 1& 2& 3&  4 \\ 5& 2& 3& 6& 7 \\ 7& 6& 5& 4& 1  \end{smallmatrix}\right)
&
\langle x : x^8=1\rangle\cong\bZ_8
&
\begin{array}{l}
(a_1,a_2)=(x,x^3), \\
(b_1,\ldots,b_4)=(x^5,x^6,x^7,x^2)
\end{array} \\[1ex] \hline
\left(\begin{smallmatrix} \times& 1& 2& 3&  4 \\ 5& 2& 6& 4& 7 \\ 6& 7& 1& 5& 3  \end{smallmatrix}\right)
&
\langle x: x^8=1 \rangle\cong\bZ_8 
&
\begin{array}{l}
(a_1,a_2)=(x,x^7), \\
(b_1,\ldots,b_4)=(x^5,x^6,x^2,x^3)
\end{array} \\[1ex] \hline
\end{array}
\]

\subsection{Case 3 by 7.}{$\;$}\newline

There are 18 equivalence classes of nondegenerate pairing matrices, representatives for which are listed below,
where the distinguished generators are $(a_1,a_2)$ and $(b_1,\ldots,b_6)$.

\subsubsection{Infinite groups.}{$\;$}\newline

\[
\begin{array}{c|c|l}
\text{matrix}&\text{group}&\text{distinguished generators} \\ \hline
\left(\begin{smallmatrix}
\times& 1& 2& 3& 4& 5& 6  \\  
1& 2& 7& 4& 8& 9& 10 \\
3& 5& 8& 6& 7& 10& 9
\end{smallmatrix}\right)
&
\begin{array}{c}
\begin{aligned}
\langle x,y,z:y^2=z^2=1,\,xy=yx,& \\
yz=zy,\,zxz=xy&\rangle
 \end{aligned} \\
\cong(\bZ\times\bZ_2)\rtimes_\alpha\bZ_2 \\
\begin{aligned}
\text{where }&\alpha:(1,0)\mapsto(1,1), \\
&\alpha:(0,1)\mapsto(0,1)
\end{aligned}
\end{array}
&
\begin{array}{l}
(x,zx), \\
(x,x^2,zx,zx^2y,zx^2,x^2y)
\end{array} \\ \hline

\left(\begin{smallmatrix}
\times& 1& 2& 3& 4& 5& 6  \\  
1& 2& 7& 4& 8& 9& 10 \\
3& 5& 10& 6& 9& 8& 7
\end{smallmatrix}\right)
&
\begin{array}{c}
\begin{aligned}
\langle x,y,z:x^2=y^2=1,\,xy=yx,& \\
zxz^{-1}=y,\,zyz^{-1}=xy&\rangle
\end{aligned} \\
\cong(\bZ_2\times\bZ_2)\rtimes_\alpha\bZ  \\
\begin{aligned}
\text{where }&\alpha:(1,0)\mapsto(0,1), \\
&\alpha:(0,1)\mapsto(1,1)
\end{aligned}
\end{array}
&
\begin{array}{l}
(yz,z), \\
(yz,xz^2,z,yz^2,xyz^2,z^2)
\end{array}
\end{array}
\]

\subsubsection{Finite groups.}{$\;$}\newline

\[
\begin{array}{c|c|l}
\text{matrix}&\text{group}&\text{distinguished generators} \\ \hline

\left(\begin{smallmatrix}\times& 1& 2& 3& 4& 5& 6 \\ 1& 2& 7& 4& 8& 9& 10 \\ 5& 9& 3& 6& 10& 8& 7 \end{smallmatrix}\right)
&
\langle x:x^{11}=1\rangle\cong\bZ_{11}
&
\begin{array}{l}
(x^3,x) \\
(x^3,x^6,x^7,x^{10},x,x^8)
\end{array} \\ \hline

\left(\begin{smallmatrix}\times& 1& 2& 3& 4& 5& 6 \\ 1& 2& 7& 4& 8& 9& 10 \\ 5& 10& 8& 6& 9& 7& 3 \end{smallmatrix}\right)
&
\begin{aligned}
\langle x,y,z:x^2=y^2=z^3=1,& \\
xy=yx,\,zxz^{-1}=y,\,zyz^{-1}=xy&\rangle \\
\cong(\bZ_2\times\bZ_2)\rtimes_\alpha\bZ_3\cong&A_4
\end{aligned}
&
\begin{array}{l}
(z,x), \\
(z,z^2,y,xyz,x,xy)
\end{array} \\ \hline

\left(\begin{smallmatrix}\times& 1& 2& 3& 4& 5& 6 \\ 1& 2& 7& 4& 8& 9& 10 \\ 8& 5& 9& 6& 10& 3& 7 \end{smallmatrix}\right)
&
\langle x:x^{14}=1\rangle\cong\bZ_{14}
&
\begin{array}{l}
(x,x^{11}),\\
(x,x^2,x^9,x^{10},x^{12},x^6)
\end{array} \\ \hline

\left(\begin{smallmatrix}\times& 1& 2& 3& 4& 5& 6 \\ 1& 2& 7& 4& 8& 9& 10 \\ 9& 6& 10& 7& 5& 3& 8 \end{smallmatrix}\right)
&
\langle x:x^{11}=1\rangle\cong\bZ_{11}
&
\begin{array}{l}
(x,x^5), \\
(x,x^2,x^9,x^{10},x^4,x^6)
\end{array} \\ \hline

\left(\begin{smallmatrix}\times& 1& 2& 3& 4& 5& 6 \\ 1& 7& 3& 2& 8& 9& 10 \\ 4& 9& 5& 6& 10& 7& 8 \end{smallmatrix}\right)
&
\begin{aligned}
\langle x,y:x^5=y^2=1,\,yxy=x^{-1}&\rangle \\
\cong&\Dih_5
\end{aligned}
&
\begin{array}{l}
(y,x), \\
(y,x^3,x^2y,x,x^4,x^3y)
\end{array} \\ \hline

\left(\begin{smallmatrix}\times& 1& 2& 3& 4& 5& 6 \\ 1& 7& 3& 2& 8& 9& 10 \\ 4& 9& 8& 10& 7& 6& 5 \end{smallmatrix}\right)
&
\langle x,y:x^5=y^2=1,\,yxy=x^{-1}\rangle\cong\Dih_5
&
\begin{array}{l}
(y,xy),\\
(y,x^2y,x^3,xy,x^4y,x^2)
\end{array} \\ \hline

\left(\begin{smallmatrix}\times& 1& 2& 3& 4& 5& 6 \\ 1& 7& 3& 4& 8& 9& 10 \\ 5& 9& 6& 10& 2& 8& 7 \end{smallmatrix}\right)
&
\langle x:x^{13}=1\rangle\cong\bZ_{13}
&
\begin{array}{l}
(x,x^{11}),\\
(x,x^6,x^7,x^8,x^{11},x^4)
\end{array} \\ \hline

\left(\begin{smallmatrix}\times& 1& 2& 3& 4& 5& 6 \\ 1& 7& 3& 4& 8& 9& 10 \\ 9& 2& 6& 10& 5& 7& 8 \end{smallmatrix}\right)
&
\langle x:x^{11}=1\rangle\cong\bZ_{11}
&
\begin{array}{l}
(x^2,x^3), \\
(x^2,x^5,x^7,x^9,x,x^8)
\end{array} \\ \hline 

\left(\begin{smallmatrix}\times& 1& 2& 3& 4& 5& 6 \\ 1& 7& 3& 8& 5& 9& 10 \\ 6& 10& 7& 2& 8& 4& 9 \end{smallmatrix}\right)
&
\langle x:x^{10}=1\rangle\cong\bZ_{10}
&
\begin{array}{l}
(x,x^9), \\
(x,x^3,x^4,x^6,x^7,x^9)
\end{array} \\ \hline

\left(\begin{smallmatrix}\times& 1& 2& 3& 4& 5& 6 \\ 1& 7& 3& 8& 5& 9& 10 \\ 8& 2& 9& 4& 6& 10& 7 \end{smallmatrix}\right)
&
\langle x:x^{12}=1\rangle\cong\bZ_{12}
&
\begin{array}{l}
(x^4,x^9), \\
(x^4,x,x^5,x^2,x^6,x^{11})
\end{array} \\ \hline

\left(\begin{smallmatrix}\times& 1& 2& 3& 4& 5& 6 \\ 1& 7& 3& 8& 5& 9& 10 \\ 10& 2& 7& 4& 8& 6& 9 \end{smallmatrix}\right)
&
\langle x:x^{10}=1\rangle\cong\bZ_{10}
&
\begin{array}{l}
(x^2,x), \\
(x^2,x^3,x^5,x^6,x^8,x^9)
\end{array} \\ \hline

\end{array}
\]

\[
\begin{array}{c|c|l}
\hline

\left(\begin{smallmatrix}\times& 1& 2& 3& 4& 5& 6 \\ 7& 2& 3& 4& 8& 9& 10 \\ 8& 5& 9& 6& 10& 7& 1 \end{smallmatrix}\right)
&
\langle x:x^{16}=1\rangle\cong\bZ_{16}
&
\begin{array}{l}
(x^7,x), \\
(x^5,x^{12},x^3,x^{10},x^6,x^4)
\end{array} \\ \hline

\left(\begin{smallmatrix}\times& 1& 2& 3& 4& 5& 6 \\ 7& 2& 3& 4& 8& 9& 10 \\ 9& 6& 10& 7& 1& 8& 5 \end{smallmatrix}\right)
&
\langle x:x^{13}=1\rangle\cong\bZ_{13}
&
\begin{array}{l}
(x^{10},x^9), \\
(x^7,x^4,x,x^{11},x^{12},x^3)
\end{array} \\ \hline 

\left(\begin{smallmatrix}\times& 1& 2& 3& 4& 5& 6 \\ 7& 2& 3& 8& 5& 9& 10 \\ 9& 6& 10& 4& 7& 1& 8 \end{smallmatrix}\right)
&
\langle x:x^{12}=1\rangle\cong\bZ_{12}
&
\begin{array}{l}
(x^{10},x^9), \\
(x^8,x^6,x^4,x,x^{11},x^5)
\end{array} \\ \hline

\left(\begin{smallmatrix}\times& 1& 2& 3& 4& 5& 6 \\ 7& 2& 3& 8& 5& 9& 10 \\ 10& 9& 6& 4& 1& 7& 8 \end{smallmatrix}\right)
&
\begin{array}{c}
\langle x,y:x^2=y^2,\,x^4=1,\,(xy)^4=y^2\rangle \\
\cong Q_{16}, \\
\text{the generalized quaternion group}
\end{array}
&
\begin{array}{l}
(x,y), \\
(yxyx,yxy,xyxy, \\
\qquad\qquad xyx,yx^3,xy^3)
\end{array} \\ \hline 

\left(\begin{smallmatrix}\times& 1& 2& 3& 4& 5& 6 \\ 7& 2& 8& 4& 9& 6& 10 \\ 8& 9& 1& 10& 3& 7& 5 \end{smallmatrix}\right)
&
\langle x:x^{11}=1\rangle\cong\bZ_{11}
&
\begin{array}{l}
(x,x^{10}), \\
(x^8,x^9,x^5,x^6,x^2,x^3)
\end{array} \\ \hline
\end{array}
\]

\subsection{Case 3 by 9.}{$\;$}\newline

There are 24 equivalence classes of nondegenerate pairing matrices, representatives for which are listed below,
where the distinguished generators are $(a_1,a_2)$ and $(b_1,\ldots,b_8)$.

\subsubsection{Infinite groups.}{$\;$}\newline

\[
\begin{array}{c|c|l}
\text{matrix}&\text{group}&\text{distinguished generators} \\ \hline
\left(\begin{smallmatrix}
\times& 1& 2& 3& 4& 5& 6& 7& 8 \\
1& 2& 3& 9& 5& 10& 11& 12& 13  \\ 
6& 11& 7& 12& 8& 13& 4& 10& 9
\end{smallmatrix}\right)
&
\begin{array}{c}
\langle x,y:y^4=1,\,xy=yx \rangle \\
\cong\bZ\times\bZ_4
\end{array}
&
\begin{array}{l}
(x,xy), \\
(x,x^2,x^3,x^2y^2, \\
\;x^3y^2,xy,x^3y,x^3y^3)
\end{array} \\ \hline

\left(\begin{smallmatrix}
\times& 1& 2& 3& 4& 5& 6& 7& 8 \\  
1& 9& 3& 4& 10& 6& 11& 12& 13 \\
2& 5& 9& 7& 13& 8& 12& 10& 11
\end{smallmatrix}\right)
&
\begin{array}{c}
\Gamma=\langle x,y:x^2=y^2,\\
\qquad\qquad(xy)^2=(yx)^2\rangle \\
\text{has center }Z(\Gamma)\cong\bZ\times\bZ_2 \\
\text{generated by }\{x^2,(xy)^2x^{-4}\} \\
\text{with }\Gamma/Z(\Gamma)\cong\bZ_2\times\bZ_2
\end{array}
&
\begin{array}{l}
(x,y), \\
(x,y,xy,y^3, \\
\quad yx,xyx,yxy,x^3)
\end{array} \\ \hline
\end{array}
\]

\subsubsection{Finite  groups.}

\[
\begin{array}{c|c|l}
\text{matrix}&\text{group}&\text{generators} \\ \hline

\left(\begin{smallmatrix}
\times& 1& 2& 3& 4& 5& 6& 7& 8 \\
1& 2& 3& 9& 5& 10& 11& 12& 13  \\ 
6& 11& 7& 12& 8& 13& 10& 4& 9
\end{smallmatrix}\right)
&
\begin{array}{c}
\langle x,y:x^8=y^2=1,\,xy=yx \rangle \\
\cong\bZ_8\times\bZ_2
\end{array}
&
\begin{array}{l}
(x^2y,x) \\
(x^2y,x^4,x^6y,x^6, \\
\;y,x,x^5,x^7)
\end{array} \\ \hline

\left(\begin{smallmatrix}
\times& 1& 2& 3& 4& 5& 6& 7& 8 \\
1& 2& 3& 9& 5& 10& 11& 12& 13  \\ 
11& 6& 12& 7& 13& 8& 4& 10& 9
\end{smallmatrix}\right)
&
\begin{array}{c}
\langle x,y:x^8=y^2=1,\,yxy=x^5 \rangle,\\
\text{the modular group of order }16
\end{array}
&
\begin{array}{l}
(x^6y,x^3y) \\
(x^6y,x^4,x^2y,y, \\
\;x^6,x,x^5,xy)
\end{array} \\ \hline

\left(\begin{smallmatrix}
\times& 1& 2& 3& 4& 5& 6& 7& 8 \\
1& 2& 3& 9& 5& 10& 11& 12& 13  \\ 
11& 6& 12& 7& 13& 8& 10& 4& 9
\end{smallmatrix}\right)
&
\begin{array}{c}
\langle x,y:x^4=y^4=1,\,yxy^{-1}=x^{-1} \rangle,\\
\cong\bZ_4\rtimes_{\alpha}\bZ_4, \\
\text{where }\alpha(1)=3
\end{array}
&
\begin{array}{l}
(x,xy),\\
(x,x^2,x^3,x^3y^2, \\
\;y^2,y,x^2y,xy^3)
\end{array} \\ \hline

\end{array}
\]

\[
\begin{array}{c|c|l}
\hline

\left(\begin{smallmatrix}
\times& 1& 2& 3& 4& 5& 6& 7& 8 \\
1& 2& 3& 9& 5& 10& 11& 12& 13  \\ 
11& 7& 12& 6& 8& 13& 4& 10& 9
\end{smallmatrix}\right)
&
\langle x:x^{16}=1 \rangle\cong\bZ_{16}
&
\begin{array}{l}
(x^4,x) \\
(x^4,x^8,x^{12},x^{14}, \\
\;x^2,x^{13},x^5,x^{15})
\end{array} \\ \hline

\left(\begin{smallmatrix}
\times& 1& 2& 3& 4& 5& 6& 7& 8 \\
1& 2& 3& 9& 5& 10& 11& 12& 13  \\ 
11& 7& 12& 6& 8& 13& 10& 4& 9
\end{smallmatrix}\right)
&
\langle x:x^{16}=1 \rangle\cong\bZ_{16}
&
\begin{array}{l}
(x^{12},x) \\
(x^{12},x^8,x^4,x^{14}, \\
\;x^{10},x^5,x^{13},x^{15})
\end{array} \\ \hline

\left(\begin{smallmatrix}
\times& 1& 2& 3& 4& 5& 6& 7& 8 \\
1& 2& 9& 4& 5& 10& 11& 12& 13  \\ 
11& 7& 12& 8& 13& 3& 10& 6& 9
\end{smallmatrix}\right)
&
\langle x:x^{15}=1 \rangle\cong\bZ_{15}
&
\begin{array}{l}
(x,x^{13}) \\
(x,x^2,x^7,x^8, \\
\;x^9,x^{12},x^{14},x^5)
\end{array} \\ \hline

\left(\begin{smallmatrix}
\times& 1& 2& 3& 4& 5& 6& 7& 8 \\
1& 2& 9& 4& 5& 10& 11& 12& 13  \\ 
11& 7& 12& 9& 8& 13& 10& 3& 6
\end{smallmatrix}\right)
&
\langle x:x^{15}=1 \rangle\cong\bZ_{15}
&
\begin{array}{l}
(x^{12},x^{13}) \\
(x^{12},x^9,x^8,x^5, \\
\;x^2,x,x^{10},x^3)
\end{array} \\ \hline

\left(\begin{smallmatrix}
\times& 1& 2& 3& 4& 5& 6& 7& 8 \\
1& 2& 9& 4& 10& 6& 11& 12& 13  \\ 
10& 7& 12& 9& 5& 8& 13& 11& 3
\end{smallmatrix}\right)
&
\langle x:x^{17}=1 \rangle\cong\bZ_{17}
&
\begin{array}{l}
(x^{12},x^{13}) \\
(x^{12},x^7,x^6,x, \\
\;x^{14},x^9,x^8,x^{10})
\end{array} \\ \hline

\left(\begin{smallmatrix}
\times& 1& 2& 3& 4& 5& 6& 7& 8 \\
1& 9& 3& 4& 5& 10& 11& 12& 13  \\ 
6& 11& 7& 12& 8& 13& 10& 9& 2
\end{smallmatrix}\right)
&
\begin{array}{c}
\langle x,y:x^{10}=y^2=1,\,xy=yx \rangle \\
\cong\bZ_{10}\times\bZ_2
\end{array}
&
\begin{array}{l}
(x,x^9y) \\
(x,x^4,x^5.x^6,x^7, \\
\;x^9y,x^3y,x^5y)
\end{array} \\ \hline

\left(\begin{smallmatrix}
\times& 1& 2& 3& 4& 5& 6& 7& 8 \\
1& 9& 3& 4& 5& 10& 11& 12& 13  \\ 
10& 11& 7& 13& 9& 12& 8& 2& 6
\end{smallmatrix}\right)
&
\langle x,y:x^8=y^2=1,\,yxy=x^3 \rangle
&
\begin{array}{l}
(x,x^2y) \\
(x,x^6y,x^7y,y, \\
\;xy,x^4y,x^4,x^6)
\end{array} \\ \hline

\left(\begin{smallmatrix}
\times& 1& 2& 3& 4& 5& 6& 7& 8 \\
1& 9& 3& 4& 10& 6& 11& 12& 13  \\ 
7& 13& 11& 8& 5& 2& 12& 9& 10
\end{smallmatrix}\right)
&
\begin{array}{c}
\langle x,y:x^5=y^4=1,\,yxy^{-1}=x^{-1} \rangle \\
\cong\bZ_5\rtimes_\alpha\bZ_4\\
\text{where }\alpha(1)=4
\end{array}
&
\begin{array}{l}
(xy^3,y) \\
(xy^3,x^3y,x^3,x^3y^3, \\
\;x^2,x^4y^3,y,x^2y)
\end{array} \\ \hline

\left(\begin{smallmatrix}
\times& 1& 2& 3& 4& 5& 6& 7& 8 \\
1& 9& 3& 4& 10& 6& 11& 12& 13  \\ 
10& 5& 9& 7& 12& 8& 13& 11& 2
\end{smallmatrix}\right)
&
\langle x:x^{18}=1 \rangle\cong\bZ_{18}
&
\begin{array}{l}
(x^2,x^5) \\
(x^2,x^{17},x,x^3, \\
\;x^7,x^9,x^6,x^{12})
\end{array} \\ \hline

\left(\begin{smallmatrix}
\times& 1& 2& 3& 4& 5& 6& 7& 8 \\
1& 9& 3& 4& 10& 6& 11& 12& 13  \\ 
11& 2& 7& 12& 5& 8& 13& 9& 10
\end{smallmatrix}\right)
&
\langle x:x^{16}=1 \rangle\cong\bZ_{16}
&
\begin{array}{l}
(x^3,x) \\
(x^3,x^4,x^7,x^{10}, \\
\;x^{11},x^{14},x^5,x^{12})
\end{array} \\ \hline

\left(\begin{smallmatrix}
\times& 1& 2& 3& 4& 5& 6& 7& 8 \\
1& 9& 3& 10& 5& 11& 7& 12& 13  \\ 
8& 13& 9& 2& 10& 4& 11& 6& 12
\end{smallmatrix}\right)
&
\langle x:x^{13}=1 \rangle\cong\bZ_{13}
&
\begin{array}{l}
(x^9,x^4) \\
(x^9,x,x^{10},x^2, \\
\;x^{11},x^3,x^{12},x^4)
\end{array} \\ \hline

\left(\begin{smallmatrix}
\times& 1& 2& 3& 4& 5& 6& 7& 8 \\
1& 9& 3& 10& 5& 11& 7& 12& 13  \\ 
10& 2& 11& 4& 12& 6& 8& 13& 9
\end{smallmatrix}\right)
&
\langle x:x^{15}=1 \rangle\cong\bZ_{15}
&
\begin{array}{l}
(x^5,x^{11}) \\
(x^5,x,x^6,x^2, \\
\;x^7,x^3,x^8,x^{14})
\end{array} \\ \hline

\left(\begin{smallmatrix}
\times& 1& 2& 3& 4& 5& 6& 7& 8 \\
1& 9& 3& 10& 5& 11& 7& 12& 13  \\ 
13& 2& 9& 4& 10& 6& 11& 8& 12
\end{smallmatrix}\right)
&
\langle x:x^{13}=1 \rangle\cong\bZ_{13}
&
\begin{array}{l}
(x^2,x) \\
(x^2,x^3,x^5,x^6, \\
\;x^8,x^9,x^{11},x^{12})
\end{array} \\ \hline

\left(\begin{smallmatrix}
\times& 1& 2& 3& 4& 5& 6& 7& 8 \\
9& 2& 1& 4& 3& 10& 11& 12& 13  \\ 
10& 9& 11& 12& 13& 7& 8& 5& 6
\end{smallmatrix}\right)
&
\begin{array}{c}
\langle x,y:x^7=y^2=1,\,yxy=x^{-1} \rangle \\
\cong\Dih_7
\end{array}
&
\begin{array}{l}
(y,xy) \\
(x,x^6y,x^3y,x^4, \\
\;x^6,x^5y,x^2y,x^3)
\end{array} \\ \hline
\end{array}
\]

\[
\begin{array}{c|c|l}
\hline

\left(\begin{smallmatrix}
\times& 1& 2& 3& 4& 5& 6& 7& 8 \\
9& 2& 3& 4& 5& 10& 11& 12& 13  \\ 
11& 7& 12& 8& 13& 9& 1& 10& 6
\end{smallmatrix}\right)
&
\langle x:x^{19}=1 \rangle\cong\bZ_{19}
&
\begin{array}{l}
(x,x^{12}) \\
(x^4,x^5,x^6,x^7, \\
\;x^8,x^{11},x^{16},x^{18})
\end{array} \\ \hline

\left(\begin{smallmatrix}
\times& 1& 2& 3& 4& 5& 6& 7& 8 \\
9& 2& 3& 4& 10& 6& 11& 12& 13  \\ 
10& 7& 12& 8& 13& 9& 1& 5& 11
\end{smallmatrix}\right)
&
\langle x:x^{19}=1 \rangle\cong\bZ_{19}
&
\begin{array}{l}
(x^4,x^5) \\
(x^8,x^{12},x^{16},x, \\
\;x^{18},x^3,x^{13},x^2)
\end{array} \\ \hline

\left(\begin{smallmatrix}
\times& 1& 2& 3& 4& 5& 6& 7& 8 \\
9& 2& 3& 10& 5& 6& 11& 12& 13  \\ 
10& 7& 12& 1& 8& 13& 4& 11& 9
\end{smallmatrix}\right)
&
\langle x:x^{17}=1 \rangle\cong\bZ_{17}
&
\begin{array}{l}
(x^{11},x^{12}) \\
(x^{13},x^7,x,x^4, \\
\;x^{15},x^9,x^8,x^{16})
\end{array} \\ \hline

\left(\begin{smallmatrix}
\times& 1& 2& 3& 4& 5& 6& 7& 8 \\
9& 2& 3& 10& 5& 6& 11& 12& 13  \\ 
12& 8& 13& 7& 10& 9& 1& 4& 11
\end{smallmatrix}\right)
&
\langle x:x^{15}=1 \rangle\cong\bZ_{15}
&
\begin{array}{l}
(x^3,x) \\
(x^6,x^9,x^{12},x^{14}, \\
\;x^2,x^5,x^{13},x^7)
\end{array} \\ \hline

\left(\begin{smallmatrix}
\times& 1& 2& 3& 4& 5& 6& 7& 8 \\
9& 2& 10& 4& 11& 6& 12& 8& 13  \\ 
10& 11& 1& 12& 3& 13& 5& 9& 7
\end{smallmatrix}\right)
&
\langle x:x^{14}=1 \rangle\cong\bZ_{14}
&
\begin{array}{l}
(x,x^{13}) \\
(x^{11},x^{12},x^8,x^9, \\
\;x^5,x^6,x^2,x^3)
\end{array} \\ \hline
\end{array}
\]

\subsection{Case 3 by 11.}{$\;$}\newline

There are 29 equivalence classes of nondegenerate pairing matrices, representatives for which are listed below.

\subsubsection{Infinite groups.}{$\;$}\newline

\begin{gp}\label{gp:3x11.1}
The pairing matrix
\[
\left(\begin{smallmatrix} \times & 1 & 2 & 3 & 4 & 5 & 6 & 7 & 8 & 9 & 10 \\ 1 & 2 & 3 & 11 & 5 & 12 & 7 & 13 & 14 & 15 & 16 \\ 4 & 6 & 12 & 13 & 8 & 9 & 14 & 15 & 10 & 16 & 11 \end{smallmatrix}\right)
\]
Gives rise to the group
\[
\Gamma=\langle a,b\mid a^2b=ba^2,\,b^2a=ab^2,\,(ba)^2=(ab)^2\rangle.
\]
(We have reappropriated the symbols $a$ and $b$ to be group elements, rather than elements of the group ring.)
This group has central elements $\rt:=a^2$, $\st:=a^2b^{-2}$, $\ttilde:=ab^{-1}ab^{-1}$ satisfying $\st^2=\ttilde^2=1$.
Modulo the subgroup generated by these elements, we have that the images $x$ and $y$ of $a$ and $b$ in the quotient of $\Gamma$ by the subgroup
generated by $\{\rt,\st,\ttilde\}$ commute and satisfy $x^2=y^2=1$.
Taking $G=\langle x,y\mid x^2=y^2=1,\,xy=yx\rangle\cong\Ints_2\times\Ints_2$, and the map $\phi:G\to\Gamma$ given
by $\phi(1)=1$, $\phi(x)=a$, $\phi(y)=b$ and $\phi(xy)=ab$,
letting $H=\Ints\times\Ints_2\times\Ints_2$ have generators $r=(1,0,0)$, $s=(0,1,0)$ and $t=(0,0,1)$,
and using $\phi$ for guidance, we let $c:G\times G\to H$ be defined by $c(1,g)=1=c(g,1)$ and as shown in the following table,
where $g_1$ is the left column, $g_2$ the top row and $c(g_1,g_2)$ the corresponding entry:

\vskip2ex
\noindent
\hskip3em
\begin{tabular}{r||c|c|c}
& $x$ & $y$ & $xy$ \\
\hline\hline
$x$ & $r$ & $1$ & $r$ \\ \hline
$y$ & $st$ & $rs$ & $rt$ \\ \hline
$xy$ & $rst$ & $rs$ & $r^2t$
\end{tabular}
\vskip2ex

\noindent
(Indeed, 
we express $\phi(g_1)\phi(g_2)\phi(g_1g_2)^{-1}$ in terms of $\rt$, $\st$ and $\ttilde$ and define $c(g_1,g_2)$ to be the corresponding element of $H$.)
One easily verifies that the identity~\eqref{eq:cidentity} holds.
Let $\Gamma_c$ be the group that is an extension of $H$ by $G$ defined using $c$, as described in Remark~\ref{rem:Gammac}.
From this, one defines a map $\Gamma_c\to\Gamma$
by
\[
(r^is^jt^k,g)\mapsto\rt^i\st^j\ttilde^k\phi(g),\qquad(i\in\Ints,\,j,k\in\{0,1\},\,g\in G)
\]
and the choices made ensure that this is a group homomorphism.
On the other hand, the elements $(1,x)$ and $(1,y)$ in $\Gamma_c$ satisfy the defining relations (in $a$ and $b$)
of $\Gamma$, so there is a group homormorphism
$\Gamma\to\Gamma_c$ given by $a\mapsto (1,x)$ and $b\mapsto (1,y)$.
These morphisms are inverses of each other, so $\Gamma\cong\Gamma_c$.

Finally, the criterion mentioned in Remark~\ref{rem:Gammac} for $H$ to be the center of $\Gamma_c$ is fulfilled.
We (partially) summarize this situation by stating that $\Gamma$ has center $Z(\Gamma)$
isomorphic to $\Ints\times\Ints_2\times\Ints_2$ and generated by $\{a^2,a^2b^{-2},ab^{-1}ab^{-1}\}$,
and with quotient $\Gamma/Z(\Gamma)\cong\Ints_2\times\Ints_2$.
In particular, the group is amenable, infinite and nonabelian.
\end{gp}

\begin{gp}
The pairing matrix
\[
\left(\begin{smallmatrix} \times & 1 & 2 & 3 & 4 & 5 & 6 & 7 & 8 & 9 & 10 \\ 1 & 2 & 11 & 4 & 5 & 12 & 7 & 13 & 14 & 15 & 16 \\ 3 & 8 & 9 & 6 & 11 & 16 & 14 & 12 & 10 & 13 & 15 \end{smallmatrix}\right)
\]
gives rise to the group
\[
\Gamma=\langle a,b\mid aba=b^3,\,bab=a^3,\,b^2a^{-2}=ab^{-1}a^{-1}b\rangle.
\]
(We have reappropriated the symbols $a$ and $b$ to be group elements, rather than elements of the group ring.)
We have the resulting relations $b^4=abab=baba=a^4$, so $r:=a^4=b^4$ lies in the center of $\Gamma$.
Also, $s:=ba^{-1}ba^{-1}$ lies in the center of $\Gamma$ and $s^2=1$.
We find that $Z(\Gamma)\cong\Ints\times\Ints_2$ and 
\[
\Gamma/Z(\Gamma)\cong\langle x,y\mid x^4=y^4=1,\,(xy)^2=(xy^{-1})^2=1\rangle
\]
is a nonabelian group of order $16$.
Thus, $\Gamma$ is amenable, infinite and nonabelian.
\end{gp}

\subsubsection{Finite  groups.}{$\;$}\newline

All the other  nondegenerate pairing matrices give
rise to finite ULIE groups.
A list of representative of the equivalence clases of these is below:

\begin{alignat*}{2}
\left(\begin{smallmatrix} \times & 1 & 2 & 3 & 4 & 5 & 6 & 7 & 8 & 9 & 10 \\ 1 & 2 & 3 & 4 & 11 & 6 & 12 & 13 & 14 & 15 & 16 \\ 13 & 8 & 14 & 9 & 15 & 10 & 16 & 5 & 12 & 7 & 11 \end{smallmatrix}\right)
&\quad
\left(\begin{smallmatrix} \times & 1 & 2 & 3 & 4 & 5 & 6 & 7 & 8 & 9 & 10 \\ 1 & 2 & 3 & 4 & 11 & 6 & 12 & 13 & 14 & 15 & 16 \\ 13 & 8 & 14 & 9 & 15 & 10 & 16 & 11 & 5 & 12 & 7 \end{smallmatrix}\right)
&\quad
\left(\begin{smallmatrix} \times & 1 & 2 & 3 & 4 & 5 & 6 & 7 & 8 & 9 & 10 \\ 1 & 2 & 3 & 11 & 5 & 12 & 7 & 13 & 14 & 15 & 16 \\ 12 & 13 & 4 & 6 & 8 & 9 & 14 & 15 & 10 & 16 & 11 \end{smallmatrix}\right)
\\[2ex]
\left(\begin{smallmatrix} \times & 1 & 2 & 3 & 4 & 5 & 6 & 7 & 8 & 9 & 10 \\ 1 & 2 & 11 & 4 & 5 & 6 & 12 & 13 & 14 & 15 & 16 \\ 13 & 8 & 14 & 9 & 15 & 10 & 16 & 11 & 3 & 12 & 7 \end{smallmatrix}\right)
&\quad
\left(\begin{smallmatrix} \times & 1 & 2 & 3 & 4 & 5 & 6 & 7 & 8 & 9 & 10 \\ 1 & 2 & 11 & 4 & 5 & 12 & 7 & 13 & 14 & 15 & 16 \\ 8 & 14 & 3 & 9 & 15 & 6 & 10 & 16 & 13 & 11 & 12 \end{smallmatrix}\right)
&\quad
\left(\begin{smallmatrix} \times & 1 & 2 & 3 & 4 & 5 & 6 & 7 & 8 & 9 & 10 \\ 1 & 2 & 11 & 4 & 5 & 12 & 7 & 13 & 14 & 15 & 16 \\ 8 & 14 & 6 & 11 & 9 & 15 & 10 & 16 & 12 & 13 & 3 \end{smallmatrix}\right)
\\[2ex]
\left(\begin{smallmatrix} \times & 1 & 2 & 3 & 4 & 5 & 6 & 7 & 8 & 9 & 10 \\ 1 & 2 & 11 & 4 & 5 & 12 & 7 & 13 & 14 & 15 & 16 \\ 12 & 8 & 14 & 9 & 15 & 6 & 10 & 16 & 13 & 11 & 3 \end{smallmatrix}\right)
&\quad
\left(\begin{smallmatrix} \times & 1 & 2 & 3 & 4 & 5 & 6 & 7 & 8 & 9 & 10 \\ 1 & 2 & 11 & 4 & 5 & 12 & 7 & 13 & 14 & 15 & 16 \\ 14 & 9 & 15 & 11 & 10 & 16 & 12 & 8 & 6 & 13 & 3 \end{smallmatrix}\right)
&\quad
\left(\begin{smallmatrix} \times & 1 & 2 & 3 & 4 & 5 & 6 & 7 & 8 & 9 & 10 \\ 1 & 11 & 3 & 2 & 5 & 4 & 12 & 13 & 14 & 15 & 16 \\ 6 & 13 & 7 & 8 & 9 & 10 & 15 & 11 & 16 & 12 & 14 \end{smallmatrix}\right)
\displaybreak[2] \\[2ex]
\left(\begin{smallmatrix} \times & 1 & 2 & 3 & 4 & 5 & 6 & 7 & 8 & 9 & 10 \\ 1 & 11 & 3 & 2 & 5 & 4 & 12 & 13 & 14 & 15 & 16 \\ 6 & 13 & 12 & 14 & 15 & 16 & 11 & 9 & 10 & 7 & 8 \end{smallmatrix}\right)
&\quad
\left(\begin{smallmatrix} \times & 1 & 2 & 3 & 4 & 5 & 6 & 7 & 8 & 9 & 10 \\ 1 & 11 & 3 & 4 & 5 & 12 & 7 & 13 & 14 & 15 & 16 \\ 8 & 14 & 9 & 15 & 10 & 16 & 11 & 2 & 12 & 6 & 13 \end{smallmatrix}\right)
&\quad
\left(\begin{smallmatrix} \times & 1 & 2 & 3 & 4 & 5 & 6 & 7 & 8 & 9 & 10 \\ 1 & 11 & 3 & 4 & 5 & 12 & 7 & 13 & 14 & 15 & 16 \\ 12 & 2 & 8 & 14 & 9 & 15 & 10 & 16 & 6 & 13 & 11 \end{smallmatrix}\right)
\\[2ex]
\left(\begin{smallmatrix} \times & 1 & 2 & 3 & 4 & 5 & 6 & 7 & 8 & 9 & 10 \\ 1 & 11 & 3 & 4 & 12 & 6 & 7 & 13 & 14 & 15 & 16 \\ 8 & 14 & 9 & 15 & 2 & 10 & 16 & 5 & 12 & 13 & 11 \end{smallmatrix}\right)
&\quad
\left(\begin{smallmatrix} \times & 1 & 2 & 3 & 4 & 5 & 6 & 7 & 8 & 9 & 10 \\ 1 & 11 & 3 & 4 & 12 & 6 & 7 & 13 & 14 & 15 & 16 \\ 14 & 2 & 9 & 15 & 5 & 10 & 16 & 8 & 11 & 12 & 13 \end{smallmatrix}\right)
&\quad
\left(\begin{smallmatrix} \times & 1 & 2 & 3 & 4 & 5 & 6 & 7 & 8 & 9 & 10 \\ 1 & 11 & 3 & 12 & 5 & 13 & 7 & 14 & 9 & 15 & 16 \\ 10 & 16 & 11 & 2 & 12 & 4 & 13 & 6 & 14 & 8 & 15 \end{smallmatrix}\right)
\displaybreak[2] \\[2ex]
\left(\begin{smallmatrix} \times & 1 & 2 & 3 & 4 & 5 & 6 & 7 & 8 & 9 & 10 \\ 1 & 11 & 3 & 12 & 5 & 13 & 7 & 14 & 9 & 15 & 16 \\ 12 & 2 & 13 & 4 & 14 & 6 & 15 & 8 & 10 & 16 & 11 \end{smallmatrix}\right)
&\quad
\left(\begin{smallmatrix} \times & 1 & 2 & 3 & 4 & 5 & 6 & 7 & 8 & 9 & 10 \\ 1 & 11 & 3 & 12 & 5 & 13 & 7 & 14 & 9 & 15 & 16 \\ 16 & 2 & 11 & 4 & 12 & 6 & 13 & 8 & 14 & 10 & 15 \end{smallmatrix}\right)
&\quad
\left(\begin{smallmatrix} \times & 1 & 2 & 3 & 4 & 5 & 6 & 7 & 8 & 9 & 10 \\ 11 & 2 & 3 & 4 & 5 & 12 & 7 & 13 & 14 & 15 & 16 \\ 12 & 8 & 14 & 9 & 15 & 1 & 10 & 16 & 6 & 13 & 11 \end{smallmatrix}\right)
\\[2ex]
\left(\begin{smallmatrix} \times & 1 & 2 & 3 & 4 & 5 & 6 & 7 & 8 & 9 & 10 \\ 11 & 2 & 3 & 4 & 5 & 12 & 7 & 13 & 14 & 15 & 16 \\ 13 & 8 & 14 & 9 & 15 & 6 & 10 & 16 & 12 & 11 & 1 \end{smallmatrix}\right)
&\quad
\left(\begin{smallmatrix} \times & 1 & 2 & 3 & 4 & 5 & 6 & 7 & 8 & 9 & 10 \\ 11 & 2 & 3 & 4 & 5 & 12 & 7 & 13 & 14 & 15 & 16 \\ 14 & 9 & 15 & 10 & 16 & 8 & 12 & 11 & 1 & 6 & 13 \end{smallmatrix}\right)
&\quad
\left(\begin{smallmatrix} \times & 1 & 2 & 3 & 4 & 5 & 6 & 7 & 8 & 9 & 10 \\ 11 & 2 & 3 & 4 & 12 & 6 & 7 & 13 & 14 & 15 & 16 \\ 12 & 14 & 8 & 15 & 9 & 11 & 16 & 10 & 13 & 5 & 1 \end{smallmatrix}\right)
\displaybreak[2] \\[2ex]
\left(\begin{smallmatrix} \times & 1 & 2 & 3 & 4 & 5 & 6 & 7 & 8 & 9 & 10 \\ 11 & 2 & 3 & 4 & 12 & 6 & 7 & 13 & 14 & 15 & 16 \\ 13 & 8 & 14 & 9 & 15 & 10 & 16 & 1 & 11 & 5 & 12 \end{smallmatrix}\right)
&\quad
\left(\begin{smallmatrix} \times & 1 & 2 & 3 & 4 & 5 & 6 & 7 & 8 & 9 & 10 \\ 11 & 2 & 3 & 4 & 12 & 6 & 7 & 13 & 14 & 15 & 16 \\ 14 & 13 & 11 & 9 & 15 & 10 & 16 & 8 & 5 & 1 & 12 \end{smallmatrix}\right)
&\quad
\left(\begin{smallmatrix} \times & 1 & 2 & 3 & 4 & 5 & 6 & 7 & 8 & 9 & 10 \\ 11 & 2 & 3 & 4 & 12 & 6 & 13 & 8 & 14 & 15 & 16 \\ 13 & 12 & 9 & 15 & 7 & 10 & 16 & 11 & 5 & 14 & 1 \end{smallmatrix}\right)
\\[2ex]
\left(\begin{smallmatrix} \times & 1 & 2 & 3 & 4 & 5 & 6 & 7 & 8 & 9 & 10 \\ 11 & 2 & 3 & 12 & 5 & 6 & 13 & 8 & 14 & 15 & 16 \\ 12 & 14 & 15 & 7 & 11 & 10 & 9 & 1 & 16 & 4 & 13 \end{smallmatrix}\right)
&\quad
\left(\begin{smallmatrix} \times & 1 & 2 & 3 & 4 & 5 & 6 & 7 & 8 & 9 & 10 \\ 11 & 2 & 3 & 12 & 5 & 6 & 13 & 8 & 14 & 15 & 16 \\ 14 & 9 & 15 & 4 & 10 & 16 & 7 & 11 & 1 & 12 & 13 \end{smallmatrix}\right)
&\quad
\left(\begin{smallmatrix} \times & 1 & 2 & 3 & 4 & 5 & 6 & 7 & 8 & 9 & 10 \\ 11 & 2 & 12 & 4 & 13 & 6 & 14 & 8 & 15 & 10 & 16 \\ 12 & 13 & 1 & 14 & 3 & 15 & 5 & 16 & 7 & 11 & 9 \end{smallmatrix}\right)
\end{alignat*}


\subsection{Case 5 by 5, $a=b$.}{$\;$}\newline
\label{subsec:5x5a=b}

There are $100$ inequivalent pairing matrices of the form
\begin{equation}\label{eq:mat1234}
\left(
\begin{matrix}
\times & 1 & 2 & 3 & 4 \\
1 & * & * & * & * \\
2 & * & * & * & * \\
3 & * & * & * & * \\
4 & * & * & * & *
\end{matrix}
\right)
\end{equation}
that were not found by GAP to be degenerate.
GAP decided that $79$ of them are nondegenerate, while for $21$ of them, GAP did not 
decide (in reasonable time) equality of words in the group
and so didn't decide on degeneracy.
In the corresponding group algebras, the resulting relations imply
$a:=1+a_1+\cdots+a_4=1+b_1+\cdots+b_4=:b$, which trivially gives $ba=1$.
Although these cases are not interesting for the direct finiteness conjecture, they are of interest for the invertibles
conjecture, so we list all the pairing matrices and analyze the groups, without describing them in detail.
Thus, throughout we have $a_i=b_i$ for all $i$, and the group is generated by $a_1,a_2,a_3,a_4$.

\begin{matcases}
Any pairing $\pi$ with identifications
\begin{equation}\label{eq:ijkl}
(i,k)\sim(j,\ell)\text{ and }(i,\ell)\sim(j,k),
\end{equation}
where $i\ne j$ and $k\ne\ell$, yields the relations
\[
a_{\ell}a_k^{-1}=a_j^{-1}a_i=a_ka_\ell^{-1}
\]
which implies that $a_\ell a_k^{-1}$ has order $2$.
Thus, $a_k$ and $a_\ell$ are identified in $\Gamma/N_\tor(\Gamma)$.
There were $18$ inequivalent pairing matrices whose pairings included identifications of the form~\eqref{eq:ijkl}, but which our GAP
algorithm failed to decide about degeneracy.
These are:
\begin{alignat*}{4}
\left(\begin{smallmatrix} \times& 1& 2& 3& 4 \\[0.5ex] 1& 5& 6& 7& 8 \\[0.5ex] 2& 6& 5& 8& 7 \\[0.5ex] 3& 9& 10& 11& 12 \\[0.5ex] 4& 10& 9& 12& 11 \end{smallmatrix}\right)
&\quad
\left(\begin{smallmatrix} \times& 1& 2& 3& 4 \\[0.5ex] 1& 5& 6& 7& 8 \\[0.5ex] 2& 6& 5& 9& 10 \\[0.5ex] 3& 7& 9& 11& 12 \\[0.5ex] 4& 10& 8& 12& 11 \end{smallmatrix}\right)
&\quad
\left(\begin{smallmatrix} \times& 1& 2& 3& 4 \\[0.5ex] 1& 5& 6& 7& 8 \\[0.5ex] 2& 6& 5& 9& 10 \\[0.5ex] 3& 8& 10& 11& 12 \\[0.5ex] 4& 9& 7& 12& 11 \end{smallmatrix}\right)
&\quad
\left(\begin{smallmatrix} \times& 1& 2& 3& 4 \\[0.5ex] 1& 5& 6& 7& 8 \\[0.5ex] 2& 6& 9& 8& 7 \\[0.5ex] 3& 10& 11& 5& 12 \\[0.5ex] 4& 11& 10& 12& 9 \end{smallmatrix}\right)
&\quad
\left(\begin{smallmatrix} \times& 1& 2& 3& 4 \\[0.5ex] 1& 5& 6& 7& 8 \\[0.5ex] 2& 6& 9& 8& 7 \\[0.5ex] 3& 10& 11& 12& 5 \\[0.5ex] 4& 11& 10& 9& 12 \end{smallmatrix}\right) \displaybreak[2]  \\[2ex]
\left(\begin{smallmatrix} \times& 1& 2& 3& 4 \\[0.5ex] 1& 5& 6& 7& 8 \\[0.5ex] 2& 6& 9& 10& 11 \\[0.5ex] 3& 8& 7& 12& 5 \\[0.5ex] 4& 11& 10& 9& 12 \end{smallmatrix}\right)
&\quad
\left(\begin{smallmatrix} \times& 1& 2& 3& 4 \\[0.5ex] 1& 5& 6& 7& 8 \\[0.5ex] 2& 7& 8& 5& 9 \\[0.5ex] 3& 6& 10& 11& 12 \\[0.5ex] 4& 11& 12& 9& 10 \end{smallmatrix}\right)
&\quad
\left(\begin{smallmatrix} \times& 1& 2& 3& 4 \\[0.5ex] 1& 5& 6& 7& 8 \\[0.5ex] 2& 7& 8& 5& 9 \\[0.5ex] 3& 9& 10& 11& 12 \\[0.5ex] 4& 11& 12& 6& 10 \end{smallmatrix}\right)
&\quad
\left(\begin{smallmatrix} \times& 1& 2& 3& 4 \\[0.5ex] 1& 5& 6& 7& 8 \\[0.5ex] 2& 7& 8& 9& 6 \\[0.5ex] 3& 10& 5& 11& 12 \\[0.5ex] 4& 11& 12& 10& 9 \end{smallmatrix}\right)
&\quad
\left(\begin{smallmatrix} \times& 1& 2& 3& 4 \\[0.5ex] 1& 5& 6& 7& 8 \\[0.5ex] 2& 7& 8& 9& 10 \\[0.5ex] 3& 6& 5& 11& 12 \\[0.5ex] 4& 11& 12& 10& 9 \end{smallmatrix}\right) \displaybreak[2] \\[2ex]
\left(\begin{smallmatrix} \times& 1& 2& 3& 4 \\[0.5ex] 1& 5& 6& 7& 8 \\[0.5ex] 2& 9& 5& 8& 7 \\[0.5ex] 3& 10& 11& 12& 6 \\[0.5ex] 4& 11& 10& 9& 12 \end{smallmatrix}\right)
&\quad
\left(\begin{smallmatrix} \times& 1& 2& 3& 4 \\[0.5ex] 1& 5& 6& 7& 8 \\[0.5ex] 2& 9& 5& 8& 10 \\[0.5ex] 3& 10& 11& 12& 9 \\[0.5ex] 4& 11& 7& 6& 12 \end{smallmatrix}\right)
&\quad
\left(\begin{smallmatrix} \times& 1& 2& 3& 4 \\[0.5ex] 1& 5& 6& 7& 8 \\[0.5ex] 2& 9& 5& 8& 10 \\[0.5ex] 3& 10& 11& 12& 9 \\[0.5ex] 4& 12& 7& 6& 11 \end{smallmatrix}\right)
&\quad
\left(\begin{smallmatrix} \times& 1& 2& 3& 4 \\[0.5ex] 1& 5& 6& 7& 8 \\[0.5ex] 2& 9& 5& 10& 11 \\[0.5ex] 3& 11& 8& 12& 6 \\[0.5ex] 4& 10& 7& 9& 12 \end{smallmatrix}\right)
&\quad
\left(\begin{smallmatrix} \times& 1& 2& 3& 4 \\[0.5ex] 1& 5& 6& 7& 8 \\[0.5ex] 2& 9& 5& 10& 11 \\[0.5ex] 3& 11& 8& 12& 6 \\[0.5ex] 4& 10& 12& 9& 7 \end{smallmatrix}\right) \\[2ex]
\left(\begin{smallmatrix} \times& 1& 2& 3& 4 \\[0.5ex] 1& 5& 6& 7& 8 \\[0.5ex] 2& 9& 5& 10& 11 \\[0.5ex] 3& 11& 12& 8& 9 \\[0.5ex] 4& 12& 7& 6& 10 \end{smallmatrix}\right)
&\quad
\left(\begin{smallmatrix} \times& 1& 2& 3& 4 \\[0.5ex] 1& 5& 6& 7& 8 \\[0.5ex] 2& 9& 7& 10& 5 \\[0.5ex] 3& 11& 8& 12& 6 \\[0.5ex] 4& 10& 12& 9& 11 \end{smallmatrix}\right)
&\quad
\left(\begin{smallmatrix} \times& 1& 2& 3& 4 \\[0.5ex] 1& 5& 6& 7& 8 \\[0.5ex] 2& 9& 7& 10& 11 \\[0.5ex] 3& 12& 8& 11& 6 \\[0.5ex] 4& 10& 5& 9& 12 \end{smallmatrix}\right)
\end{alignat*}
There were $19$ inequivalent pairing matrices whose pairings included identifications of the form~\eqref{eq:ijkl} and which our GAP
algorithm showed are nondegenerate.
These are:
\begin{alignat*}{4}
\left(\begin{smallmatrix} \times& 1& 2& 3& 4 \\[0.5ex] 1& 5& 6& 7& 8 \\[0.5ex] 2& 6& 5& 8& 9 \\[0.5ex] 3& 7& 10& 11& 12 \\[0.5ex] 4& 10& 9& 12& 11 \end{smallmatrix}\right) 
&\quad
\left(\begin{smallmatrix} \times& 1& 2& 3& 4 \\[0.5ex] 1& 5& 6& 7& 8 \\[0.5ex] 2& 6& 5& 8& 9 \\[0.5ex] 3& 9& 10& 11& 12 \\[0.5ex] 4& 10& 7& 12& 11 \end{smallmatrix}\right)
&\quad
\left(\begin{smallmatrix} \times& 1& 2& 3& 4 \\[0.5ex] 1& 5& 6& 7& 8 \\[0.5ex] 2& 6& 5& 8& 9 \\[0.5ex] 3& 10& 7& 11& 12 \\[0.5ex] 4& 9& 10& 12& 11 \end{smallmatrix}\right) 
&\quad
\left(\begin{smallmatrix} \times& 1& 2& 3& 4 \\[0.5ex] 1& 5& 6& 7& 8 \\[0.5ex] 2& 6& 5& 9& 10 \\[0.5ex] 3& 7& 8& 11& 12 \\[0.5ex] 4& 9& 10& 12& 11 \end{smallmatrix}\right)
&\quad
\left(\begin{smallmatrix} \times& 1& 2& 3& 4 \\[0.5ex] 1& 5& 6& 7& 8 \\[0.5ex] 2& 6& 5& 9& 10 \\[0.5ex] 3& 7& 9& 11& 12 \\[0.5ex] 4& 8& 10& 12& 11 \end{smallmatrix}\right)  \displaybreak[2] \\[2ex]
\left(\begin{smallmatrix} \times& 1& 2& 3& 4 \\[0.5ex] 1& 5& 6& 7& 8 \\[0.5ex] 2& 6& 5& 9& 10 \\[0.5ex] 3& 7& 9& 11& 12 \\[0.5ex] 4& 8& 12& 10& 11 \end{smallmatrix}\right)
&\quad
\left(\begin{smallmatrix} \times& 1& 2& 3& 4 \\[0.5ex] 1& 5& 6& 7& 8 \\[0.5ex] 2& 6& 5& 9& 10 \\[0.5ex] 3& 8& 7& 11& 12 \\[0.5ex] 4& 10& 9& 12& 11 \end{smallmatrix}\right) 
&\quad
\left(\begin{smallmatrix} \times& 1& 2& 3& 4 \\[0.5ex] 1& 5& 6& 7& 8 \\[0.5ex] 2& 6& 5& 9& 10 \\[0.5ex] 3& 8& 10& 11& 12 \\[0.5ex] 4& 7& 9& 12& 11 \end{smallmatrix}\right)
&\quad
\left(\begin{smallmatrix} \times& 1& 2& 3& 4 \\[0.5ex] 1& 5& 6& 7& 8 \\[0.5ex] 2& 6& 5& 9& 10 \\[0.5ex] 3& 10& 8& 11& 12 \\[0.5ex] 4& 9& 7& 12& 11 \end{smallmatrix}\right)
&\quad
\left(\begin{smallmatrix} \times& 1& 2& 3& 4 \\[0.5ex] 1& 5& 6& 7& 8 \\[0.5ex] 2& 6& 7& 9& 10 \\[0.5ex] 3& 11& 9& 5& 12 \\[0.5ex] 4& 12& 10& 8& 11 \end{smallmatrix}\right)  \displaybreak[2] \\[2ex]
\left(\begin{smallmatrix} \times& 1& 2& 3& 4 \\[0.5ex] 1& 5& 6& 7& 8 \\[0.5ex] 2& 6& 9& 8& 10 \\[0.5ex] 3& 11& 10& 12& 9 \\[0.5ex] 4& 7& 11& 5& 12 \end{smallmatrix}\right)
&\quad
\left(\begin{smallmatrix} \times& 1& 2& 3& 4 \\[0.5ex] 1& 5& 6& 7& 8 \\[0.5ex] 2& 6& 9& 10& 11 \\[0.5ex] 3& 8& 11& 12& 5 \\[0.5ex] 4& 7& 10& 9& 12 \end{smallmatrix}\right)
&\quad
\left(\begin{smallmatrix} \times& 1& 2& 3& 4 \\[0.5ex] 1& 5& 6& 7& 8 \\[0.5ex] 2& 7& 8& 5& 6 \\[0.5ex] 3& 9& 10& 11& 12 \\[0.5ex] 4& 11& 12& 9& 10 \end{smallmatrix}\right) 
&\quad
\left(\begin{smallmatrix} \times& 1& 2& 3& 4 \\[0.5ex] 1& 5& 6& 7& 8 \\[0.5ex] 2& 7& 8& 5& 9 \\[0.5ex] 3& 10& 11& 6& 12 \\[0.5ex] 4& 9& 12& 10& 11 \end{smallmatrix}\right)
&\quad
\left(\begin{smallmatrix} \times& 1& 2& 3& 4 \\[0.5ex] 1& 5& 6& 7& 8 \\[0.5ex] 2& 7& 8& 5& 9 \\[0.5ex] 3& 10& 11& 9& 12 \\[0.5ex] 4& 6& 12& 10& 11 \end{smallmatrix}\right)  \displaybreak[2] \\[2ex]
\left(\begin{smallmatrix} \times& 1& 2& 3& 4 \\[0.5ex] 1& 5& 6& 7& 8 \\[0.5ex] 2& 7& 8& 9& 10 \\[0.5ex] 3& 11& 12& 10& 9 \\[0.5ex] 4& 6& 5& 11& 12 \end{smallmatrix}\right)
&\quad
\left(\begin{smallmatrix} \times& 1& 2& 3& 4 \\[0.5ex] 1& 5& 6& 7& 8 \\[0.5ex] 2& 7& 9& 5& 10 \\[0.5ex] 3& 8& 11& 10& 12 \\[0.5ex] 4& 6& 12& 9& 11 \end{smallmatrix}\right)
&\quad
\left(\begin{smallmatrix} \times& 1& 2& 3& 4 \\[0.5ex] 1& 5& 6& 7& 8 \\[0.5ex] 2& 7& 9& 5& 10 \\[0.5ex] 3& 9& 11& 10& 12 \\[0.5ex] 4& 6& 12& 8& 11 \end{smallmatrix}\right) 
&\quad
\left(\begin{smallmatrix} \times& 1& 2& 3& 4 \\[0.5ex] 1& 5& 6& 7& 8 \\[0.5ex] 2& 7& 9& 5& 10 \\[0.5ex] 3& 10& 11& 8& 12 \\[0.5ex] 4& 9& 12& 6& 11 \end{smallmatrix}\right)
\end{alignat*}
\end{matcases}

\begin{matcases}
For the pairings $\pi$ corresponding to the following pairing matrices,
GAP determined that they are nondegenerate and that the corresponding ULIE groups $\Gamma_\pi$
are abelian.
Thus, (see Remark~\ref{rem:ICabelian}), the images of the support elements of $a$ in $\Gamma_\pi/N_\tor(\Gamma_\pi)$
are not distinct.
\begin{alignat*}{4}
\left(\begin{smallmatrix} \times& 1& 2& 3& 4 \\[0.5ex] 1& 5& 6& 7& 8 \\[0.5ex] 2& 6& 7& 8& 5 \\[0.5ex] 3& 9& 10& 11& 12 \\[0.5ex] 4& 10& 11& 12& 9  \end{smallmatrix}\right)
&\quad
\left(\begin{smallmatrix} \times& 1& 2& 3& 4 \\[0.5ex] 1& 5& 6& 7& 8 \\[0.5ex] 2& 6& 7& 8& 9 \\[0.5ex] 3& 10& 11& 5& 12 \\[0.5ex] 4& 11& 9& 12& 10  \end{smallmatrix}\right)
&\quad
\left(\begin{smallmatrix} \times& 1& 2& 3& 4 \\[0.5ex] 1& 5& 6& 7& 8 \\[0.5ex] 2& 6& 7& 8& 9 \\[0.5ex] 3& 10& 11& 9& 12 \\[0.5ex] 4& 11& 5& 12& 10  \end{smallmatrix}\right)
&\quad
\left(\begin{smallmatrix} \times& 1& 2& 3& 4 \\[0.5ex] 1& 5& 6& 7& 8 \\[0.5ex] 2& 6& 7& 9& 5 \\[0.5ex] 3& 10& 8& 11& 12 \\[0.5ex] 4& 9& 11& 12& 10  \end{smallmatrix}\right)
&\quad
\left(\begin{smallmatrix} \times& 1& 2& 3& 4 \\[0.5ex] 1& 5& 6& 7& 8 \\[0.5ex] 2& 6& 7& 9& 5 \\[0.5ex] 3& 10& 9& 11& 12 \\[0.5ex] 4& 8& 11& 12& 10  \end{smallmatrix}\right)  \displaybreak[2] \\[2ex]
\left(\begin{smallmatrix} \times& 1& 2& 3& 4 \\[0.5ex] 1& 5& 6& 7& 8 \\[0.5ex] 2& 6& 7& 9& 10 \\[0.5ex] 3& 11& 8& 5& 12 \\[0.5ex] 4& 9& 10& 12& 11  \end{smallmatrix}\right)
&\quad
\left(\begin{smallmatrix} \times& 1& 2& 3& 4 \\[0.5ex] 1& 5& 6& 7& 8 \\[0.5ex] 2& 6& 7& 9& 10 \\[0.5ex] 3& 11& 8& 10& 12 \\[0.5ex] 4& 9& 5& 12& 11  \end{smallmatrix}\right)
&\quad
\left(\begin{smallmatrix} \times& 1& 2& 3& 4 \\[0.5ex] 1& 5& 6& 7& 8 \\[0.5ex] 2& 6& 7& 9& 10 \\[0.5ex] 3& 11& 9& 5& 12 \\[0.5ex] 4& 8& 10& 12& 11  \end{smallmatrix}\right)
&\quad
\left(\begin{smallmatrix} \times& 1& 2& 3& 4 \\[0.5ex] 1& 5& 6& 7& 8 \\[0.5ex] 2& 6& 7& 9& 10 \\[0.5ex] 3& 11& 9& 10& 12 \\[0.5ex] 4& 8& 5& 12& 11  \end{smallmatrix}\right)
&\quad
\left(\begin{smallmatrix} \times& 1& 2& 3& 4 \\[0.5ex] 1& 5& 6& 7& 8 \\[0.5ex] 2& 6& 9& 8& 10 \\[0.5ex] 3& 7& 11& 9& 12 \\[0.5ex] 4& 11& 10& 12& 5  \end{smallmatrix}\right)  \displaybreak[2] \\[2ex]
\left(\begin{smallmatrix} \times& 1& 2& 3& 4 \\[0.5ex] 1& 5& 6& 7& 8 \\[0.5ex] 2& 6& 9& 8& 10 \\[0.5ex] 3& 7& 11& 10& 12 \\[0.5ex] 4& 11& 5& 12& 9  \end{smallmatrix}\right)
&\quad
\left(\begin{smallmatrix} \times& 1& 2& 3& 4 \\[0.5ex] 1& 5& 6& 7& 8 \\[0.5ex] 2& 6& 9& 8& 10 \\[0.5ex] 3& 7& 11& 12& 9 \\[0.5ex] 4& 11& 10& 5& 12  \end{smallmatrix}\right)
&\quad
\left(\begin{smallmatrix} \times& 1& 2& 3& 4 \\[0.5ex] 1& 5& 6& 7& 8 \\[0.5ex] 2& 6& 9& 8& 10 \\[0.5ex] 3& 9& 11& 10& 12 \\[0.5ex] 4& 11& 5& 12& 7  \end{smallmatrix}\right)
&\quad
\left(\begin{smallmatrix} \times& 1& 2& 3& 4 \\[0.5ex] 1& 5& 6& 7& 8 \\[0.5ex] 2& 6& 9& 8& 10 \\[0.5ex] 3& 10& 11& 12& 5 \\[0.5ex] 4& 11& 7& 9& 12  \end{smallmatrix}\right)
&\quad
\left(\begin{smallmatrix} \times& 1& 2& 3& 4 \\[0.5ex] 1& 5& 6& 7& 8 \\[0.5ex] 2& 6& 9& 8& 10 \\[0.5ex] 3& 10& 11& 12& 9 \\[0.5ex] 4& 11& 7& 5& 12  \end{smallmatrix}\right)  \displaybreak[2] \\[2ex]
\left(\begin{smallmatrix} \times& 1& 2& 3& 4 \\[0.5ex] 1& 5& 6& 7& 8 \\[0.5ex] 2& 6& 9& 10& 11 \\[0.5ex] 3& 9& 8& 11& 12 \\[0.5ex] 4& 10& 5& 12& 7  \end{smallmatrix}\right)
&\quad
\left(\begin{smallmatrix} \times& 1& 2& 3& 4 \\[0.5ex] 1& 5& 6& 7& 8 \\[0.5ex] 2& 9& 7& 8& 5 \\[0.5ex] 3& 10& 11& 12& 6 \\[0.5ex] 4& 11& 12& 9& 10  \end{smallmatrix}\right)
&\quad
\left(\begin{smallmatrix} \times& 1& 2& 3& 4 \\[0.5ex] 1& 5& 6& 7& 8 \\[0.5ex] 2& 9& 7& 8& 5 \\[0.5ex] 3& 10& 11& 12& 9 \\[0.5ex] 4& 11& 12& 6& 10  \end{smallmatrix}\right)
&\quad
\left(\begin{smallmatrix} \times& 1& 2& 3& 4 \\[0.5ex] 1& 5& 6& 7& 8 \\[0.5ex] 2& 9& 7& 8& 10 \\[0.5ex] 3& 11& 12& 10& 6 \\[0.5ex] 4& 12& 5& 9& 11  \end{smallmatrix}\right)
&\quad
\left(\begin{smallmatrix} \times& 1& 2& 3& 4 \\[0.5ex] 1& 5& 6& 7& 8 \\[0.5ex] 2& 9& 7& 8& 10 \\[0.5ex] 3& 11& 12& 10& 9 \\[0.5ex] 4& 12& 5& 6& 11  \end{smallmatrix}\right) \\[2ex]
\left(\begin{smallmatrix} \times& 1& 2& 3& 4 \\[0.5ex] 1& 5& 6& 7& 8 \\[0.5ex] 2& 9& 7& 10& 5 \\[0.5ex] 3& 11& 8& 12& 9 \\[0.5ex] 4& 10& 12& 6& 11  \end{smallmatrix}\right)
\end{alignat*}
\end{matcases}

\begin{matcases}
For the pairings $\pi$ of the followig pairing matrices,
GAP determined that they are nondegenerate and that in the corresponding ULIE groups $\Gamma_\pi$,
the element $a_1a_2^{-1}$ has order $2$:
\begin{alignat*}{4}
\left(\begin{smallmatrix} \times& 1& 2& 3& 4 \\[0.5ex] 1& 5& 6& 7& 8 \\[0.5ex] 2& 6& 9& 8& 10 \\[0.5ex] 3& 11& 7& 5& 12 \\[0.5ex] 4& 10& 11& 12& 9  \end{smallmatrix}\right)
&\quad
\left(\begin{smallmatrix} \times& 1& 2& 3& 4 \\[0.5ex] 1& 5& 6& 7& 8 \\[0.5ex] 2& 6& 9& 8& 10 \\[0.5ex] 3& 11& 7& 9& 12 \\[0.5ex] 4& 10& 11& 12& 5  \end{smallmatrix}\right)
&\quad
\left(\begin{smallmatrix} \times& 1& 2& 3& 4 \\[0.5ex] 1& 5& 6& 7& 8 \\[0.5ex] 2& 6& 9& 10& 11 \\[0.5ex] 3& 8& 7& 5& 12 \\[0.5ex] 4& 11& 10& 12& 9  \end{smallmatrix}\right)
&\quad
\left(\begin{smallmatrix} \times& 1& 2& 3& 4 \\[0.5ex] 1& 5& 6& 7& 8 \\[0.5ex] 2& 6& 9& 10& 11 \\[0.5ex] 3& 8& 7& 9& 12 \\[0.5ex] 4& 11& 10& 12& 5  \end{smallmatrix}\right)
&\quad
\left(\begin{smallmatrix} \times& 1& 2& 3& 4 \\[0.5ex] 1& 5& 6& 7& 8 \\[0.5ex] 2& 6& 9& 10& 11 \\[0.5ex] 3& 8& 11& 5& 12 \\[0.5ex] 4& 7& 10& 12& 9  \end{smallmatrix}\right)   \displaybreak[2] \\[2ex]
\left(\begin{smallmatrix} \times& 1& 2& 3& 4 \\[0.5ex] 1& 5& 6& 7& 8 \\[0.5ex] 2& 6& 9& 10& 11 \\[0.5ex] 3& 11& 8& 5& 12 \\[0.5ex] 4& 10& 7& 12& 9  \end{smallmatrix}\right)
&\quad
\left(\begin{smallmatrix} \times& 1& 2& 3& 4 \\[0.5ex] 1& 5& 6& 7& 8 \\[0.5ex] 2& 6& 9& 10& 11 \\[0.5ex] 3& 11& 8& 12& 5 \\[0.5ex] 4& 10& 7& 9& 12  \end{smallmatrix}\right)
&\quad
\left(\begin{smallmatrix} \times& 1& 2& 3& 4 \\[0.5ex] 1& 5& 6& 7& 8 \\[0.5ex] 2& 7& 8& 9& 10 \\[0.5ex] 3& 11& 12& 6& 5 \\[0.5ex] 4& 10& 9& 11& 12  \end{smallmatrix}\right)
&\quad
\left(\begin{smallmatrix} \times& 1& 2& 3& 4 \\[0.5ex] 1& 5& 6& 7& 8 \\[0.5ex] 2& 7& 8& 9& 10 \\[0.5ex] 3& 11& 12& 6& 9 \\[0.5ex] 4& 10& 5& 11& 12  \end{smallmatrix}\right)
&\quad
\left(\begin{smallmatrix} \times& 1& 2& 3& 4 \\[0.5ex] 1& 5& 6& 7& 8 \\[0.5ex] 2& 7& 9& 10& 6 \\[0.5ex] 3& 8& 11& 12& 10 \\[0.5ex] 4& 12& 5& 9& 11  \end{smallmatrix}\right)
\end{alignat*}
\end{matcases}

\begin{matcases}
For the pairings $\pi$ of the followig pairing matrices,
GAP determined that they are nondegenerate and that in the corresponding ULIE groups $\Gamma_\pi$,
the element $a_1a_2^{-1}$ has order $3$:
\[
\left(\begin{smallmatrix} \times& 1& 2& 3& 4 \\[0.5ex] 1& 5& 6& 7& 8 \\[0.5ex] 2& 7& 9& 10& 6 \\[0.5ex] 3& 9& 11& 8& 12 \\[0.5ex] 4& 12& 5& 11& 10 \end{smallmatrix}\right)
\quad
\left(\begin{smallmatrix} \times& 1& 2& 3& 4 \\[0.5ex] 1& 5& 6& 7& 8 \\[0.5ex] 2& 7& 9& 10& 6 \\[0.5ex] 3& 11& 5& 12& 10 \\[0.5ex] 4& 9& 12& 8& 11 \end{smallmatrix}\right)
\quad
\left(\begin{smallmatrix} \times& 1& 2& 3& 4 \\[0.5ex] 1& 5& 6& 7& 8 \\[0.5ex] 2& 7& 9& 10& 11 \\[0.5ex] 3& 12& 11& 6& 5 \\[0.5ex] 4& 9& 8& 12& 10 \end{smallmatrix}\right)
\]
\end{matcases}

\begin{matcases}
For the pairings $\pi$ of the followig pairing matrices,
GAP determined that they are nondegenerate and that in the corresponding ULIE groups $\Gamma_\pi$,
the element $a_1a_2^{-1}$ has order $4$:
\begin{alignat*}{4}
\left(\begin{smallmatrix} \times& 1& 2& 3& 4 \\[0.5ex] 1& 5& 6& 7& 8 \\[0.5ex] 2& 6& 9& 8& 10 \\[0.5ex] 3& 10& 11& 5& 12 \\[0.5ex] 4& 11& 7& 12& 9 \end{smallmatrix}\right)
&\quad
\left(\begin{smallmatrix} \times& 1& 2& 3& 4 \\[0.5ex] 1& 5& 6& 7& 8 \\[0.5ex] 2& 6& 9& 8& 10 \\[0.5ex] 3& 10& 11& 9& 12 \\[0.5ex] 4& 11& 7& 12& 5 \end{smallmatrix}\right)
&\quad
\left(\begin{smallmatrix} \times& 1& 2& 3& 4 \\[0.5ex] 1& 5& 6& 7& 8 \\[0.5ex] 2& 6& 9& 8& 10 \\[0.5ex] 3& 11& 7& 12& 9 \\[0.5ex] 4& 10& 11& 5& 12 \end{smallmatrix}\right)
&\quad
\left(\begin{smallmatrix} \times& 1& 2& 3& 4 \\[0.5ex] 1& 5& 6& 7& 8 \\[0.5ex] 2& 6& 9& 8& 10 \\[0.5ex] 3& 11& 10& 12& 5 \\[0.5ex] 4& 7& 11& 9& 12 \end{smallmatrix}\right)
&\quad
\left(\begin{smallmatrix} \times& 1& 2& 3& 4 \\[0.5ex] 1& 5& 6& 7& 8 \\[0.5ex] 2& 6& 9& 10& 11 \\[0.5ex] 3& 8& 7& 12& 9 \\[0.5ex] 4& 11& 10& 5& 12 \end{smallmatrix}\right)   \displaybreak[2] \\[2ex]
\left(\begin{smallmatrix} \times& 1& 2& 3& 4 \\[0.5ex] 1& 5& 6& 7& 8 \\[0.5ex] 2& 7& 5& 8& 6 \\[0.5ex] 3& 9& 10& 11& 12 \\[0.5ex] 4& 10& 12& 9& 11 \end{smallmatrix}\right)
&\quad
\left(\begin{smallmatrix} \times& 1& 2& 3& 4 \\[0.5ex] 1& 5& 6& 7& 8 \\[0.5ex] 2& 7& 5& 8& 9 \\[0.5ex] 3& 6& 10& 11& 12 \\[0.5ex] 4& 10& 12& 9& 11 \end{smallmatrix}\right)
&\quad
\left(\begin{smallmatrix} \times& 1& 2& 3& 4 \\[0.5ex] 1& 5& 6& 7& 8 \\[0.5ex] 2& 7& 5& 8& 9 \\[0.5ex] 3& 9& 10& 11& 12 \\[0.5ex] 4& 10& 12& 6& 11 \end{smallmatrix}\right)
&\quad
\left(\begin{smallmatrix} \times& 1& 2& 3& 4 \\[0.5ex] 1& 5& 6& 7& 8 \\[0.5ex] 2& 7& 5& 9& 6 \\[0.5ex] 3& 10& 8& 11& 12 \\[0.5ex] 4& 9& 12& 10& 11 \end{smallmatrix}\right)
&\quad
\left(\begin{smallmatrix} \times& 1& 2& 3& 4 \\[0.5ex] 1& 5& 6& 7& 8 \\[0.5ex] 2& 7& 5& 9& 10 \\[0.5ex] 3& 6& 8& 11& 12 \\[0.5ex] 4& 9& 12& 10& 11 \end{smallmatrix}\right)   \displaybreak[2] \\[2ex]
\left(\begin{smallmatrix} \times& 1& 2& 3& 4 \\[0.5ex] 1& 5& 6& 7& 8 \\[0.5ex] 2& 7& 5& 9& 10 \\[0.5ex] 3& 10& 8& 11& 12 \\[0.5ex] 4& 9& 12& 6& 11 \end{smallmatrix}\right)
&\quad
\left(\begin{smallmatrix} \times& 1& 2& 3& 4 \\[0.5ex] 1& 5& 6& 7& 8 \\[0.5ex] 2& 7& 9& 8& 6 \\[0.5ex] 3& 10& 11& 5& 12 \\[0.5ex] 4& 11& 12& 10& 9 \end{smallmatrix}\right)
&\quad
\left(\begin{smallmatrix} \times& 1& 2& 3& 4 \\[0.5ex] 1& 5& 6& 7& 8 \\[0.5ex] 2& 7& 9& 8& 6 \\[0.5ex] 3& 10& 11& 9& 12 \\[0.5ex] 4& 11& 12& 10& 5 \end{smallmatrix}\right)
&\quad
\left(\begin{smallmatrix} \times& 1& 2& 3& 4 \\[0.5ex] 1& 5& 6& 7& 8 \\[0.5ex] 2& 7& 9& 8& 10 \\[0.5ex] 3& 6& 11& 9& 12 \\[0.5ex] 4& 11& 12& 10& 5 \end{smallmatrix}\right)
&\quad
\left(\begin{smallmatrix} \times& 1& 2& 3& 4 \\[0.5ex] 1& 5& 6& 7& 8 \\[0.5ex] 2& 7& 9& 8& 10 \\[0.5ex] 3& 10& 11& 5& 12 \\[0.5ex] 4& 11& 12& 6& 9 \end{smallmatrix}\right)   \displaybreak[2] \\[2ex]
\left(\begin{smallmatrix} \times& 1& 2& 3& 4 \\[0.5ex] 1& 5& 6& 7& 8 \\[0.5ex] 2& 7& 9& 8& 10 \\[0.5ex] 3& 10& 11& 9& 12 \\[0.5ex] 4& 11& 12& 6& 5 \end{smallmatrix}\right)
&\quad
\left(\begin{smallmatrix} \times& 1& 2& 3& 4 \\[0.5ex] 1& 5& 6& 7& 8 \\[0.5ex] 2& 7& 9& 10& 6 \\[0.5ex] 3& 11& 5& 8& 12 \\[0.5ex] 4& 9& 12& 11& 10 \end{smallmatrix}\right)
&\quad
\left(\begin{smallmatrix} \times& 1& 2& 3& 4 \\[0.5ex] 1& 5& 6& 7& 8 \\[0.5ex] 2& 7& 9& 10& 6 \\[0.5ex] 3& 11& 8& 5& 12 \\[0.5ex] 4& 10& 12& 11& 9 \end{smallmatrix}\right)
&\quad
\left(\begin{smallmatrix} \times& 1& 2& 3& 4 \\[0.5ex] 1& 5& 6& 7& 8 \\[0.5ex] 2& 7& 9& 10& 6 \\[0.5ex] 3& 11& 8& 9& 12 \\[0.5ex] 4& 10& 12& 11& 5 \end{smallmatrix}\right)
&\quad
\left(\begin{smallmatrix} \times& 1& 2& 3& 4 \\[0.5ex] 1& 5& 6& 7& 8 \\[0.5ex] 2& 7& 9& 10& 11 \\[0.5ex] 3& 11& 5& 8& 12 \\[0.5ex] 4& 9& 12& 6& 10 \end{smallmatrix}\right) \\[2ex]
\left(\begin{smallmatrix} \times& 1& 2& 3& 4 \\[0.5ex] 1& 5& 6& 7& 8 \\[0.5ex] 2& 7& 9& 10& 11 \\[0.5ex] 3& 11& 8& 5& 12 \\[0.5ex] 4& 10& 12& 6& 9 \end{smallmatrix}\right)
&\quad
\left(\begin{smallmatrix} \times& 1& 2& 3& 4 \\[0.5ex] 1& 5& 6& 7& 8 \\[0.5ex] 2& 7& 9& 10& 11 \\[0.5ex] 3& 11& 8& 9& 12 \\[0.5ex] 4& 10& 12& 6& 5 \end{smallmatrix}\right)
&\quad
\left(\begin{smallmatrix} \times& 1& 2& 3& 4 \\[0.5ex] 1& 5& 6& 7& 8 \\[0.5ex] 2& 9& 5& 8& 10 \\[0.5ex] 3& 10& 11& 12& 6 \\[0.5ex] 4& 11& 7& 9& 12 \end{smallmatrix}\right)
\end{alignat*}
\end{matcases}

\begin{matcases}
For the pairings $\pi$ of the followig pairing matrices,
GAP determined that they are nondegenerate and that in the corresponding ULIE groups $\Gamma_\pi$,
the element $a_1a_2^{-1}$ has order $7$:
\[
\left(\begin{smallmatrix} \times& 1& 2& 3& 4 \\[0.5ex] 1& 5& 6& 7& 8 \\[0.5ex] 2& 7& 5& 9& 10 \\[0.5ex] 3& 6& 10& 11& 12 \\[0.5ex] 4& 12& 9& 8& 11 \end{smallmatrix}\right)
\quad
\left(\begin{smallmatrix} \times& 1& 2& 3& 4 \\[0.5ex] 1& 5& 6& 7& 8 \\[0.5ex] 2& 7& 9& 8& 10 \\[0.5ex] 3& 10& 11& 12& 6 \\[0.5ex] 4& 9& 12& 5& 11 \end{smallmatrix}\right)
\quad
\left(\begin{smallmatrix} \times& 1& 2& 3& 4 \\[0.5ex] 1& 5& 6& 7& 8 \\[0.5ex] 2& 7& 9& 10& 5 \\[0.5ex] 3& 11& 12& 8& 9 \\[0.5ex] 4& 10& 11& 6& 12 \end{smallmatrix}\right)
\]
\end{matcases}

\begin{matcases}
For the pairing $\pi$ corresponding to the pairing matrix
\[
\left(\begin{smallmatrix} \times& 1& 2& 3& 4 \\[0.5ex] 1& 5& 6& 7& 8 \\[0.5ex] 2& 6& 9& 8& 10 \\[0.5ex] 3& 11& 7& 12& 5 \\[0.5ex] 4& 10& 11& 9& 12 \end{smallmatrix}\right),
\]
our algorithm in GAP was not able to decide on degeneracy, but the relations imply that in the corresponding ULIE
group $\Gamma_\pi$, we have $(a_1a_3^{-1})^2=1$.
Indeed this folllows from only the relations implied by the elements of the matrix labelled with $5$--$8$, as we now show.
These relations are
\begin{align}
a_1^2&=a_3a_4 \label{eq:mat1rel5} \\
a_1a_2&=a_2a_1 \label{eq:mat1rel6} \\
a_1a_3&=a_3a_2 \label{eq:mat1rel7} \\
a_1a_4&=a_2a_3\,. \label{eq:mat1rel8}
\end{align}
Using~\eqref{eq:mat1rel5} to solve for $a_4$ and substituteing in~\eqref{eq:mat1rel8} yields
$a_1a_3^{-1}a_1^2=a_2a_3$ which with~\eqref{eq:mat1rel6} gives
\begin{equation}\label{eq:mat1*}
a_1a_2^{-1}a_3^{-1}a_1^2a_3^{-1}=1.
\end{equation}
From~\eqref{eq:mat1rel7} we get $a_2=a_3^{-1}a_1a_3$, and 
using $a_2^{-1}=a_3^{-1}a_1^{-1}a_3$ in~\eqref{eq:mat1*} yields
\[
a_1a_3^{-1}a_1a_3^{-1}=1,
\]
as desired.
\end{matcases}

\begin{matcases}
For the pairing corresponding to the pairing matrix
\[
\left(\begin{smallmatrix} \times& 1& 2& 3& 4 \\[0.5ex] 1& 5& 6& 7& 8 \\[0.5ex] 2& 6& 9& 10& 11 \\[0.5ex] 3& 10& 7& 12& 5 \\[0.5ex] 4& 11& 8& 9& 12 \end{smallmatrix}\right)
\]
our algorithm in GAP was not able to decide on degeneracy, but the relations imply that in the corresponding ULIE
group $\Gamma_\pi$, we have $(a_1a_4^{-1})^4=1$.
Indeed this folllows from only the relations implied by the elements of the matrix labelled with $5$--$8$ and $12$, as we now show.
These relations are
\begin{align}
a_1^2&=a_3a_4 \label{eq:mat2rel5} \\
a_1a_2&=a_2a_1 \label{eq:mat2rel6} \\
a_1a_3&=a_3a_2 \label{eq:mat2rel7} \\
a_1a_4&=a_4a_2\,. \label{eq:mat2rel8} \\
a_3^2&=a_4^2\,. \label{eq:mat2rel12}
\end{align}
Using~\eqref{eq:mat2rel5} to solve for $a_3$ and substituting into~\eqref{eq:mat2rel7} yields $a_2=a_4a_1a_4^{-1}$,
while from~\eqref{eq:mat2rel8} we get $a_2=a_4^{-1}a_1a_4$.
Thus, $a_4^2$ commutes with $a_1$ and, hence, also commutes with $a_2$.
Now
\[
(a_1a_4^{-1})^2=a_1a_4^{-1}a_1a_4^{-1}=a_1a_2a_4^{-2}
\]
and
\begin{equation}\label{eq:mat2*}
(a_1a_4^{-1})^4=a_1^2a_2^2a_4^{-4}.
\end{equation}
But using~\eqref{eq:mat2rel12} and again~\eqref{eq:mat2rel5} to solve for $a_3$, we get
\[
a_4^2=a_3^2=a_4^{-1}a_1^2a_4^{-1}a_1^2=(a_4^{-1}a_1a_4)^2a_4^{-1}a_1^2=a_2^2a_4^{-2}a_1^2,
\]
so $a_4^{-4}=a_1^{-2}a_2^{-2}$.
Using this in~\eqref{eq:mat2*} yields $(a_1a_4^{-1})^4=1$, as desired.
\end{matcases}

\begin{matcases}
For the pairing corresponding to the pairing matrix
\[
\left(\begin{smallmatrix} \times& 1& 2& 3& 4 \\[0.5ex] 1& 5& 6& 7& 8 \\[0.5ex] 2& 7& 8& 9& 10 \\[0.5ex] 3& 10& 5& 11& 12 \\[0.5ex] 4& 11& 12& 6& 9 \end{smallmatrix}\right),
\]
our algorithm in GAP was not able to decide on degeneracy, but the relations imply that in the corresponding ULIE
group $\Gamma_\pi$, we have $(a_1a_3^{-1})^4=1$.
Indeed, this follows from only the relations implied by the elements of the matrix labelled with $5$--$7$, $9$ and $11$, as we now show.
These relations are
\begin{align}
a_1^2&=a_3a_2 \label{eq:mat3rel5} \\
a_1a_2&=a_4a_3 \label{eq:mat3rel6} \\
a_1a_3&=a_2a_1 \label{eq:mat3rel7} \\
a_2a_3&=a_4^2. \label{eq:mat3rel9} \\
a_3^2&=a_4a_1. \label{eq:mat3rel11}
\end{align}
Using~\eqref{eq:mat3rel7} to solve for $a_2$ and ~\eqref{eq:mat3rel11} to solve for $a_4$ and substituting into the other relations,
we get, respectively
\begin{align}
a_1^3&=a_3a_1a_3 \label{eq:mat3rel5'} \\
a_1^2a_3a_1^{-1}&=a_3^2a_1^{-1}a_3 \label{eq:mat3rel6''} \\
a_1a_3a_1^{-1}a_3&=a_3^2a_1^{-1}a_3^2a_1^{-1}. \label{eq:mat3rel9''}
\end{align}
From~\eqref{eq:mat3rel6''} we get
$a_1^2a_3^2=a_3^2a_1^{-1}a_3a_1a_3$ so using~\eqref{eq:mat3rel5'} yields
\begin{equation}\label{eq:mat3rel13}
a_1^2a_3^2=a_3^2a_1^2.
\end{equation}
From~\eqref{eq:mat3rel9''} we get
$(a_3a_1a_3)a_1^{-1}(a_3a_1a_3)=a_3^3a_1^{-1}a_3^3$ and using~\eqref{eq:mat3rel5'} gives
\begin{equation}\label{eq:mat3rel14}
a_1^5=a_3^3a_1^{-1}a_3^3.
\end{equation}
In fact, one can show that $\Gamma_\pi$ is the finitely presented group with generators $a_1$ and $a_3$
and relations~\eqref{eq:mat3rel5'}, \eqref{eq:mat3rel13} and~\eqref{eq:mat3rel14}, but we will not bother with this.
Now using~\eqref{eq:mat3rel5'}, \eqref{eq:mat3rel13} and~\eqref{eq:mat3rel14}, we obtain
\begin{multline}\label{eq:mat3rel15}
a_1a_3^{-1}a_1a_3^{-1}=a_1a_3^{-1}(a_3^{-1}a_1^3a_3^{-1})a_3^{-1}
=a_1a_3^{-2}a_1^3a_3^{-2}=a_1^{-1}a_3^{-2}a_1^5a_3^{-2} \\
=a_1^{-1}a_3(a_3^{-3}a_1^5a_3^{-3})a_3=a_1^{-1}a_3a_1^{-1}a_3.
\end{multline}
Therefore, using~\eqref{eq:mat3rel15}, \eqref{eq:mat3rel5'} and~\eqref{eq:mat3rel14}, we get
\[
(a_1a_3^{-1})^4 
=a_1a_3^{-1}(a_1^{-1}a_3a_1^{-1}a_3)a_1a_3^{-1}
=a_1(a_3^{-1}a_1^{-1}a_3^{-1})a_3^2a_1^{-1}(a_3a_1a_3)a_3^{-2}
=a_1^{-2}a_3^2a_1^2a_3^{-2}
\]
and the last word is the identity element by~\eqref{eq:mat3rel13}.
\end{matcases}


\subsection{Case 5 by 5, $a\ne b$.}{$\;$}\newline

We now list all of the non--abelian infinite groups that appeared in the $5\times 5$ case
from pairing matrices that are not of the form~\eqref{eq:mat1234}
and we also present partial information about the other groups (abelian and/or finite) that appeared.

\subsubsection{Non--Amenable groups.}{$\;$}\newline

\begin{gp}
$\langle x,y:x^4=1,\,x^2y=yx^2\rangle\cong\bZ_4*_{\bZ_2}(\bZ\times\bZ_2),$
the amalgamated free product of $\bZ_4\cong\langle x:x^4=1\rangle$
and $\bZ\times\bZ_2\cong\langle y,z:z^2=1,\,yz=zy\rangle$
over $\bZ_2$ by the identification $x^2=z$,
 from the pairing matrix
\[
\left(\begin{smallmatrix}
\times& 1& 2& 3& 4 \\[0.5ex]
1& 5& 3& 2& 6 \\[0.5ex]
6& 4& 7& 8& 5  \\[0.5ex]
9& 10& 11& 12& 7 \\[0.5ex]
10& 9& 12& 11& 8
\end{smallmatrix}\right)
\]
\end{gp}

\begin{gp}
$\langle x,y,z:x^2=z^2=1,\,xy=yx,\,xz=zx\rangle\cong\bZ_2\times(\bZ*\bZ_2)$,
 from the pairing matrix
\[
\left(\begin{smallmatrix} \times& 1& 2& 3& 4 \\[0.5ex] 1& 5& 3& 2& 6 \\[0.5ex] 4& 6& 7& 8& 5 \\[0.5ex] 
  9& 10& 11& 12& 7 \\[0,5ex] 10& 9& 12& 11& 8 \end{smallmatrix}\right)
\]
\end{gp}

\subsubsection{Infinite amenable non--abelian groups}{$\;$}\newline

\begin{gp}
The infinite dihedral group
\[
\Dih_\infty=\bZ\rtimes_\alpha\bZ_2=\langle x,y:y^2=1,\,yxy=x^{-1}\rangle\cong\bZ_2*\bZ_2,
\]
where $\alpha(1)=-1$,
from the following pairing matrices:
\begin{alignat*}{4}
\left(\begin{smallmatrix} \times& 1& 2& 3& 4 \\[0.5ex] 1& 5& 3& 2& 6 \\[0.5ex] 7& 6& 4& 8& 5 \\[0.5ex] 
  9& 10& 11& 12& 7 \\[0.5ex] 10& 9& 12& 11& 8 \end{smallmatrix}\right)
&\quad
\left(\begin{smallmatrix} \times& 1& 2& 3& 4 \\[0.5ex] 1& 5& 3& 2& 6 \\[0.5ex] 7& 6& 4& 8& 9 \\[0.5ex] 
  10& 11& 5& 12& 7 \\[0.5ex] 11& 10& 12& 9& 8 \end{smallmatrix}\right)
&\quad
\left(\begin{smallmatrix} \times& 1& 2& 3& 4 \\[0.5ex] 1& 5& 3& 2& 6 \\[0.5ex] 7& 6& 4& 8& 9 \\[0.5ex] 
  10& 11& 9& 12& 7 \\[0.5ex] 11& 10& 12& 5& 8 \end{smallmatrix}\right)
&\quad
\left(\begin{smallmatrix} \times& 1& 2& 3& 4 \\[0.5ex] 1& 5& 3& 2& 6 \\[0.5ex] 4& 7& 6& 8& 5 \\[0.5ex] 
  9& 10& 11& 12& 7 \\[0.5ex] 10& 9& 12& 11& 8 \end{smallmatrix}\right)
&\quad
\left(\begin{smallmatrix} \times& 1& 2& 3& 4 \\[0.5ex] 1& 5& 3& 2& 6 \\[0.5ex] 4& 7& 6& 8& 9 \\[0.5ex] 
  10& 11& 5& 12& 7 \\[0.5ex] 11& 10& 12& 9& 8 \end{smallmatrix}\right) \displaybreak[2] \\[2ex]
\left(\begin{smallmatrix} \times& 1& 2& 3& 4 \\[0.5ex] 1& 5& 3& 2& 6 \\[0.5ex] 4& 7& 6& 8& 9 \\[0.5ex] 
  10& 11& 9& 12& 7 \\[0.5ex] 11& 10& 12& 5& 8 \end{smallmatrix}\right)
&\quad
\left(\begin{smallmatrix} \times& 1& 2& 3& 4 \\[0.5ex] 1& 5& 3& 6& 7 \\[0.5ex] 2& 8& 9& 10& 11 \\[0.5ex] 
  4& 11& 7& 12& 5 \\[0.5ex] 8& 6& 10& 9& 12 \end{smallmatrix}\right)
&\quad
\left(\begin{smallmatrix} \times& 1& 2& 3& 4 \\[0.5ex] 1& 5& 3& 6& 7 \\[0.5ex] 2& 8& 9& 10& 11 \\[0.5ex] 
  4& 11& 7& 12& 9 \\[0.5ex] 8& 6& 10& 5& 12 \end{smallmatrix}\right)
&\quad
\left(\begin{smallmatrix} \times& 1& 2& 3& 4 \\[0.5ex] 1& 5& 3& 6& 7 \\[0.5ex] 2& 8& 9& 10& 11 \\[0.5ex] 
  8& 6& 10& 5& 12 \\[0.5ex] 12& 7& 11& 4& 9 \end{smallmatrix}\right)
&\quad
\left(\begin{smallmatrix} \times& 1& 2& 3& 4 \\[0.5ex] 1& 5& 3& 6& 7 \\[0.5ex] 2& 8& 9& 10& 11 \\[0.5ex] 
  8& 6& 10& 9& 12 \\[0.5ex] 12& 7& 11& 4& 5 \end{smallmatrix}\right) \displaybreak[2] \\[2ex]
\left(\begin{smallmatrix} \times& 1& 2& 3& 4 \\[0.5ex] 1& 5& 3& 6& 7 \\[0.5ex] 4& 8& 7& 9& 5 \\[0.5ex] 
  6& 10& 11& 12& 8 \\[0.5ex] 10& 2& 12& 11& 9 \end{smallmatrix}\right)
&\quad
\left(\begin{smallmatrix} \times& 1& 2& 3& 4 \\[0.5ex] 1& 5& 3& 6& 7 \\[0.5ex] 4& 8& 7& 9& 10 \\[0.5ex] 
  6& 11& 5& 12& 8 \\[0.5ex] 11& 2& 12& 10& 9 \end{smallmatrix}\right)
&\quad
\left(\begin{smallmatrix} \times& 1& 2& 3& 4 \\[0.5ex] 1& 5& 3& 6& 7 \\[0.5ex] 4& 8& 7& 9& 10 \\[0.5ex] 
  6& 11& 10& 12& 8 \\[0.5ex] 11& 2& 12& 5& 9 \end{smallmatrix}\right)
&\quad
\left(\begin{smallmatrix} \times& 1& 2& 3& 4 \\[0.5ex] 1& 5& 3& 6& 7 \\[0.5ex] 6& 8& 5& 9& 10 \\[0.5ex] 
  8& 2& 9& 11& 12 \\[0.5ex] 12& 7& 10& 4& 11 \end{smallmatrix}\right)
&\quad
\left(\begin{smallmatrix} \times& 1& 2& 3& 4 \\[0.5ex] 1& 5& 3& 6& 7 \\[0.5ex] 6& 8& 9& 10& 11 \\[0.5ex] 
  8& 2& 10& 5& 12 \\[0.5ex] 12& 7& 11& 4& 9 \end{smallmatrix}\right) \displaybreak[2] \\[2ex]
\left(\begin{smallmatrix} \times& 1& 2& 3& 4 \\[0.5ex] 1& 5& 3& 6& 7 \\[0.5ex] 6& 8& 9& 10& 11 \\[0.5ex] 
  8& 2& 10& 9& 12 \\[0.5ex] 12& 7& 11& 4& 5 \end{smallmatrix}\right)
\end{alignat*}
\end{gp}

\begin{gp}
$\bZ_2\times\Dih_\infty$
from the pairing matrices
$\left(\begin{smallmatrix} \times& 1& 2& 3& 4 \\[0.5ex] 1& 5& 6& 7& 8 \\[0.5ex] 9& 6& 5& 4& 3 \\[0.5ex] 
10& 7& 11& 12& 2 \\[0.5ex] 11& 8& 10& 9& 12 \end{smallmatrix}\right)$
and
$\left(\begin{smallmatrix} \times& 1& 2& 3& 4 \\[0.5ex] 1& 5& 6& 7& 8 \\[0.5ex] 9& 6& 5& 4& 10 \\[0.5ex] 
10& 7& 11& 12& 9 \\[0.5ex] 11& 8& 3& 2& 12 \end{smallmatrix}\right)$.
\end{gp}

\begin{gp}
$\bZ\times\Dih_\infty$
from the pairing matrix
$\left(\begin{smallmatrix} \times& 1& 2& 3& 4 \\[0.5ex] 1& 5& 6& 7& 8 \\[0.5ex] 2& 6& 9& 10& 11 \\[0.5ex] 
  7& 3& 10& 5& 12 \\[0.5ex] 8& 4& 11& 12& 9 \end{smallmatrix}\right)$
\end{gp}

\begin{gp}
The semidirect product $\bZ_3\rtimes_\alpha\bZ=\langle x,y:y^3=1,\,xyx^{-1}=y^2\rangle$,
where $\alpha(1)=2$,
 from the following pairing matrices:
\begin{alignat*}{4}
\left(\begin{smallmatrix} \times& 1& 2& 3& 4 \\[0.5ex] 1& 2& 5& 4& 6 \\[0.5ex] 7& 5& 8& 6& 3 \\[0.5ex] 
  9& 10& 11& 8& 12 \\[0.5ex] 10& 11& 9& 12& 7 \end{smallmatrix}\right)
&\quad
\left(\begin{smallmatrix} \times& 1& 2& 3& 4 \\[0.5ex] 1& 2& 5& 4& 6 \\[0.5ex] 7& 8& 9& 5& 10 \\[0.5ex] 
  9& 7& 8& 11& 12 \\[0.5ex] 10& 12& 11& 6& 3 \end{smallmatrix}\right)
&\quad
\left(\begin{smallmatrix} \times& 1& 2& 3& 4 \\[0.5ex] 1& 2& 5& 4& 6 \\[0.5ex] 7& 8& 9& 6& 3 \\[0.5ex] 
  10& 11& 12& 8& 7 \\[0.5ex] 12& 10& 11& 9& 5 \end{smallmatrix}\right)
&\quad
\left(\begin{smallmatrix} \times& 1& 2& 3& 4 \\[0.5ex] 1& 5& 3& 6& 7 \\[0.5ex] 4& 7& 6& 2& 8 \\[0.5ex] 
  9& 10& 8& 11& 12 \\[0.5ex] 10& 12& 11& 5& 9 \end{smallmatrix}\right)
&\quad
\left(\begin{smallmatrix} \times& 1& 2& 3& 4 \\[0.5ex] 1& 5& 3& 6& 7 \\[0.5ex] 4& 8& 6& 2& 9 \\[0.5ex] 
  10& 11& 7& 5& 12 \\[0.5ex] 12& 10& 9& 8& 11 \end{smallmatrix}\right) \displaybreak[2] \\[2ex]
\left(\begin{smallmatrix} \times& 1& 2& 3& 4 \\[0.5ex] 1& 5& 3& 6& 7 \\[0.5ex] 4& 8& 6& 2& 9 \\[0.5ex] 
  10& 11& 8& 5& 12 \\[0.5ex] 12& 10& 9& 7& 11 \end{smallmatrix}\right)
&\quad
\left(\begin{smallmatrix} \times& 1& 2& 3& 4 \\[0.5ex] 1& 5& 3& 6& 7 \\[0.5ex] 5& 7& 6& 2& 8 \\[0.5ex] 
  9& 10& 8& 11& 12 \\[0.5ex] 10& 12& 11& 4& 9 \end{smallmatrix}\right)
&\quad
\left(\begin{smallmatrix} \times& 1& 2& 3& 4 \\[0.5ex] 1& 5& 3& 6& 7 \\[0.5ex] 5& 8& 6& 2& 9 \\[0.5ex] 
  10& 11& 7& 4& 12 \\[0.5ex] 12& 10& 9& 8& 11 \end{smallmatrix}\right)
&\quad
\left(\begin{smallmatrix} \times& 1& 2& 3& 4 \\[0.5ex] 1& 5& 3& 6& 7 \\[0.5ex] 5& 8& 6& 2& 9 \\[0.5ex] 
  10& 11& 8& 4& 12 \\[0.5ex] 12& 10& 9& 7& 11 \end{smallmatrix}\right)
&\quad
\left(\begin{smallmatrix} \times& 1& 2& 3& 4 \\[0.5ex] 5& 2& 3& 1& 6 \\[0.5ex] 7& 4& 5& 6& 8 \\[0.5ex] 
  8& 9& 10& 11& 12 \\[0.5ex] 12& 11& 9& 10& 7 \end{smallmatrix}\right) \displaybreak[2] \\[2ex]
\left(\begin{smallmatrix} \times& 1& 2& 3& 4 \\[0.5ex] 5& 2& 3& 1& 6 \\[0.5ex] 7& 4& 5& 8& 9 \\[0.5ex] 
  9& 6& 10& 11& 12 \\[0.5ex] 12& 11& 8& 10& 7 \end{smallmatrix}\right)
&\quad
\left(\begin{smallmatrix} \times& 1& 2& 3& 4 \\[0.5ex] 5& 2& 3& 1& 6 \\[0.5ex] 7& 4& 5& 8& 9 \\[0.5ex] 
  9& 8& 10& 11& 12 \\[0.5ex] 12& 11& 6& 10& 7 \end{smallmatrix}\right)
&\quad
\left(\begin{smallmatrix} \times& 1& 2& 3& 4 \\[0.5ex] 5& 2& 3& 1& 6 \\[0.5ex] 7& 4& 8& 6& 9 \\[0.5ex] 
  9& 10& 11& 5& 12 \\[0.5ex] 12& 8& 10& 11& 7 \end{smallmatrix}\right)
&\quad
\left(\begin{smallmatrix} \times& 1& 2& 3& 4 \\[0.5ex] 5& 2& 3& 1& 6 \\[0.5ex] 7& 4& 8& 6& 9 \\[0.5ex] 
  9& 10& 11& 8& 12 \\[0.5ex] 12& 5& 10& 11& 7 \end{smallmatrix}\right)
&\quad
\left(\begin{smallmatrix} \times& 1& 2& 3& 4 \\[0.5ex] 5& 2& 3& 1& 6 \\[0.5ex] 7& 4& 8& 9& 10 \\[0.5ex] 
  10& 6& 11& 5& 12 \\[0.5ex] 12& 8& 9& 11& 7 \end{smallmatrix}\right) \displaybreak[2] \\[2ex]
\left(\begin{smallmatrix} \times& 1& 2& 3& 4 \\[0.5ex] 5& 2& 3& 1& 6 \\[0.5ex] 7& 4& 8& 9& 10 \\[0.5ex] 
  10& 6& 11& 8& 12 \\[0.5ex] 12& 5& 9& 11& 7 \end{smallmatrix}\right)
&\quad
\left(\begin{smallmatrix} \times& 1& 2& 3& 4 \\[0.5ex] 5& 2& 3& 1& 6 \\[0.5ex] 7& 4& 8& 9& 10 \\[0.5ex] 
  10& 9& 11& 5& 12 \\[0.5ex] 12& 8& 6& 11& 7 \end{smallmatrix}\right)
&\quad
\left(\begin{smallmatrix} \times& 1& 2& 3& 4 \\[0.5ex] 5& 2& 3& 1& 6 \\[0.5ex] 7& 4& 8& 9& 10 \\[0.5ex] 
  10& 9& 11& 8& 12 \\[0.5ex] 12& 5& 6& 11& 7 \end{smallmatrix}\right)
&\quad
\left(\begin{smallmatrix} \times& 1& 2& 3& 4 \\[0.5ex] 5& 2& 6& 4& 7 \\[0.5ex] 8& 3& 7& 9& 10 \\[0.5ex] 
  9& 11& 12& 10& 8 \\[0.5ex] 12& 6& 1& 11& 5 \end{smallmatrix}\right)
&\quad
\left(\begin{smallmatrix} \times& 1& 2& 3& 4 \\[0.5ex] 5& 2& 6& 4& 7 \\[0.5ex] 8& 3& 9& 10& 11 \\[0.5ex] 
  10& 7& 12& 11& 8 \\[0.5ex] 12& 6& 1& 9& 5 \end{smallmatrix}\right) \displaybreak[2] \\[2ex]
\left(\begin{smallmatrix} \times& 1& 2& 3& 4 \\[0.5ex] 5& 2& 6& 4& 7 \\[0.5ex] 8& 3& 9& 10& 11 \\[0.5ex] 
  10& 9& 12& 11& 8 \\[0.5ex] 12& 6& 1& 7& 5 \end{smallmatrix}\right)
&\quad
\left(\begin{smallmatrix} \times& 1& 2& 3& 4 \\[0.5ex] 5& 2& 6& 4& 7 \\[0.5ex] 8& 5& 7& 9& 10 \\[0.5ex] 
  9& 11& 12& 10& 8 \\[0.5ex] 12& 6& 1& 11& 3 \end{smallmatrix}\right)
&\quad
\left(\begin{smallmatrix} \times& 1& 2& 3& 4 \\[0.5ex] 5& 2& 6& 4& 7 \\[0.5ex] 8& 5& 9& 10& 11 \\[0.5ex] 
  10& 7& 12& 11& 8 \\[0.5ex] 12& 6& 1& 9& 3 \end{smallmatrix}\right)
&\quad
\left(\begin{smallmatrix} \times& 1& 2& 3& 4 \\[0.5ex] 5& 2& 6& 4& 7 \\[0.5ex] 8& 5& 9& 10& 11 \\[0.5ex] 
  10& 9& 12& 11& 8 \\[0.5ex] 12& 6& 1& 7& 3 \end{smallmatrix}\right)
&\quad
\left(\begin{smallmatrix} \times& 1& 2& 3& 4 \\[0.5ex] 5& 2& 6& 7& 8 \\[0.5ex] 6& 7& 8& 4& 5 \\[0.5ex] 
  9& 3& 10& 11& 12 \\[0.5ex] 10& 11& 12& 1& 9 \end{smallmatrix}\right) \displaybreak[2] \\[2ex]
\left(\begin{smallmatrix} \times& 1& 2& 3& 4 \\[0.5ex] 5& 2& 6& 7& 8 \\[0.5ex] 6& 9& 7& 5& 3 \\[0.5ex] 
  10& 4& 11& 8& 12 \\[0.5ex] 11& 12& 9& 10& 1 \end{smallmatrix}\right)
&\quad
\left(\begin{smallmatrix} \times& 1& 2& 3& 4 \\[0.5ex] 5& 2& 6& 7& 8 \\[0.5ex] 6& 9& 7& 5& 3 \\[0.5ex] 
  10& 4& 11& 9& 12 \\[0.5ex] 11& 12& 8& 10& 1 \end{smallmatrix}\right)
\end{alignat*}
\end{gp}

\begin{gp}
The semidirect product
\[
\bZ\rtimes_\alpha(\bZ_2\times\bZ_2)=\langle x,y,z:x^2=y^2=1,\,xy=yx,\,xzx=z^{-1},\,yzy=z^{-1}\rangle,
\]
from the pairing matrix
$\left(\begin{smallmatrix} \times& 1& 2& 3& 4 \\[0.5ex] 1& 5& 3& 2& 6 \\[0.5ex] 4& 6& 7& 8& 9 \\[0.5ex] 
  10& 11& 5& 12& 7 \\[0.5ex] 11& 10& 12& 9& 8 \end{smallmatrix}\right)$
\end{gp}

\begin{gp}
The semidirect product
\[
\bZ\rtimes_\alpha\Dih_4=\langle x,y,z:x^2=y^4=1,\,xyx=y^{-1},\,xzx=z^{-1},\,yzy=z^{-1}\rangle,
\]
from the pairing matrix
$\left(\begin{smallmatrix} \times& 1& 2& 3& 4 \\[0.5ex] 1& 5& 6& 7& 8 \\[0.5ex] 9& 6& 5& 10& 11 \\[0.5ex] 
  10& 8& 3& 2& 12 \\[0.5ex] 11& 7& 4& 12& 9 \end{smallmatrix}\right)$
\end{gp}

\begin{gp}
The semidirect product
$\langle x,y:y^4=1,\,yxy^{-1}=x^{-1}\rangle\cong\bZ\rtimes_\alpha\bZ_4$, where $\alpha(1)=-1$,
from the pairing matrices:
\begin{alignat*}{4}
\left(\begin{smallmatrix} \times& 1& 2& 3& 4 \\[0.5ex] 1& 5& 3& 2& 6 \\[0.5ex] 6& 4& 7& 8& 9 \\[0.5ex] 
  10& 11& 5& 12& 8 \\[0.5ex] 11& 10& 12& 9& 7 \end{smallmatrix}\right)
&\quad
\left(\begin{smallmatrix} \times& 1& 2& 3& 4 \\[0.5ex] 1& 5& 3& 6& 7 \\[0.5ex] 2& 8& 5& 9& 10 \\[0.5ex] 
  8& 6& 9& 11& 12 \\[0.5ex] 12& 7& 10& 4& 11 \end{smallmatrix}\right)
&\quad
\left(\begin{smallmatrix} \times& 1& 2& 3& 4 \\[0.5ex] 1& 5& 3& 6& 7 \\[0.5ex] 2& 8& 9& 10& 11 \\[0.5ex] 
  7& 4& 11& 12& 5 \\[0.5ex] 8& 6& 10& 9& 12 \end{smallmatrix}\right)
&\quad
\left(\begin{smallmatrix} \times& 1& 2& 3& 4 \\[0.5ex] 5& 2& 1& 6& 7 \\[0.5ex] 6& 8& 9& 7& 3 \\[0.5ex] 
  10& 4& 11& 8& 12 \\[0.5ex] 12& 11& 5& 9& 10 \end{smallmatrix}\right)
&\quad
\left(\begin{smallmatrix} \times& 1& 2& 3& 4 \\[0.5ex] 5& 2& 1& 6& 7 \\[0.5ex] 6& 8& 9& 10& 3 \\[0.5ex] 
  11& 4& 7& 8& 12 \\[0.5ex] 12& 10& 5& 9& 11 \end{smallmatrix}\right) \displaybreak[2] \\[2ex]
\left(\begin{smallmatrix} \times& 1& 2& 3& 4 \\[0.5ex] 5& 2& 1& 6& 7 \\[0.5ex] 6& 8& 9& 10& 3 \\[0.5ex] 
  11& 4& 10& 8& 12 \\[0.5ex] 12& 7& 5& 9& 11 \end{smallmatrix}\right)
\end{alignat*}
\end{gp}

\begin{gp}
The semidirect product
$\langle x,y:x^4=1,\,yxy^{-1}=x^3\rangle\cong\bZ_4\rtimes_\alpha\bZ$
where $\alpha(1)=3$, from pairing matrices
\[
\left(\begin{smallmatrix} \times& 1& 2& 3& 4 \\[0.5ex] 1& 5& 3& 6& 7 \\[0.5ex] 2& 8& 9& 10& 11 \\[0.5ex] 
  8& 6& 10& 5& 12 \\[0.5ex] 11& 7& 4& 12& 9 \end{smallmatrix}\right)
\quad
\left(\begin{smallmatrix} \times& 1& 2& 3& 4 \\[0.5ex] 1& 5& 3& 6& 7 \\[0.5ex] 6& 8& 9& 10& 11 \\[0.5ex] 
  7& 4& 11& 12& 5 \\[0.5ex] 8& 2& 10& 9& 12 \end{smallmatrix}\right)
\]
\end{gp}

\begin{gp}
$\bZ_2\times(\bZ_4\rtimes_\alpha\bZ)=\langle x,y,z:x^2=1,\,xy=yx,\,xz=zx,\,z^4=1,\,yzy^{-1}=z^{-1}\rangle$,
where $\alpha(1)=3$,
from the pairing matrix
\[
\left(\begin{smallmatrix} \times& 1& 2& 3& 4 \\[0.5ex] 1& 5& 6& 7& 8 \\[0.5ex] 2& 6& 5& 9& 10 \\[0.5ex] 
  3& 7& 9& 11& 12 \\[0.5ex] 12& 8& 10& 4& 11 \end{smallmatrix}\right)
\]
\end{gp}

\begin{gp}
$\langle x,y,z:z^2=x^5=1,\,zx=xz,\,zy=yz,\,yxy^{-1}=x^2\rangle
\cong\bZ_2\times(\bZ_5\rtimes_\alpha\bZ)$, where $\alpha(1)=2$,
from pairing matrices
\[
\left(\begin{smallmatrix} \times& 1& 2& 3& 4 \\[0.5ex] 1& 5& 6& 7& 8 \\[0.5ex] 2& 6& 5& 9& 10 \\[0.5ex] 
  11& 7& 9& 4& 12 \\[0.5ex] 12& 8& 10& 11& 3 \end{smallmatrix}\right)
\quad
\left(\begin{smallmatrix} \times& 1& 2& 3& 4 \\[0.5ex] 1& 5& 6& 7& 8 \\[0.5ex] 2& 6& 5& 9& 10 \\[0.5ex] 
  11& 7& 9& 12& 3 \\[0.5ex] 12& 8& 10& 4& 11 \end{smallmatrix}\right)
\]
\end{gp}

\begin{gp}
$\langle x,y,z:z^2=x^5=1,\,zy=yz,\,yxy^{-1}=x^2,\,zxz=x^{-1}\rangle
\cong\bZ_5\rtimes_\alpha(\bZ\times\bZ_2)$, where $\alpha_{(1,0)}(1)=2$
and $\alpha_{(0,1)}(1)=4$,
from pairing matrix
\[
\left(\begin{smallmatrix} \times& 1& 2& 3& 4 \\[0.5ex] 1& 5& 6& 7& 8 \\[0.5ex] 2& 6& 5& 9& 10 \\[0.5ex] 
  11& 7& 10& 4& 12 \\[0.5ex] 12& 8& 9& 11& 3 \end{smallmatrix}\right)
\]
\end{gp}

\begin{gp}
The semidirect product
$(\bZ_2\times\bZ_2\times\bZ_2)\rtimes_\alpha\bZ$,
where $\alpha$ is the order--four automorphism of $\bZ_2\times\bZ_2\times\bZ_2$
determined by
\begin{align*}
\alpha:&(1,0,0)\mapsto(0,1,0) \\
&(0,1,0)\mapsto(0,0,1) \\
&(0,0,1)\mapsto(1,1,1),
\end{align*}
from pairing matrices
\[
\left(\begin{smallmatrix} \times& 1& 2& 3& 4 \\[0.5ex] 1& 5& 6& 7& 8 \\[0.5ex] 2& 6& 5& 9& 10 \\[0.5ex] 
  7& 4& 9& 11& 12 \\[0.5ex] 8& 3& 10& 12& 11 \end{smallmatrix}\right)
\quad
\left(\begin{smallmatrix} \times& 1& 2& 3& 4 \\[0.5ex] 1& 5& 6& 7& 8 \\[0.5ex] 2& 6& 5& 9& 10 \\[0.5ex] 
  7& 4& 10& 11& 12 \\[0.5ex] 8& 3& 9& 12& 11 \end{smallmatrix}\right)
\]
A quick analysis (using GAP) shows that these pairing matrices yield the group
\[
\Gamma=\langle x,y:y^2=1,\,(xyx^{-1}y)^2=1,\,yx^3y=xyxyx\rangle.
\]
The relations $y^2=1$ and $(xyx^{-1}y)^2=1$ imply that the elements $y$ and $xyx^{-1}$
both have squares equal to the identity and they commute with each other.
From $x^3=yxyxyxy$ we get
$yx^3yx^{-3}=(xyx^{-1})(x^2yx^{-2})$, and since the elements $xyx^{-1}$ and $x^2yx^{-2}$ both have
squares equal to the identity and commute with each other, the same applies to $y$ and $x^3yx^{-3}$.
Let $z_i=x^iyx^{-i}$ for $i=0,1,2,3$.
Then we have $z_iz_{i+1}=z_{i+1}z_i$, with $i+1$ taken modulo $4$, for all $i=0,1,2,3$.
We also have $z_0z_3=z_1z_2$, so $z_3=z_0z_1z_2=z_0z_2z_1$ and $z_3=z_3^{-1}=z_2z_1z_0=z_2z_0z_1$,
so $z_0z_2=z_2z_0$.
Thus, $z_0,z_1,z_2,z_3$ all commute and satisfy $z_3=z_0z_1z_2$.
We clearly have $xz_ix^{-1}=z_{i+1}$ for $i=0,1,2$, while
$xz_3x^{-1}=x(z_0z_1z_2)x^{-1}=z_1z_2z_3=z_0$.
Finally, the relation $x^3=yxyxyxy$ becomes $x^3=z_0z_1z_2z_3x^3$.
Therefore, we get that $\Gamma$ is isomorphic to the group
\begin{align*}
\langle x,z_0,z_1,z_2,z_3:z_i^2=1,\,z_0z_1z_2z_3=1,\,xz_ix^{-1}=z_{i+1}\rangle
\end{align*}
where the subscript $i+1$ is taken modulo $4$.
We easily recognize this as the semidirect product described above.
\end{gp}

\begin{gp}
$\bZ_2\times((\bZ_2\times\bZ_2)\rtimes_\alpha\bZ)$,
where $\alpha$ is the order--three
automorphism of $\bZ_2\times\bZ_2$ that permutes the nontrivial elements in a cycle,
from pairing matrix
\[
\left(\begin{smallmatrix} \times& 1& 2& 3& 4 \\[0.5ex] 1& 5& 6& 7& 8 \\[0.5ex] 2& 6& 5& 9& 10 \\[0.5ex] 
  7& 9& 3& 11& 12 \\[0.5ex] 8& 10& 4& 12& 11 \end{smallmatrix}\right)
\]
A quick analysis (using GAP) shows that this pairing matrix yields the group
\[
\Gamma=\bZ_2\times\langle x,y:\mid y^2=1,\,x^2=yxyxy\rangle.
\]
We have the relations
$xyx=yx^2y$, then $xyx^{-1}=yx^2yx^{-2}$.
So the two elements $y$ and $x^2yx^{=2}$ whose squares are the identity
commute, as their product $xyx^{-1}$ also has square equal to the identity.
But then also $y$ and $xyx^{-1}$ commute, and the three elements $(y,xyx^{-1},x^2yx^{-2})$
are the nontrivial elements of a copy of the group $\bZ_2\times\bZ_2$.
We also have
$x^4=(yxyxy)(yxyxy)=yxyx^2yxy=yx(xyx)xy=yx^2yx^2y$,
so $x^2yx^2=yx^4y$ and $x^2yx^{-2}=yx^4yx^{-4}$.
Since $x^2yx^{-2}=yxyx^{-1}$, we also get $xyx^{-1}=x^4yx^{-4}$, and, therefore, $x^3$ commutes with $y$.
If we write $z_1=y$, $z_2=xyx^{-1}$ and $z_3=x^2yx^{-2}$, then the defining relation $x^2=yxyxy$ becomes
$x^2=z_1z_2z_3x^2$, and we have
\[
\Gamma
\cong\bZ_2\times\langle x,z_1,z_2,z_3:z_i^2=1,\,xz_ix^{-1}=z_{i+1},\,z_1z_2=z_3\rangle,
\]
where the $i+1$ in the subscript is to be taken modulo $3$.
We recognize this as the $\bZ_2$ times the semidirect product described above.
\end{gp}

\begin{gp}
$\langle x,y:x^2y=yx^2,\,y^2=1\rangle\cong\bZ*_\bZ(\bZ\times\bZ_2)$,
the free product of $\bZ=\langle x\rangle$
and $\bZ\times\bZ_2=\langle y,z:yz=zy,\,y^2=1\rangle$
with amalgamation over $\bZ$ by the identification $x^2=z$,
 from the following pairing matrix
\[
\left(\begin{smallmatrix}
\times& 1& 2& 3& 4 \\[0.5ex]
1& 5& 3& 6& 7 \\[0.5ex]
2& 8& 5& 9& 10 \\[0.5ex]
4& 10& 7& 11& 12 \\[0.5ex]
8& 6& 9& 12& 11
\end{smallmatrix}\right)
\]
\end{gp}

\begin{gp}
$\langle x,y:x^4=y^2=1,yx^2=x^2y\rangle\cong(\bZ_2\times\bZ_2)*_{\bZ_2}\bZ_4$,
the free product of $\bZ_2\times\bZ_2=\langle y,z:y^2=z^2=1,\,yz=zy\rangle$
and $\bZ_4=\langle x:x^4=1\rangle$ with amalgamation over $\bZ_2$,
by the identification $x^2=z$,
from the pairing matrices
\begin{alignat*}{4}
\left(\begin{smallmatrix} \times& 1& 2& 3& 4 \\[0.5ex] 1& 5& 3& 2& 6 \\[0.5ex] 6& 4& 7& 8& 9 \\[0.5ex] 
  10& 11& 5& 12& 7 \\[0.5ex] 11& 10& 12& 9& 8 \end{smallmatrix}\right)
&\quad
\left(\begin{smallmatrix} \times& 1& 2& 3& 4 \\[0.5ex] 1& 5& 6& 7& 8 \\[0.5ex] 9& 6& 3& 10& 5 \\[0.5ex] 
  10& 8& 11& 9& 12 \\[0.5ex] 12& 7& 4& 11& 2 \end{smallmatrix}\right)
&\quad
\left(\begin{smallmatrix} \times& 1& 2& 3& 4 \\[0.5ex] 1& 5& 6& 7& 8 \\[0.5ex] 9& 6& 3& 10& 11 \\[0.5ex] 
  10& 8& 5& 9& 12 \\[0.5ex] 12& 7& 4& 11& 2 \end{smallmatrix}\right)
&\quad
\left(\begin{smallmatrix} \times& 1& 2& 3& 4 \\[0.5ex] 1& 5& 6& 7& 8 \\[0.5ex] 9& 6& 3& 10& 11 \\[0.5ex] 
  10& 8& 11& 9& 12 \\[0.5ex] 12& 7& 4& 5& 2 \end{smallmatrix}\right)
&\quad
\left(\begin{smallmatrix} \times& 1& 2& 3& 4 \\[0.5ex] 5& 2& 1& 6& 7 \\[0.5ex] 6& 8& 9& 7& 3 \\[0.5ex] 
  10& 4& 11& 9& 12 \\[0.5ex] 12& 11& 5& 8& 10 \end{smallmatrix}\right) \displaybreak[2] \\[2ex]
\left(\begin{smallmatrix} \times& 1& 2& 3& 4 \\[0.5ex] 5& 2& 1& 6& 7 \\[0.5ex] 6& 8& 9& 10& 3 \\[0.5ex] 
  11& 4& 7& 9& 12 \\[0.5ex] 12& 10& 5& 8& 11 \end{smallmatrix}\right)
&\quad
\left(\begin{smallmatrix} \times& 1& 2& 3& 4 \\[0.5ex] 5& 2& 1& 6& 7 \\[0.5ex] 6& 8& 9& 10& 3 \\[0.5ex] 
  11& 4& 10& 9& 12 \\[0.5ex] 12& 7& 5& 8& 11 \end{smallmatrix}\right)
\end{alignat*}
\end{gp}

\begin{gp}\label{gp:oldN=10}
$\Gamma=\langle x,y:y^2=1,\,x^2y=yx^2,\,(xy)^2=(yx)^2\rangle$
has center $Z(\Gamma)\cong\bZ\times\bZ_2$ generated by $\{x^2,(xy)^2x^{-2}\}$
with quotient $\Gamma/Z(\Gamma)\cong\bZ_2\times\bZ_2$,
from the pairing matrices
\begin{alignat*}{4}
\left(\begin{smallmatrix} \times& 1& 2& 3& 4 \\[0.5ex] 1& 2& 5& 6& 7 \\[0.5ex] 3& 6& 8& 9& 10 \\[0.5ex] 
  4& 10& 11& 7& 12 \\[0.5ex] 9& 5& 12& 8& 11 \end{smallmatrix}\right)
&\quad
\left(\begin{smallmatrix} \times& 1& 2& 3& 4 \\[0.5ex] 1& 2& 5& 6& 7 \\[0.5ex] 3& 8& 9& 4& 10 \\[0.5ex] 
  6& 10& 11& 7& 12 \\[0.5ex] 8& 9& 12& 5& 11 \end{smallmatrix}\right)
&\quad
\left(\begin{smallmatrix} \times& 1& 2& 3& 4 \\[0.5ex] 1& 5& 3& 6& 7 \\[0.5ex] 2& 4& 8& 9& 10 \\[0.5ex] 
  5& 9& 6& 11& 12 \\[0.5ex] 8& 10& 7& 12& 11 \end{smallmatrix}\right)
&\quad
\left(\begin{smallmatrix} \times& 1& 2& 3& 4 \\[0.5ex] 1& 5& 3& 6& 7 \\[0.5ex] 2& 8& 9& 10& 11 \\[0.5ex] 
  4& 7& 12& 11& 5 \\[0.5ex] 8& 6& 10& 9& 12 \end{smallmatrix}\right)
&\quad
\left(\begin{smallmatrix} \times& 1& 2& 3& 4 \\[0.5ex] 1& 5& 3& 6& 7 \\[0.5ex] 2& 8& 9& 10& 11 \\[0.5ex] 
  4& 12& 11& 7& 9 \\[0.5ex] 8& 6& 10& 5& 12 \end{smallmatrix}\right) \displaybreak[2] \\[2ex]
\left(\begin{smallmatrix} \times& 1& 2& 3& 4 \\[0.5ex] 1& 5& 3& 6& 7 \\[0.5ex] 4& 7& 8& 9& 5 \\[0.5ex] 
  6& 10& 11& 12& 9 \\[0.5ex] 10& 2& 12& 11& 8 \end{smallmatrix}\right)
\end{alignat*}
\end{gp}

\begin{gp}
$\langle x,y,z:y^2=z^2=1,\,x^2y=yx^2,\,(xy)^2=(yx)^2,\,xz=zx,\,yz=zy\rangle\cong\bZ_2\times\Gamma$,
where $\Gamma$ is the group from~\ref{gp:oldN=10},
from pairing matrices
\[
\left(\begin{smallmatrix} \times& 1& 2& 3& 4 \\[0.5ex] 1& 5& 6& 7& 8 \\[0.5ex] 2& 6& 5& 9& 10 \\[0.5ex] 
  7& 3& 9& 11& 12 \\[0.5ex] 8& 4& 10& 12& 11 \end{smallmatrix}\right)
\quad
\left(\begin{smallmatrix} \times& 1& 2& 3& 4 \\[0.5ex] 1& 5& 6& 7& 8 \\[0.5ex] 2& 6& 5& 9& 10 \\[0.5ex] 
  7& 3& 10& 11& 12 \\[0.5ex] 8& 4& 9& 12& 11 \end{smallmatrix}\right)
\]
\end{gp}

\begin{gp}
$\Gamma=\langle x,y,z:x^2=1,\,xy=yx,\,yz=zy,\,z^2=y^2,\,(xz)^2=(zx)^2\rangle$
has center $Z(\Gamma)\cong\bZ\times\bZ_2$ generated by $\{y,(xz)^2y^{-2}\}$
with quotient $\Gamma/Z(\Gamma)\cong\bZ_2\times\bZ_2$,
from the following pairing matrices:
\[
\left(\begin{smallmatrix} \times& 1& 2& 3& 4 \\[0.5ex] 1& 5& 6& 7& 8 \\[0.5ex] 2& 6& 5& 9& 10 \\[0.5ex] 
  3& 7& 9& 11& 12 \\[0.5ex] 8& 4& 10& 12& 11 \end{smallmatrix}\right)
\]
\end{gp}

\begin{gp}
$\Gamma=\langle x,y:y^2=1,\,yx^2y=x^{-2},\,(xy)^2=(yx)^2\rangle$
with center $Z(\Gamma)\cong\bZ\times\bZ_2$ generated by $\{x^4,(yx)^2\}$
and quotient $\Gamma/Z(\Gamma)\cong\Dih_4$,
from the pairing matrices:
\[
\left(\begin{smallmatrix} \times& 1& 2& 3& 4 \\[0.5ex] 1& 5& 3& 2& 6 \\[0.5ex] 4& 6& 7& 8& 9 \\[0.5ex] 
  10& 11& 5& 12& 8 \\[0.5ex] 11& 10& 12& 9& 7 \end{smallmatrix}\right)
\]
\end{gp}

\begin{gp}
$\Gamma=\langle x,y:y^4=1,\,xy^2=y^2x,\,x^2y=yx^2,\,(xy)^2=(yx)^2\rangle$
with center $Z(\Gamma)\cong\bZ\times\bZ_2\times\bZ_2$ generated by $\{x^2,y^2,(xy)^2x^{-2}\}$
and quotient $\Gamma/Z(\Gamma)\cong\bZ_2\times\bZ_2$,
from the pairing matrices:
\[
\left(\begin{smallmatrix} \times& 1& 2& 3& 4 \\[0.5ex] 1& 5& 6& 7& 8 \\[0.5ex] 2& 6& 5& 9& 10 \\[0.5ex] 
  3& 7& 9& 11& 12 \\[0.5ex] 12& 10& 8& 4& 11 \end{smallmatrix}\right)
\quad
\left(\begin{smallmatrix} \times& 1& 2& 3& 4 \\[0.5ex] 1& 5& 6& 7& 8 \\[0.5ex] 6& 2& 5& 9& 10 \\[0.5ex] 
  7& 3& 10& 11& 12 \\[0.5ex] 8& 4& 9& 12& 11 \end{smallmatrix}\right)
\]
\end{gp}

\begin{gp}
The group $\Gamma=\langle x,y:x^2=1,\,xy^3=y^3x,\,(xyxy^{-1})^2=1\rangle$
has center $Z(\Gamma)\cong\bZ\times\bZ_2$, generated by $\{y^3,xyxyxy^{-2}\}$,
and $\Gamma/Z(\Gamma)$ is isomorphic to the group $A_4$ of order $12$.
This group arises from the pairing matrices:
\[
\left(\begin{smallmatrix} \times& 1& 2& 3& 4 \\[0.5ex] 1& 5& 6& 7& 8 \\[0.5ex] 2& 6& 5& 9& 10 \\[0.5ex] 
  7& 9& 4& 11& 12 \\[0.5ex] 8& 10& 3& 12& 11 \end{smallmatrix}\right)
\quad
\left(\begin{smallmatrix} \times& 1& 2& 3& 4 \\[0.5ex] 1& 5& 6& 7& 8 \\[0.5ex] 2& 6& 5& 9& 10 \\[0.5ex] 
  7& 10& 3& 11& 12 \\[0.5ex] 8& 9& 4& 12& 11 \end{smallmatrix}\right)
\quad
\left(\begin{smallmatrix} \times& 1& 2& 3& 4 \\[0.5ex] 1& 5& 6& 7& 8 \\[0.5ex] 2& 6& 5& 9& 10 \\[0.5ex] 
  7& 10& 4& 11& 12 \\[0.5ex] 8& 9& 3& 12& 11 \end{smallmatrix}\right)
\]
\end{gp}

\subsubsection{Infinite abelian groups}{$\;$}\newline
There are in the $5\times 5$ case
$78$ inequivalent pairing matrices
not of the form~\eqref{eq:mat1234} whose corresponding groups are infinite
abelian groups.

\subsubsection{Finite groups}{$\;$}\newline
There are $2741$ inequivalent $5\times 5$ pairing matrices not of the form~\eqref{eq:mat1234}
that yield finite groups.
The following table summarizes these
according to the groups' orders and whether they are abelian or not.

\vskip2ex
\noindent
\begin{tabular}{|r|r|r|r|r|r|r|r|r|r|r|r|r|r|r|r|r|r|r|r|}
\hline
order: &           6 &   8 & 10 & 11 & 12 & 13 & 14 & 15 & 16 & 17 & 18 & 19 & 20 & total \\
\hline\hline
\# abelian: &   1606 & 274 & 54 & 21 & 98 &  7 & 12 &  3 & 16 &  7 &  2 &  2 &    & 2102 \\
\hline
\# nonabelian: & 558 &  84 &    &    & 46 &    &  4 &    & 43 &    &  2 &    &  2 &  739 \\
\hline 
\end{tabular}

\vskip2ex
In fact, here is the list of all nonabelian finite groups obtained, and their frequencies.
The GAP code is the group's identifier in GAP's small groups library, which is a pair of
numbers, the first of which is the order of the group.

\vskip2ex
\noindent
\begin{tabular}{|l|l|r||l|l|r|}
\hline
GAP code & Description of Group & freq. & GAP code & Description of Group & freq. \\[0.5ex]
\hline
\quad[6,1] & symmetric group $S_3$ & $558$ & \quad[16,6] & $\bZ_8\rtimes\bZ_2$ & $4$ \\[0.5ex]
\hline
\quad[8,3] & dihedral group $\Dih_4$ & $56$ & \quad[16,7] & dihedral group $\Dih_8$ & $3$ \\[0.5ex]
\hline
\quad[8,4] & quaternion group $Q_8$ & $28$ & \quad[16,8] & quasidihedral group & $9$ \\[0.5ex]
\hline
\quad[12,1] & $\bZ_3\rtimes\bZ_4$ & $26$ & \quad[16,9] & quaternion group $Q_{16}$ & $4$ \\[0.5ex]
\hline
\quad[12,3] & alternating group $A_4$ & $12$ & \quad[16,11] & $\bZ_2\times\Dih_4$ & $2$ \\[0.5ex]
\hline
\quad[12,4] & dihedral group $\Dih_6$ & $8$ & \quad[16,12] & $\bZ_2\times Q_8$ & $1$ \\[0.5ex]
\hline
\quad[14,1] & dihedral group $\Dih_7$ & $4$ & \quad[16,13] & $(\bZ_4\times\bZ_2)\rtimes\bZ_2$ & $7$ \\[0.5ex]
\hline
\quad[16,3] & $(\bZ_4\times\bZ_2)\rtimes\bZ_2$ & $8$ & \quad[18,3] & $\bZ_3\times S_3$ & $2$ \\[0.5ex]
\hline
\quad[16,4] & $\bZ_4\rtimes\bZ_4$ & $5$ & \quad[20,3] & $\bZ_5\rtimes\bZ_4$ & $2$ \\[0.5ex]
\hline
\end{tabular}

\begin{bibdiv}
\begin{biblist}

\bib{AOP02}{article}{
  author={Ara, P.},
  author={O'Meara, K.},
  author={Perera, F.},
  title={Stable finiteness of group rings in arbitrary characteristic},
  journal={Adv. Math.},
  volume={170},
  year={2002},
  pages={224--238}
}

\bib{CD}{article}{
  author={Collins, Beno\^it},
  author={Dykema, Ken},
  title={Free products of sofic groups with amalgamation over monotileably amenable groups},
  journal={M\"untser J. Math.},
  volume={4},
  year={2011},
  pages={101--118}
}

\bib{C63}{article}{
   author={Connell, Ian G.},
   title={On the group ring},
   journal={Canad. J. Math.},
   volume={15},
   date={1963},
   pages={650--685},
}

\bib{ES04}{article}{
  author={Elek, G\'abor},
  author={Szab\'o, Endre},
  title={Sofic groups and direct finiteness},
  journal={J. Algebra},
  year={2004},
  volume={280},
  pages={426--434}
}

\bib{ES06}{article}{
  author={Elek, G\'abor},
  author={Szab\'o, Endre},
  title={On sofic groups},
  journal={J. Group Theory},
  volume={9},
  year={2006},
  pages={161--171}
}

\bib{ES11}{article}{
  author={Elek, G\'abor},
  author={Szab\'o, Endre},
  title={Sofic representations of amenable groups},
  journal={Proc. Amer. Math. Soc.},
  volume={139},
  year={2011},
  pages={4285--4291}
}

\bib{GAP}{article}{
  author={The~GAP~Group},
  title={GAP --- Groups, Algorithms, and Programming, Version 4.4.12},
  year={2008},
  eprint={http://www.gap-system.org}
}

\bib{Go73}{article}{
  author={Gottschalk, Walter},
   title={Some general dynamical notions},
   conference={
     title={Recent advances in topological dynamics},
     date={1972}
   },
   book={
     series={Lecture Notes in Mathematics}, 
     volume={318},
     publisher={Springer},
     date={1973},
   },
   pages={120--125},
}

\bib{G99}{article}{
  author={Gromov, Mikael},
  title={Endomorphisms of symbolic algebraic varieties},
  journal={J. Eur. Math. Soc.},
  year={1999},
  volume={1},
  pages={109--197}
}

\bib{K}{book}{
   author={Kaplansky, Irving},
   title={Fields and rings},
   publisher={The University of Chicago Press},
   date={1969}
}

\bib{M}{article}{
  author={Mikhailov, Roman},
  title={Fantasies on zero divisors (Russian)},
  note={\url{http://www.mi.ras.ru/~romanvm/zero-divisors.pdf}},    
}

\bib{Pa77}{book}{
   author={Passman, Donald S.},
   title={The algebraic structure of group rings},
   series={Pure and Applied Mathematics},
   publisher={Wiley-Interscience [John Wiley \& Sons]},
   place={New York},
   date={1977}
}

\bib{P11}{article}{
  author={Paunescu, Liviu},
  title={On sofic actions and equivalence relations},
  journal={J. Funct. Anal.},
  volume={261},
  year={2011},
  pages={2461--2485}
}

\bib{Schw}{article}{
  author={Schweitzer, Pascal},
  title={On zero divisors with small support in group rings of torsion--free groups},
  eprint={http://arxiv.org/abs/1202.6645}
}

\bib{T}{misc}{    
    title={Zero divisor conjecture and idempotent conjecture},    
    author={Andreas Thom\phantom{x}(mathoverflow.net/users/8176)},    
    note={\url{http://mathoverflow.net/questions/34616} (version: 2010-08-05)},    
    eprint={http://mathoverflow.net/questions/34616},    
    organization={MathOverflow},  
}

\bib{T2}{misc}{    
    title={Properties of a non-sofic group},    
    author={Andreas Thom\phantom{x}(mathoverflow.net/users/8176)},    
    note={\url{http://mathoverflow.net/questions/43829} (version: 2010-10-28)},    
    eprint={http://mathoverflow.net/questions/43829},    
    organization={MathOverflow},  
}

\bib{V02}{book}{
   author={Valette, Alain},
   title={Introduction to the Baum-Connes conjecture},
   series={Lectures in Mathematics ETH Z\"urich},
   note={From notes taken by Indira Chatterji;
   With an appendix by Guido Mislin},
   publisher={Birkh\"auser Verlag},
   place={Basel},
   date={2002}
}

\end{biblist}
\end{bibdiv}

\end{document}